\newcommand{\G} {\mathcal{G}}
\newcommand{\T} {\mathcal{T}}
\newcommand{\I} {\mathcal{I}}
\newcommand{\J} {\mathcal{J}}
\newcommand{\D} {\mathcal{D}}
\newcommand{\F} {\mathcal{F}}
\newcommand{\Z} {\mathbb{Z}}
\newcommand{\N} {\mathbb{N}}
\newcommand{\Sig} {\mathfrak{S}}
\theoremstyle{plain}
\newtheorem{theorem}{Theorem}[section]
\newtheorem{lemma}[theorem]{Lemma}
\newtheorem{corollary}[theorem]{Corollary}
\newtheorem{proposition}[theorem]{Proposition}
\newtheorem{fact}{Fact}[section]
\theoremstyle{definition}
\newtheorem{definition}{Definition}[section]
\newtheorem{example}{Example}[section]
\theoremstyle{remark}
\newtheorem{remark}{Remark}[section]
\renewenvironment{proof}{\noindent{\bf Proof.}}{\qed}
\author{Ange Bigeni}
\address{Institut Camille Jordan, Universit\'e Claude Benard Lyon 1 (France)
}
\email{bigeni@math.univ-lyon1.fr}
\title[Combinatorial study of the Dellac configurations]{Combinatorial study of the Dellac configurations and the $q$-extended normalized median Genocchi numbers}
\date{}
\begin{document}

\maketitle


\begin{abstract}
In two recent papers (\textit{Mathematical Research Letters,18(6):1163--1178,2011} and \textit{European J. Combin.,33(8):1913--1918,2012}), Feigin proved that the Poincaré polynomials of the degenerate flag varieties have a combinatorial interpretation through the Dellac configurations, and related them to the $q$-extended normalized median Genocchi numbers $\bar{c}_n(q)$ introduced by Han and Zeng, mainly by geometric considerations.
In this paper, we give combinatorial proofs of these results by constructing statistic-preserving bijections between the Dellac configurations and two other combinatorial models of $\bar{c}_n(q)$.

  \bigskip\noindent \textbf{Keywords:} Genocchi numbers; Dumont permutations; Dellac configurations; Dellac histories
\end{abstract}


\section{Introduction}

The Genocchi numbers $(G_{2n})_{n \geq 1} = (\textcolor{red}{1},\textcolor{red}{1},\textcolor{red}{3},\textcolor{red}{17},\textcolor{red}{155},\hdots)$ and the median Genocchi numbers $(H_{2n+1})_{n \geq 0} = (\textcolor{blue}{1},\textcolor{blue}{2},\textcolor{blue}{8},\textcolor{blue}{56},\textcolor{blue}{608},\hdots)$ are the entries $g_{2n-1,n}$ and $g_{2n+2,1}$ respectively in the Seider triangle $(g_{i,j})_{0 \leq j \leq i}$ (see Figure \ref{seidertriangle}) defined by 
\begin{align*}
g_{2p-1,j} &= g_{2p-1,j-1} + g_{2p-2,j},\\
g_{2p,j} &= g_{2p-1,j} + g_{2p,j+1},
\end{align*}
with $g_{1,1} = 1$ and $g_{i,j} = 0$ whenever $i < j$ or $j=0$ (see \cite{DV}).

\begin{figure}[!h] \center
\begin{tabular}{c|cccccccccccccccccccccc}
$\vdots$ & & & & & & & & & & & & & & & & & \\
5 & & & & & & & & & & & & & & & & & \textcolor{red}{155} & $\rightarrow$ & 155 & $\rightarrow$ & \reflectbox{$\ddots$} \\ 
 & & & & & & & & & & & & & & & & & $\uparrow$ & & $\downarrow$ \\
4 & & & & & & & & & & & & & \textcolor{red}{17} & $\rightarrow$ & 17 & $\rightarrow$ & 155 & $\rightarrow$ & 310 & $\rightarrow$ & $\hdots$ \\ 
 & & & & & & & & & & & & & $\uparrow$ & & $\downarrow$ & & $\uparrow$ & & $\downarrow$ \\
3 & & & & & & & & & \textcolor{red}{3} & $\rightarrow$ & 3 & $\rightarrow$ & 17 & $\rightarrow$ & 34 & $\rightarrow$ & 138 & $\rightarrow$ & 448 & $\rightarrow$ &  $\hdots$ \\ 
  & & & & & & & & & $\uparrow$ & & $\downarrow$ & & $\uparrow$ & & $\downarrow$ & & $\uparrow$ & & $\downarrow$ \\
2 & & & & & \textcolor{red}{1} & $\rightarrow$ & 1 & $\rightarrow$ & 3 & $\rightarrow$ & 6 & $\rightarrow$ & 14 & $\rightarrow$ & 48 & $\rightarrow$ & 104 & $\rightarrow$ & 552 & $\rightarrow$ &  $\hdots$ \\ 
 & & & & & $\uparrow$ & & $\downarrow$ & & $\uparrow$ & & $\downarrow$ & & $\uparrow$ & & $\downarrow$ & & $\uparrow$ & & $\downarrow$ \\
1  & \textcolor{red}{1} & $\rightarrow$ & \textcolor{blue}{1} & $\rightarrow$ & 1 & $\rightarrow$ & \textcolor{blue}{2} & $\rightarrow$ & 2 & $\rightarrow$ & \textcolor{blue}{8} & $\rightarrow$ & 8 & $\rightarrow$ & \textcolor{blue}{56} & $\rightarrow$ & 56 & $\rightarrow$ & \textcolor{blue}{608} & $\rightarrow$ & $\hdots$ \\ 
\hline
$j/i$ & 1 & & 2 & & 3 & & 4 & & 5 & & 6 & & 7 & & 8 & & 9 & & 10& & $\hdots$
\end{tabular}
\caption{Seider generation of the Genocchi numbers.}
\label{seidertriangle}
\end{figure}
\hspace*{-5.9mm} 
It is well known that $H_{2n+1}$ is divisible by $2^n$ (see \cite{Barsky}) for all $n \geq 0$. The \textit{normalized median Genocchi numbers} $(h_n)_{n \geq 0} = (1,1,2,7,38,\hdots)$ are the positive integers defined by 
$$h_n = H_{2n+1}/2^n.$$
Dumont \cite{Dumont2} gave several combinatorial models of the Genocchi numbers and the median Genocchi numbers, among which are the \textit{Dumont permutations}.
We denote by $\Sig_n$ the set of permutations of the set $[n] := \{1,2,\hdots,n\}$, and by inv$(\sigma)$ the number of inversions of a permutation $\sigma \in \Sig_n$, \textit{i.e.}, the quantity of pairs $(i,j) \in  [n]^2$ with $i < j$ and $\sigma(i)>\sigma(j)$. Broadly speaking, the number of inversions inv$(w)$ of a word $w = l_1 l_2 \hdots l_n$ with $n$ letters in the alphabet $\N$ is the quantity of pairs $(i,j) \in [n]^2$ such that $i < j$ and $l_i > l_j$. In particular, the number inv$(\sigma)$ associated with a permutation $\sigma \in \Sig_n$ is the quantity inv$(w)$ associated with the word $w = \sigma(1) \sigma(2) \hdots \sigma(n)$.

\begin{definition} A \text{Dumont permutation} of order $2n$ is a permutation $\sigma \in~\Sig_{2n}$ such that $\sigma(2i) < 2i $ and $\sigma(2i-1) > 2i-1$ for all $i$. We denote by $\D_n$ the set of these permutations.
\end{definition}

It is well-known (see \cite{Dumont2}) that $H_{2n+1} = |\D_{n+1}|$ for all $n \geq 0$.
In \cite{HZ2}, Han and Zeng introduced the set $\G_n''$ of \textit{normalized Genocchi permutations}, which consist of permutations $\sigma \in \D_n$ such that for all $j \in [n-1]$, the two integers $\sigma^{-1}(2j)$ and $\sigma^{-1}(2j+1)$ have the same parity if and only if $\sigma^{-1}(2j) < \sigma^{-1}(2j+1)$, and they proved that $h_{n} = |\G_{n+1}''|$ for all $n \geq 0$.
The number $h_n$ also counts the Dellac configurations of size $n$ (see \cite{Feigin2}).

\begin{definition} \label{definitiondellacconfiguration} A Dellac configuration of size $n$ is a tableau of width $n$ and height $2n$ which contains $2n$ dots between the lines $y=x$ and $y=n+x$, such that each row contains exactly one dot and each column contains exactly two dots. Let $DC(n)$ be the set of Dellac configurations of size $n$. An \textit{inversion} of $C \in DC(n)$ is a pair $(d_1,d_2)$ of dots whose Cartesian coordinates in $C$ are respectively $(j_1,i_1)$ and $(j_2,i_2)$ such that $j_1 < j_2$ and $i_1 > i_2$. We denote by $\text{inv}(C)$ the number of inversions of $C$. For example, the tableau depicted in Figure \ref{exempledellac} is a Dellac configuration $C \in DC(3)$ with $\text{\text{inv}}(C) = 2$ inversions (represented by two segments).

\begin{figure}[!h] \centering \label{exempledellac}
\includegraphics[width=1.5cm]{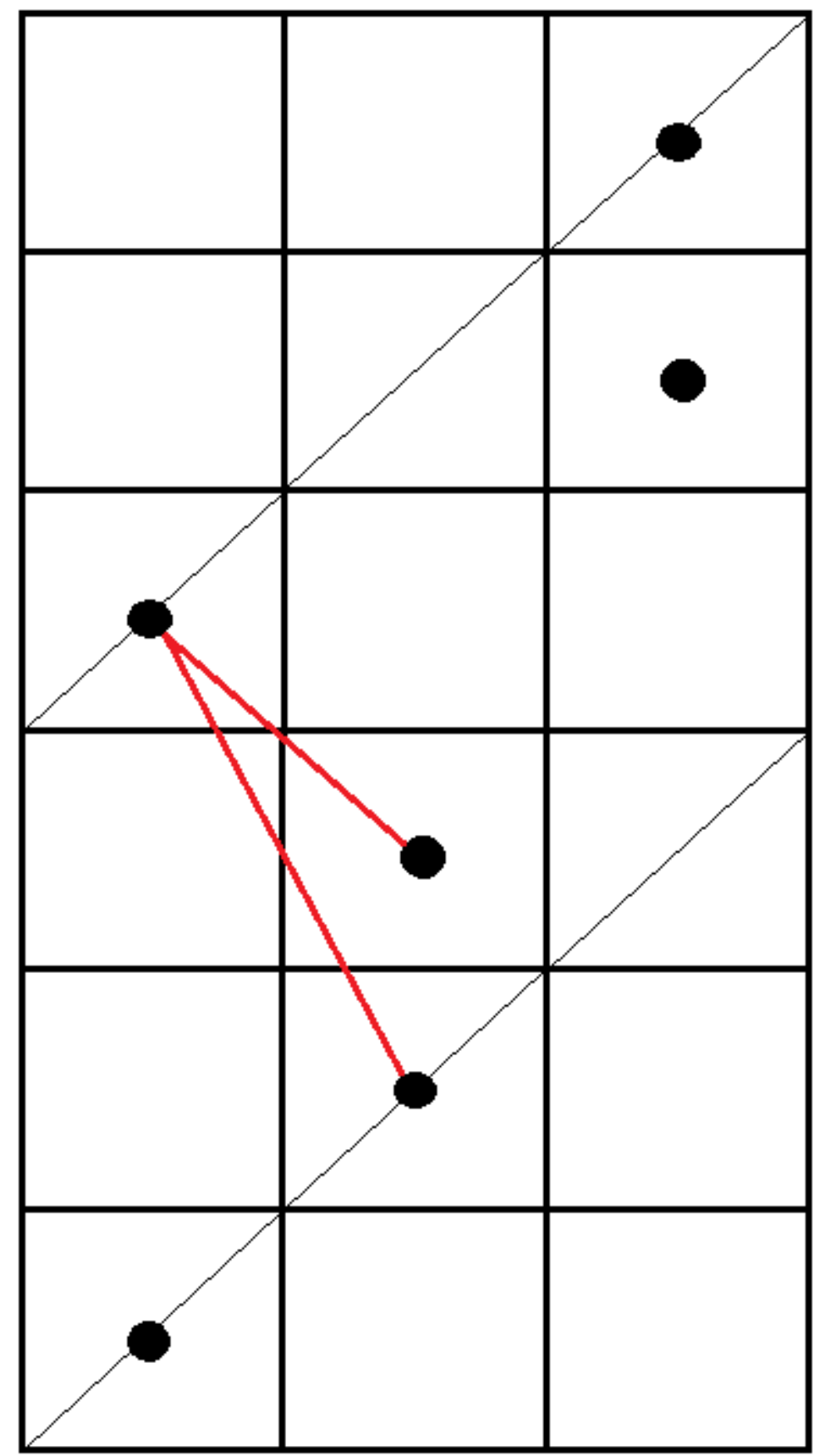}
\caption{Dellac configuration $C \in DC(3)$ with inv$(C) = 2$ inversions.}
\label{exempledellac}
\end{figure}
\end{definition}

In \cite{HZ2,HZ1}, Han and Zeng defined the \textit{$q$-Gandhi polynomials of the second kind} $(C_n(x,q))_{n \geq 1}$ by
$C_1(x,q) = 1$ and $C_{n+1}(x,q) = (1+qx) \Delta_q(xC_n(x,q))$, where 
$$\Delta_q P(x)=(P(1+qx) - P(x))/(1+qx -x)$$
for all polynomial $P(x)$.
They proved that the polynomials $C_n(1,q)$ are $q$-analogs of the median Genocchi numbers $(C_n(1,1) = H_{2n-1})$. Furthermore, they gave a combinatorial interpretation of $C_n(1,q)$ through $\D_n$.

\begin{theorem}[Han and Zeng, 1997] \label{theointerpretationcombinatoireCn1q}
Let $n \geq 1$. For all $\sigma \in \D_n$, we define $st(\sigma)$ as the quantity
\begin{equation} \label{definitionst}
st(\sigma) = n^2 -  \sum_{i=1}^n \sigma(2i) - inv(\sigma^o) - inv(\sigma^e)
\end{equation}
where $\sigma^o$ and $\sigma^e$ are the two words $\sigma(1) \sigma(3) \hdots \sigma(2n-~1)$ and $\sigma(2) \sigma(4) \hdots \sigma(2n)$ respectively. Then, the polynomial $C_n(1,q)$ has the following combinatorial interpretation:
\begin{equation} \label{formuleCn}
C_n(1,q) = \sum_{\sigma \in \D_n} q^{st(\sigma)}.
\end{equation}
\end{theorem}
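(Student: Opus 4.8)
\medskip\noindent\emph{Proof idea.} The plan is to prove the identity by induction on $n$ using the defining recurrence $C_{n+1}(x,q) = (1+qx)\Delta_q(xC_n(x,q))$. To make the induction carry the variable $x$, I would first promote the generating function to a bivariate polynomial by introducing an auxiliary statistic $p : \D_n \to \{0,1,\dots,n-1\}$, the intended $x$-degree, and setting
$$\tilde C_n(x,q) = \sum_{\sigma \in \D_n} x^{p(\sigma)} q^{st(\sigma)}.$$
The goal then becomes the stronger claim $\tilde C_n(x,q) = C_n(x,q)$, which specializes at $x=1$ to the desired formula. The base case $n=1$ is immediate: $\D_1 = \{21\}$, and a direct computation gives $p(21)=0$ and $st(21)=0$, so $\tilde C_1 = 1 = C_1$.

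For the inductive step I would read the operator $\Delta_q$ combinatorially. Expanding the finite difference on a monomial gives $\Delta_q(x^{k+1}) = \sum_{j=0}^{k}(1+qx)^{j} x^{k-j}$, hence
$$(1+qx)\,\Delta_q(x\cdot x^{k}) \;=\; \sum_{j=1}^{k+1}(1+qx)^{j}\,x^{\,k+1-j} \;=\; \sum_{j=1}^{k+1}\sum_{l=0}^{j}\binom{j}{l}\,q^{l}\,x^{\,k+1-j+l}.$$
Thus, assuming $\tilde C_n = C_n$ and writing $k=p(\sigma)$, each $\sigma \in \D_n$ should spawn a family of permutations $\tau \in \D_{n+1}$ indexed by a choice of $j \in \{1,\dots,p(\sigma)+1\}$ and of an $l$-subset of $[j]$, in such a way that $st(\tau) = st(\sigma)+l$ and $p(\tau) = p(\sigma)+1-j+l$. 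The heart of the proof is therefore to construct an explicit insertion map realizing exactly this indexing, and to reverse-engineer the definition of $p$ so that the slot count below comes out right.

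The insertion is heavily constrained, which is encouraging. In any $\tau \in \D_{n+1}$ the condition $\tau(2n+1) > 2n+1$ forces $\tau(2n+1) = 2n+2$, so the only freedom lies in the placement of the value $2n+1$: it is either set directly at the new even position $2n+2$, or inserted at one of the admissible odd positions $1,3,\dots,2n-1$ of $\sigma$, in which case the entry it displaces is pushed to position $2n+2$ (after the usual order-preserving standardization of the remaining values). I would show that the number of admissible slots is exactly $p(\sigma)+1$ (this is precisely what pins down $p$), that each slot $j$ produces a base configuration, and that the remaining binary crossing/no-crossing choices realize the $\binom{j}{l}$ ways of creating $l$ new inversions shared between the odd subword $\tau^o$ and the even subword $\tau^e$.

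The main obstacle is the final statistic count: one must track $\sum \tau(2i)$, $inv(\tau^o)$ and $inv(\tau^e)$ simultaneously under the insertion and verify that, after absorbing the shift $n^2 \mapsto (n+1)^2$ and the new summand $\tau(2n+2)$, the net change in $st$ collapses to the single parameter $l$, while $p$ changes by $1-j+l$. The hard part will be to prove that the crossing choices are genuinely independent, so that the binomial coefficient $\binom{j}{l}$ (rather than some correlated weight) appears, and that the triples $(\sigma,j,l)$ produce pairwise distinct $\tau$ exhausting $\D_{n+1}$. Once this bijective bookkeeping is established, the recurrence for $\tilde C_n$ matches that for $C_n$ term by term, the induction closes, and setting $x=1$ yields formula \eqref{formuleCn}.
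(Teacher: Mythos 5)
You should first be aware that the paper contains no proof of this statement: Theorem \ref{theointerpretationcombinatoireCn1q} is imported verbatim from Han and Zeng (the references \cite{HZ1,HZ2}), and the paper only uses it as input to Proposition \ref{Cnpolynomegenerateur}. So there is no internal proof to compare against, and your proposal has to stand on its own. Its overall strategy is the natural (and historically correct) one: induct on $n$ through the recurrence $C_{n+1}(x,q)=(1+qx)\Delta_q(xC_n(x,q))$ after refining the generating function by an $x$-statistic on $\D_n$, and your algebraic expansion $(1+qx)\Delta_q(x\cdot x^{k})=\sum_{j=1}^{k+1}(1+qx)^{j}x^{k+1-j}$ is correct.

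Nevertheless, what you have written is a plan rather than a proof, and it contains one claim that is genuinely inconsistent with your own algebra. The deferred items are exactly the hard ones: the statistic $p$ is never defined (you propose to ``reverse-engineer'' it), the insertion map is never constructed, and the verification that $st$ increases by exactly $l$ while $p$ becomes $p(\sigma)+1-j+l$ is acknowledged as an obstacle rather than carried out. More seriously, your structural claim that ``the only freedom lies in the placement of the value $2n+1$'' cannot be right. Under that description, two permutations $\tau\in\D_{n+1}$ lying over the same $\sigma\in\D_n$ could differ only in the position of the value $2n+1$ and in the entry pushed to position $2n+2$, so each fiber of the reduction $\D_{n+1}\to\D_n$ would have at most $n+1$ elements. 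But the expansion you wrote forces the fiber over $\sigma$ to contain $\sum_{j=1}^{p(\sigma)+1}2^{j}=2^{p(\sigma)+2}-2$ permutations, which already exceeds $n+1$ for small $n$: for instance $|\D_4|=56$ over $|\D_3|=8$ requires fibers of size $14$, while the value $7$ has at most $4$ admissible placements. (Relatedly, the naive deletion map is not even well defined: the entry displaced back from position $2n+2$ may violate the Dumont condition $\sigma(2i-1)>2i-1$.) The correct insertion must therefore modify \emph{several} entries of $\sigma$ -- a cascade in which each of the $j$ ``active'' slots independently contributes a binary choice, producing the $2^{j}$ permutations that the factor $(1+qx)^{j}$ demands -- and constructing that cascade, identifying $p$, and proving the bijectivity and the statistic bookkeeping is precisely the substance of Han and Zeng's proof, none of which is present in the proposal.
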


By introducing the subset $\G_n'' \subset \D_n$ of normalized Genocchi permutations and using the combinatorial interpretation provided by Theorem \ref{theointerpretationcombinatoireCn1q}, Han and Zeng proved combinatorially that the polynomial $(1+q)^{n-1}$ divides $C_n(1,q)$, which gives birth to polynomials $(\bar{c}_n(q))_{n \geq 1}$ defined by
\begin{equation} \label{definitioncnq}
\bar{c}_n(q) = C_n(1,q)/(1+q)^{n-1}.
\end{equation} 
This divisibility had previously been proved in the same paper with a continued fraction approach, as a corollary of the following theorem and a well-known result on continued fractions (see \cite{Flajolet}).

\begin{theorem}[Han and Zeng, 1997] \label{theogeneratingfunctioncn}
The generating function of the sequence $(\bar{c}_{n+1}(q))_{n \geq 0}$ is 
\begin{equation} \label{expansioncn+1}
\sum_{n \geq 0} \bar{c}_{n+1}(q) t^n = \dfrac{1}{1- \dfrac{\lambda_1 t}{1- \dfrac{\lambda_2 t}{1-\dfrac{\lambda_3 t}{\ddots}}}}
\end{equation}
where the sequence $(\lambda_n)_{n \geq 1}$ is defined by $\lambda_{2p-1} = (1-q^{p+1})(1-q^p)/(1-q^2)(1-q)$ and $\lambda_{2p} =~q \lambda_{2p-1}$ for all $p \geq 1$.
\end{theorem}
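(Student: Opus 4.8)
The plan is to read the continued fraction off Flajolet's combinatorial theory of continued fractions \cite{Flajolet}, after peeling away the normalization factor. Writing $\tilde F(s) = \sum_{n\geq0} C_{n+1}(1,q)s^n$ for the unnormalized generating series, the definition \eqref{definitioncnq} gives $\sum_{n\geq0}\bar{c}_{n+1}(q)t^n = \tilde F\!\left(t/(1+q)\right)$. Hence, once $\tilde F$ is expanded as a Stieltjes fraction of the shape \eqref{expansioncn+1} with coefficients $\tilde\lambda_k$, the substitution $s = t/(1+q)$ divides every coefficient by $1+q$ and produces the normalized series with coefficients $\tilde\lambda_k/(1+q)$. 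A direct computation shows that the target values satisfy $(1+q)\lambda_{2p-1} = (1-q^{p+1})(1-q^p)/(1-q)^2$ and $(1+q)\lambda_{2p} = q(1-q^{p+1})(1-q^p)/(1-q)^2$, i.e. (up to the factor $q$) products of two consecutive $q$-integers; so it suffices to prove that $\tilde F$ is the Stieltjes fraction with these coefficients $\tilde\lambda_k = (1+q)\lambda_k$, and then rescale.

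To expand $\tilde F$ I would present its coefficients as weighted Dyck paths. By Flajolet's fundamental lemma, the coefficient of $s^n$ in the Stieltjes fraction determined by $(\tilde\lambda_k)$ is the generating polynomial of Dyck paths of semilength $n$ in which each rise from height $h-1$ to height $h$ is weighted by $\tilde\lambda_h$ (there being $n$ rises, each contributing one factor). On the combinatorial side, the interpretation \eqref{formuleCn} of Theorem \ref{theointerpretationcombinatoireCn1q} gives $C_{n+1}(1,q) = \sum_{\sigma\in\D_{n+1}}q^{st(\sigma)}$. The heart of the proof is thus a statistic-preserving bijection between $\D_{n+1}$ and these weighted Dyck paths: one encodes each Dumont permutation by a step-by-step insertion (a \emph{history}) whose moves are rises and falls, with a local label recording the placement choice at that step; the labels available when the path reaches height $h$ should be enumerated, weighted by their contribution to $st$, precisely by the polynomial $\tilde\lambda_h$.

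Fixing the underlying path shape and summing $q^{st(\sigma)}$ over all the label choices then replaces each rise to height $h$ by the factor $\tilde\lambda_h$, yielding $\sum_{\sigma\in\D_{n+1}}q^{st(\sigma)} = \sum_{P}\prod_{\text{rises of }P}\tilde\lambda_h$ term by term; combined with the rescaling of the first paragraph this is exactly \eqref{expansioncn+1}. As a byproduct the divisibility of $C_{n+1}(1,q)$ by $(1+q)^n$ drops out, since each of the $n$ rise-weights $\tilde\lambda_h$ is a multiple of $1+q$.

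The principal obstacle is building and validating the history: one must design an insertion on $\D_{n+1}$ whose admissible moves are dictated by the Dumont constraints $\sigma(2i)<2i$ and $\sigma(2i-1)>2i-1$, and then verify that the rigid statistic $st(\sigma) = n^2 - \sum_i\sigma(2i) - \text{inv}(\sigma^o) - \text{inv}(\sigma^e)$ of \eqref{definitionst} decomposes additively along the steps, with the local generating polynomial at height $h$ equal to $\tilde\lambda_h$ on the nose rather than up to a stray power of $q$. Because $st$ couples a global term $\sum_i\sigma(2i)$ with the inversion numbers of the two extracted subwords $\sigma^o$ and $\sigma^e$, keeping exact track of how each insertion perturbs all three quantities is the delicate part. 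An approach that bypasses the explicit bijection is to turn the Gandhi recurrence into the functional equation $y(x,t) = 1 + t(1+qx)\,\Delta_q\!\big(x\,y(x,t)\big)$ for $y(x,t)=\sum_{n\geq0}C_{n+1}(x,q)t^n$ and to solve it as a continued fraction in the auxiliary variable $x$; there the work shifts to iterating the operator $\Delta_q$ to extract the successive coefficients.
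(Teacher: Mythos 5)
Your preparatory reductions are sound: the substitution $s=t/(1+q)$ correctly converts the claimed expansion into the statement that $\tilde F(s)=\sum_{n\geq 0}C_{n+1}(1,q)s^n$ is the Stieltjes fraction with coefficients $\tilde\lambda_k=(1+q)\lambda_k$; the computation $(1+q)\lambda_{2p-1}=(1-q^{p+1})(1-q^p)/(1-q)^2$ is right; and Flajolet's fundamental lemma does identify the coefficients of such a fraction with Dyck paths weighted by the $\tilde\lambda_h$. But the proof stops exactly where the theorem begins. The entire content of the statement is the step you defer: an insertion encoding of Dumont permutations $\sigma\in\D_{n+1}$ by Dyck paths with local labels, together with a verification that $st(\sigma)=n^2-\sum_i\sigma(2i)-\mathrm{inv}(\sigma^o)-\mathrm{inv}(\sigma^e)$ splits additively over the steps and that the labels available at height $h$ generate exactly $\tilde\lambda_h$, with no stray power of $q$ and no boundary corrections. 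You name this as ``the principal obstacle'' and ``the delicate part,'' but you never define the admissible moves, never specify the label set, and never do the bookkeeping of how one insertion simultaneously perturbs $\sum_i\sigma(2i)$, $\mathrm{inv}(\sigma^o)$ and $\mathrm{inv}(\sigma^e)$. The alternative you mention (solving $y=1+t(1+qx)\,\Delta_q(xy)$ as a continued fraction in $x$) is likewise only named, not executed. So what you have is a correct strategy outline in which the central lemma is asserted rather than proved.

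For comparison: the paper itself does not reprove this theorem --- it is imported from Han and Zeng \cite{HZ2} --- but the paper's own machinery supplies precisely the construction you are missing, carried out at the level of the normalized objects rather than of $\D_{n+1}$. Proposition \ref{Cnpolynomegenerateur} passes from $\D_{n+1}$ to $\D_{n+1}'$ by a free group action (absorbing the factor $(1+q)^n$ cleanly, rather than recovering divisibility as a byproduct), Theorem \ref{bijectionconfigdumont} transports the statistic $st$ to Dellac configurations, and Section \ref{sec:dellacdyck} then builds exactly the kind of history your plan requires: Definition \ref{definitionphiC} gives the explicit path-plus-labels map $\Phi:DC(n)\rightarrow DH(n)$, Proposition \ref{configstory} checks that the labels satisfy the required inequalities, Lemma \ref{egalitesumlambda} and Proposition \ref{sumstoriesweighteddyckpath} verify that summing over the labels reproduces the weights $\lambda_h$ on the nose, and the inverse algorithm $\Psi$ of Definition \ref{defPsiS} establishes bijectivity. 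Each of these steps has a substantial proof; their analogues for your proposed unnormalized insertion on $\D_{n+1}$ are exactly what your proposal still owes.
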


The polynomials $(\bar{c}_n(q))_{n \geq 1}$ are $q$-refinements of normalized median Genocchi numbers: $\bar{c}_n(1) = h_{n-1}$ for all $n \geq 1$. They are named \textit{$q$-extended normalized median Genocchi numbers}. In \S \ref{sec:preliminaries}, we give a combinatorial interpretation of $\bar{c}_n(q)$ by slightly adjusting the definition of normalized Genocchi permutations.
In \cite{Feigin2,Feigin}, Feigin introduced a $q$-analog of the normalized median Genocchi number $h_n$ with the Poincaré polynomial $P_{\F_n^a}(q)$ of the degenate flag variety $\F_n^a$ (whose Euler characteristic is $P_{\F_n^a}(1) = h_n$), and gave a combinatorial interpretation of $P_{\F_n^a}(q)$ through Dellac configurations.

\begin{theorem}[Feigin, 2012] \label{theoreminterpretationcombinatoiredepoincare}
For all $n \geq 0$, the polynomial $P_{\F_n^a}(q)$ is generated by $DC(n)$:
$$P_{\F_n^a}(q) = \sum_{C \in DC(n)} q^{2\text{inv}(C)}.$$
\end{theorem}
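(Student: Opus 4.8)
The plan is to separate the geometric and the combinatorial content. Feigin's study of the degenerate flag variety $\F_n^a$ identifies its Poincaré polynomial with a $q$-extended median Genocchi number in the form $P_{\F_n^a}(q) = \bar{c}_{n+1}(q^2)$; I take this identification as the (external) geometric input, since no purely combinatorial argument can otherwise reach the topological quantity $P_{\F_n^a}(q)$. Writing $f(t) := \sum_{C \in DC(n)} t^{\,\mathrm{inv}(C)}$, the right-hand side of the theorem is $\sum_{C} q^{2\,\mathrm{inv}(C)} = f(q^2)$, a polynomial in $q^2$. Comparing with $P_{\F_n^a}(q) = \bar{c}_{n+1}(q^2)$, the theorem becomes equivalent, after the substitution $q^2 \mapsto t$, to the purely combinatorial identity
\[ \sum_{C \in DC(n)} t^{\,\mathrm{inv}(C)} \;=\; \bar{c}_{n+1}(t). \qquad (\star) \]
Everything then reduces to proving $(\star)$ by bijective means, and this is where the Dellac configurations do the work.

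To obtain a target description of $\bar{c}_{n+1}(t)$ suited to a bijection, I would feed the continued fraction of Theorem \ref{theogeneratingfunctioncn} into Flajolet's continued-fraction/lattice-path correspondence: $\bar{c}_{n+1}(t)$ is then the generating polynomial of Dyck paths of semilength $n$ in which a down-step from height $k$ to $k-1$ carries the weight $\lambda_k$ of that theorem. Since $\lambda_{2p-1} = [p]_t[p+1]_t/[2]_t$ and $\lambda_{2p} = t\,\lambda_{2p-1}$ are explicit polynomials in $t$ with nonnegative integer coefficients, refining each weighted down-step into the monomials of its weight turns these paths into decorated objects, the \emph{Dellac histories}, so that $\bar{c}_{n+1}(t) = \sum_H t^{\,w(H)}$ for a nonnegative integer statistic $w$.

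The core step is a weight-preserving bijection $\Phi \colon DC(n) \to \{\text{Dellac histories}\}$ with $\mathrm{inv}(C) = w(\Phi(C))$. I would build $\Phi$ by scanning the $2n$ rows of $C$ from bottom to top and reading the first dot encountered in a column as an up-step and its second dot as a down-step; this produces a path of $n$ up- and $n$ down-steps, and the admissibility condition (each dot lying between $y=x$ and $y=n+x$) is exactly what forces the path to stay nonnegative and return to $0$, i.e.\ to be a Dyck path, with the height before a row recording the number of columns already carrying one dot. The decoration of each down-step is then read off from the relative order of the column indices of the currently open columns, and the crux is to check that the inversions of $C$ distribute additively over the down-steps, the local contribution at a down-step from height $k$ ranging exactly over the monomials of $\lambda_k$. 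Granting the bijection, $f(t) = \sum_C t^{\,\mathrm{inv}(C)} = \sum_H t^{\,w(H)} = \bar{c}_{n+1}(t)$, which is $(\star)$, and combined with $P_{\F_n^a}(q) = \bar{c}_{n+1}(q^2)$ this yields the theorem.

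The main obstacle is precisely this last bookkeeping: one must verify simultaneously that $\Phi$ surjects onto all admissible decorated paths and that $\mathrm{inv}$ is transported to $w$, the delicate point being to match the local inversion count at a closing column with the $t$-binomial weight $[p]_t[p+1]_t/[2]_t$ rather than with a naive interval of exponents. The parity split between $\lambda_{2p-1}$ and $\lambda_{2p}=t\,\lambda_{2p-1}$ reflects a genuine dichotomy at each closing column (roughly, the relative vertical position of the two dots within the admissible range) that has to be located inside the configuration. As a cross-check and an alternative route, one can instead compose a bijection $DC(n) \to \G_{n+1}''$ onto (adjusted) normalized Genocchi permutations with Han--Zeng's Theorem \ref{theointerpretationcombinatoireCn1q}, using $\bar{c}_{n+1}(t) = C_{n+1}(1,t)/(1+t)^n$ and accounting for the factor $(1+t)^n$ through the $2^n$ expansions of a configuration into a Dumont permutation; showing that each column contributes the clean factor $(1+t)$ is the analogue of the obstacle above.
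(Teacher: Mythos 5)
There is a genuine error at the first step of your reduction, and it propagates through the whole plan. Both identities you rely on --- the ``geometric input'' $P_{\F_n^a}(q) = \bar{c}_{n+1}(q^2)$ and the combinatorial identity $(\star)\colon \sum_{C} t^{\mathrm{inv}(C)} = \bar{c}_{n+1}(t)$ --- are false, because the distribution of $\mathrm{inv}$ on $DC(n)$ is \emph{not} palindromic. Take $n=3$: enumerating the seven configurations of $DC(3)$ gives
\begin{equation*}
\sum_{C \in DC(3)} t^{\mathrm{inv}(C)} = 1 + 2t + 3t^{2} + t^{3},
\end{equation*}
whereas the continued fraction of Theorem \ref{theogeneratingfunctioncn} (with $\lambda_1 = 1$, $\lambda_2 = q$, $\lambda_3 = 1+q+q^2$) gives $\bar{c}_4(t) = 1 + 3t + 2t^{2} + t^{3}$. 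The correct statements involve the \emph{complementary} statistic: by (\ref{definitionhnq}) and Corollary \ref{theohncn} one has $\bar{c}_{n+1}(t) = \sum_{C} t^{\binom{n}{2} - \mathrm{inv}(C)}$, equivalently $P_{\F_n^a}(q) = q^{n(n-1)}\,\bar{c}_{n+1}(q^{-2})$; your versions would coincide with these only if $\bar{c}_{n+1}$ were palindromic, which it is not. Consequently no bijection can transport $\mathrm{inv}$ to the Dellac-history weight $w$ as you require: the paper's bijection $\Phi$ of Theorem \ref{bijectiondellacstory} satisfies $\omega(\Phi(C)) = q^{\binom{n}{2} - \mathrm{inv}(C)}$, and this complementation is essential, not a bookkeeping convention. (A smaller inaccuracy: in your row-scan construction, nonnegativity of the path is automatic for any tableau with two dots per column --- a column cannot close before it opens --- so the admissibility condition $y=x \leq \cdot \leq y=n+x$ is not ``exactly'' what forces the Dyck property; it governs which decorated paths occur.)

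There is also a structural point to flag: the paper never proves this theorem. It is imported from Feigin's work, where it is established geometrically by a cell decomposition of $\F_n^a$ into affine cells indexed by Dellac configurations, the cell of $C$ having complex dimension $\mathrm{inv}(C)$; everything the present paper proves (Theorems \ref{bijectionconfigdumont} and \ref{bijectiondellacstory}) lies combinatorially downstream of it. Your instinct that some geometric input is indispensable is correct, but the input you posit is, in the literature, a \emph{consequence} of this very theorem (it is Theorem \ref{theoreminterpretationcombinatoiredepoincare} combined with Corollary \ref{theohncn}), so even after repairing the reversal the argument is circular unless you supply an independent geometric proof that $\sum_{n} q^{\binom{n}{2}} P_{\F_n^a}(q^{-1/2})\, t^{n}$ has the continued fraction expansion of Theorem \ref{theogeneratingfunctioncn} --- essentially Feigin's quiver-Grassmannian proof of Theorem \ref{theogenerfunctionhn} --- which cannot be waved in as a black box.
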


The degree of the polynomial $P_{\F_n^a}(q)$ being $n(n+1)$ (for algebraic considerations, or because every Dellac configuration $C \in DC(n)$ has at most $\binom{n}{2}$ inversions, see \S \ref{sec:preliminaries}), Feigin introduced the following $q$-analog of $h_n$:
\begin{equation} \label{definitionhnq}
\tilde{h}_n(q) = q^{\binom{n}{2}} P_{\F_n^a}(q^{-1/2}) =  \sum_{C \in DC(n)} q^{\binom{n}{2} - \text{inv}(C)},
\end{equation} 
and proved the following theorem by using the geometry of quiver Grassmannians (see \cite {Feigin3}) and Flajolet's theory of continued fractions \cite{Flajolet}.

\begin{theorem}[Feigin, 2012] \label{theogenerfunctionhn}
The generating function $\sum_{n \geq 0} \tilde{h}_n(q) t^n$ has the continued fraction expansion of Formula (\ref{expansioncn+1}).
\end{theorem}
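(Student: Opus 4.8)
The plan is to reduce the statement to the coefficientwise identity $\tilde{h}_n(q) = \bar{c}_{n+1}(q)$ for every $n \geq 0$, and then to quote Theorem~\ref{theogeneratingfunctioncn}. Two formal power series in $t$ coincide exactly when all their coefficients agree, so once $\tilde{h}_n(q) = \bar{c}_{n+1}(q)$ is known, the generating function $\sum_{n \geq 0} \tilde{h}_n(q) t^n$ is literally $\sum_{n \geq 0} \bar{c}_{n+1}(q) t^n$, which by Theorem~\ref{theogeneratingfunctioncn} has the continued fraction expansion of Formula~(\ref{expansioncn+1}). The whole content thus becomes a combinatorial comparison of the Dellac model (\ref{definitionhnq}) with the continued fraction.

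To give the continued fraction a combinatorial life I would invoke Flajolet's theory \cite{Flajolet}. For the Stieltjes fraction $1/(1 - \lambda_1 t/(1 - \lambda_2 t/\cdots))$ the coefficient of $t^n$ is the weight generating function of Dyck paths of semilength $n$, where an up step reaching height $k$ carries the weight $\lambda_k$. The observation that makes this tractable is that the weights \emph{linearize}: since $\lambda_{2p-1} = (1-q^{p+1})(1-q^p)/((1-q^2)(1-q)) = \binom{p+1}{2}_q$ is a Gaussian binomial coefficient and $\lambda_{2p} = q\,\binom{p+1}{2}_q$, each weight is itself the generating polynomial of an explicit finite family of monomials (lattice paths in a $2 \times (p-1)$ box, graded by area). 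Substituting these monomial expansions turns every weighted Dyck path into a family of fully decorated paths, the \emph{Dellac histories} of size $n$, on which the total weight is a single power of $q$; the coefficient of $t^n$ in the continued fraction then reads $\sum_H q^{w(H)}$ over Dellac histories $H$ of size $n$.

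The heart of the proof is a statistic-preserving bijection $\Phi$ between $DC(n)$ and the Dellac histories of size $n$, designed so that $w(\Phi(C)) = \binom{n}{2} - \mathrm{inv}(C)$. I would build $\Phi$ by recording, column by column, how the two dots of each column are nested among the rows already processed: each of the $n$ columns carries exactly two dots, matching the $n$ up/down events of a semilength-$n$ path, the height tracks the number of currently open columns, and the decoration records the relative positions of the dots, which is precisely what the $q$-binomial weight $\lambda_k$ enumerates. Conversely a history is decoded greedily into a unique configuration, respecting the admissibility constraint $j \leq i \leq n + j$.

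The main obstacle is the bookkeeping of the inversion statistic. One must verify, step by step, that the $q$-area contributed by the decoration of the current column equals the net change in $\binom{n}{2} - \mathrm{inv}(C)$ caused by that column's two dots, and that the alternation between $\lambda_{2p-1}$ and $\lambda_{2p} = q\,\lambda_{2p-1}$ corresponds to the boundary split between the lower half ($i \leq n$) and the upper half ($i > n$) of the tableau; a small computation (of the kind illustrated already for $DC(2)$, where $\mathrm{inv}$ takes the values $0$ and $1$ and the two histories carry weights $q$ and $1$) shows these conventions must be aligned carefully, possibly after a weight complementation. Checking that the local weight accounting sums to the global statistic $\binom{n}{2} - \mathrm{inv}(C)$, and that $\Phi$ is a genuine bijection rather than merely weight-compatible, is where the real work lies; everything else is the Flajolet formalism together with the identity $\lambda_{2p-1} = \binom{p+1}{2}_q$. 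As an alternative final step one could instead target the normalized Genocchi permutation model of $\bar{c}_{n+1}(q)$, obtaining the coefficient identity directly and then concluding through Theorem~\ref{theogeneratingfunctioncn}.
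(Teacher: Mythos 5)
Your strategy coincides with the paper's: expand the Stieltjes fraction via Flajolet as a sum over Dyck paths with down-step weights $\lambda_h$, linearize each $\lambda_h$ into monomials indexed by pairs of integers (your identity $\lambda_{2p-1}=\binom{p+1}{2}_q$ is exactly Lemma \ref{egalitesumlambda}, and the resulting decorated paths are the Dellac histories of Definition \ref{definitiondellacstory}, with Proposition \ref{sumstoriesweighteddyckpath} playing the role of your ``substitution'' step), and then exhibit a bijection $\Phi\colon DC(n)\to DH(n)$ satisfying $\omega(\Phi(C))=q^{\binom{n}{2}-\mathrm{inv}(C)}$, which is Theorem \ref{bijectiondellacstory}.

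The genuine gap is that this bijection --- which you yourself flag as ``where the real work lies'' --- is never constructed, and the local, column-by-column recording you sketch would fail in the one case that makes the construction delicate. In the paper's map (Definition \ref{definitionphiC}), the two steps contributed by the $j$-th column are determined by the parities of its dots: two even dots give two up steps, a mixed column gives an up and a down step whose \emph{order} is decided by comparing $l_C^e(e_{i_1(j)})$ with $r_C^o(e_{i_2(j)})$, and a column with two odd dots gives two down steps. In this last case the decoration of the first of the two down steps cannot be read off from that column at all: it must be borrowed from the most recent column containing two even dots at the same height, whose existence is precisely Lemma \ref{lem2evendots}, and the four statistics $l_C^e(e_{i_1(j_m)}),l_C^e(e_{i_2(j_m)}),r_C^o(e_{i_1(j)}),r_C^o(e_{i_2(j)})$ are split between the two down steps. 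Without this non-local pairing the recorded pairs need not satisfy the inequalities of case (3) of Definition \ref{definitiondellacstory}, so the output is not a Dellac history. Likewise unaddressed are the verification that the step weights multiply out to $q^{\binom{n}{2}-\mathrm{inv}(C)}$ (which rests on the conversion $r_C(e_{i_2(j)})=r_C^o(e_{i_2(j)})+i-k$ of Equality (\ref{bCenfonctiondebCodd})) and the inverse algorithm $\Psi$ of Definition \ref{defPsiS} with its inductive bookkeeping. Your fallback route --- proving $\tilde{h}_n(q)=\bar{c}_{n+1}(q)$ through the normalized Dumont permutation model $\D_{n+1}'$ and quoting Theorem \ref{theogeneratingfunctioncn} --- is logically sound given Proposition \ref{Cnpolynomegenerateur} and Theorem \ref{bijectionconfigdumont}, but that bijection is equally unconstructed in your proposal, and this route leans on Han and Zeng's analytic continued-fraction result rather than producing the Dyck-path model of $\tilde{h}_n(q)$ that a combinatorial proof of the theorem is meant to supply.
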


\begin{corollary}[Feigin, 2012] \label{theohncn}
For all $n \geq 0$, we have $\tilde{h}_n(q) = \bar{c}_{n+1}(q)$.
\end{corollary}

This raises two questions.
\begin{enumerate}
\item Prove combinatorially Corollary \ref{theohncn} by constructing a bijection between Dellac configurations and some appropriate model of $\bar{c}_n(q)$ which preserves the statistics.
\item Prove combinatorially Theorem \ref{theogenerfunctionhn} within the framework of Flajolet's theory of continued fractions by defining a combinatorial model of $\tilde{h}_n(q)$ related to Dyck paths (see \cite{Flajolet}), and constructing a statistic-preserving bijection between Dellac configurations and that new model.
\end{enumerate}

The aim of this paper is to answer above two questions. We answer the first one in \S \ref{sec:dellacdumont}. In \S \ref{sec:preliminaries}, we define a combinatorial model of $\bar{c}_n(q)$ through \textit{normalized Dumont permutations}, and we provide general results about Dellac configurations. In \S \ref{sec:algorithms}, we enounce and prove Theorem \ref{bijectionconfigdumont}, which connects Dellac configurations to normalized Dumont permutations through a stastistic-preserving bijection, and implies immediatly Corollary \ref{theohncn}.\\
We answer the second question in \S \ref{sec:dellacdyck}. In \S \ref{sec:flajolet}, we recall the definition of a Dyck path and some results of Flajolet's theory of continued fractions. In \S \ref{sec:histoires}, we define \textit{Dellac histories}, which consist of Dyck paths weighted with pairs of integers, and we show that their generating function has the continued fraction expansionn of Formula (\ref{expansioncn+1}). In \S \ref{sec:bijections}, we enounce and prove Theorem \ref{bijectiondellacstory}, which connects Dellac configurations to Dellac histories through a statistic-preserving bijection, thence proving Theorem \ref{theogenerfunctionhn} combinatorially.

\section{Connection between Dellac configurations and Dumont permutations}
\label{sec:dellacdumont}

In \S \ref{sec:preliminaries}, we define \textit{normalized Dumont permutations} of order $2n$, whose set is denoted by $\D_n'$, and we prove that they generate $\bar{c}_n(q)$ with respect to the statistic $st$ defined in Formula (\ref{definitionst}), then we define the label of a Dellac configuration and a \textit{switching} transformation on the set $DC(n)$. In \S \ref{sec:algorithms}, we enounce Theorem \ref{bijectionconfigdumont} and we intend to demonstrate it. To do so, we first give two algorithms $\phi :~DC(n) \rightarrow \D_{n+1}'$ and $\varphi :~\D_{n+1}' \rightarrow~DC(n)$, and we prove that $\phi$ and $\varphi_{|\D_{n+1}''}$ are inverse maps.
Then, we show that Equation (\ref{equationstatisticpresetionconfigdumont}) is true for all $C \in DC(n)$, by showing that it is true for some particular $C^0 \in DC(n)$, then by connecting $C^0$ to every other $C \in DC(n)$ thanks to the switching transformation, which happens to preserve Equation (\ref{equationstatisticpresetionconfigdumont}).

\subsection{Preliminaries}
\label{sec:preliminaries}

\subsubsection{\textbf{Combinatorial interpretation of $\bar{c}_n(q)$}.}

\begin{definition} 
A \textit{normalized Dumont permutation} of order $2n$ is a permutation $\sigma \in \D_n$ such that, for all $j \in [n-1]$, the two integers $\sigma^{-1}(2j)$ and $\sigma^{-1}(2j+1)$ have the same parity if and only if $\sigma^{-1}(2j) > \sigma^{-1}(2j+1)$. Let $\D_n' \subset \D_n$ be the set of these permutations.
\end{definition}

\begin{proposition} \label{Cnpolynomegenerateur} For all $n \geq 1$, we have
$\bar{c}_n(q) = \sum_{\sigma \in  \D_n'} q^{st(\sigma)}.$
\end{proposition}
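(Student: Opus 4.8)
The plan is to derive the identity directly from Theorem~\ref{theointerpretationcombinatoireCn1q} and the definition $\bar{c}_n(q) = C_n(1,q)/(1+q)^{n-1}$ of~(\ref{definitioncnq}). By Theorem~\ref{theointerpretationcombinatoireCn1q} it suffices to establish the factorization
\[
  \sum_{\sigma \in \D_n} q^{st(\sigma)} \;=\; (1+q)^{n-1} \sum_{\sigma \in \D_n'} q^{st(\sigma)},
\]
which I would obtain by partitioning $\D_n$ into orbits of size $2^{n-1}$, each containing a unique normalized Dumont permutation, in such a way that summing $q^{st}$ over an orbit yields $(1+q)^{n-1}$ times the contribution of its normalized representative. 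This parallels Han and Zeng's argument for the set $\G_n''$; the one point needing care is that reversing the inequality in the defining condition (passing from $\G_n''$ to $\D_n'$) remains compatible with the statistic $st$.

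For each $j \in [n-1]$ I would introduce an involution $\tau_j : \D_n \to \D_n$ that exchanges the values $2j$ and $2j+1$, that is, swaps the entries at positions $\sigma^{-1}(2j)$ and $\sigma^{-1}(2j+1)$. First I would check that $\tau_j$ stabilizes $\D_n$: since $2j$ and $2j+1$ are consecutive, the Dumont inequalities $\sigma(2i) < 2i$ and $\sigma(2i-1) > 2i-1$ are preserved at both affected positions. Because $\tau_j$ reverses the relative order of $\sigma^{-1}(2j)$ and $\sigma^{-1}(2j+1)$ while keeping their parities, and leaves the positions of every other pair $(2j',2j'+1)$ untouched, it toggles exactly the $j$-th defining condition of $\D_n'$ and no other. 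Finally, as the value pairs $\{2j,2j+1\}$ are pairwise disjoint, the $\tau_j$ commute and generate a free action of $(\Z/2\Z)^{n-1}$ on $\D_n$; each orbit thus has exactly $2^{n-1}$ elements and contains a single permutation satisfying all $n-1$ conditions, namely the unique element of $\D_n'$ in that orbit.

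The crux is the effect of $\tau_j$ on $st$, and here I would show that $|st(\tau_j\sigma) - st(\sigma)| = 1$ by splitting into cases according to the parities of $a := \sigma^{-1}(2j)$ and $b := \sigma^{-1}(2j+1)$. When $a$ and $b$ have the same parity, both values lie in the same word $\sigma^o$ or $\sigma^e$, the sum $\sum_i \sigma(2i)$ is unchanged, and since $2j,2j+1$ are consecutive exactly one inversion (the one between them) is created or destroyed, so $inv(\sigma^o)+inv(\sigma^e)$ changes by one; when they have different parities the inversion counts are unchanged while $\sum_i \sigma(2i)$ changes by one. The \textbf{main obstacle} is to verify the \emph{sign}, namely that in every orbit the normalized representative is the one \emph{minimizing} $st$, so that an orbit contributes $q^{st(\sigma_0)}(1+q)^{n-1}$ with $\sigma_0 \in \D_n'$. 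This is exactly where the Dumont constraints are indispensable: in the mixed-parity case they force $2j$ at an even position to lie to the right of $2j+1$ and $2j$ at an odd position to lie to its left, which rules out the two configurations that would otherwise reverse the sign. Granting this, summing over all orbits gives the displayed factorization, and dividing by $(1+q)^{n-1}$ yields $\bar{c}_n(q) = \sum_{\sigma \in \D_n'} q^{st(\sigma)}$.
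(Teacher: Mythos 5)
Your proof is correct and takes essentially the same route as the paper: your involutions $\tau_j$ are exactly the compositions $\sigma \mapsto (2j,2j+1)\circ\sigma$ used there, your sign verification is the paper's claim that $st((2j,2j+1)\circ\sigma)=st(\sigma)+1$ whenever $\sigma$ satisfies the $j$-th normalization condition, and your free $(\Z/2\Z)^{n-1}$-action with one element of $\D_n'$ per orbit is precisely the group-action viewpoint the paper records in Remark \ref{remactiondegroupe}. The only difference is presentational: the paper factors out $(1+q)$ one index $j$ at a time and then intersects the sets $\D_n^j$, whereas you sum over whole orbits at once.
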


\begin{proof}
Let $j \in [n-1]$ and $\sigma \in \D_n$. Recall that $st(\sigma) = n^2 - \sum_{i=1}^n \sigma(2i) - \text{inv}(\sigma^o) - \text{inv}(\sigma^e)$. It is easy to see that the composition $\sigma' = (2j,2j+1) \circ \sigma$ of $\sigma$ with the transposition $(2j,2j+1)$ is still a Dumont permutation, and that if $\sigma$ fits the condition 
$C(j)$ defined as 
$$\sigma^{-1}(2j) > \sigma^{-1}(2j+1) \Leftrightarrow \text{ $\sigma^{-1}(2j)$ and $\sigma^{-1}(2j+1)$ have the same parity},$$
then
$st(\sigma') = st(\sigma) + 1$.
Now, if we denote by $\D_n^j \subset \D_n$ the subset of permutations that fit the condition $C(j)$, then $\D_n$ is the disjoint union 
$\D_n^j \sqcup \left( (2j,2j+1) \circ \D_n^j \right)$, where $(2j,2j+1) \circ \D_n^j$ is the set $\{(2j,2j+1) \circ \sigma, \ \sigma \in \D_n^j\}$. Since $st((2j,2j+1) \circ \sigma) = st(\sigma) + 1$ for all $\sigma \in \D_n^j$, Formula (\ref{formuleCn}) of Theorem \ref{theointerpretationcombinatoireCn1q} becomes
$$C_n(1,q) = (1+q) \sum_{\sigma \in \D_n^j} q^{st(\sigma)}.$$ 
This yields immediatly: 
$$C_n(1,q) = (1+q)^{n-1} \sum_{\sigma \in \bigcap_{j=1}^{n-1} \D_n^j} q^{st(\sigma)} = (1+q)^{n-1} \sum_{\sigma \in \D_n'} q^{st(\sigma)}.$$
The proposition then follows from Formula (\ref{definitioncnq}). \end{proof}

\subsubsection{\textbf{Label of a Dellac configuration}}

\begin{definition} \label{defetiquetage}
Let $C \in DC(n)$. For all $i \in [n]$, the dot of the $i$-th line of $C$ (from bottom to top) is labeled by the integer $e_i = 2i+2$, and the dot of the $(n+i)$-th line is labeled by the integer $e_{n+i} = 2i-1$ (see Figure \ref{Etiquetage} for an example).\\
\begin{figure}[!h] \centering 
\includegraphics[width=4cm]{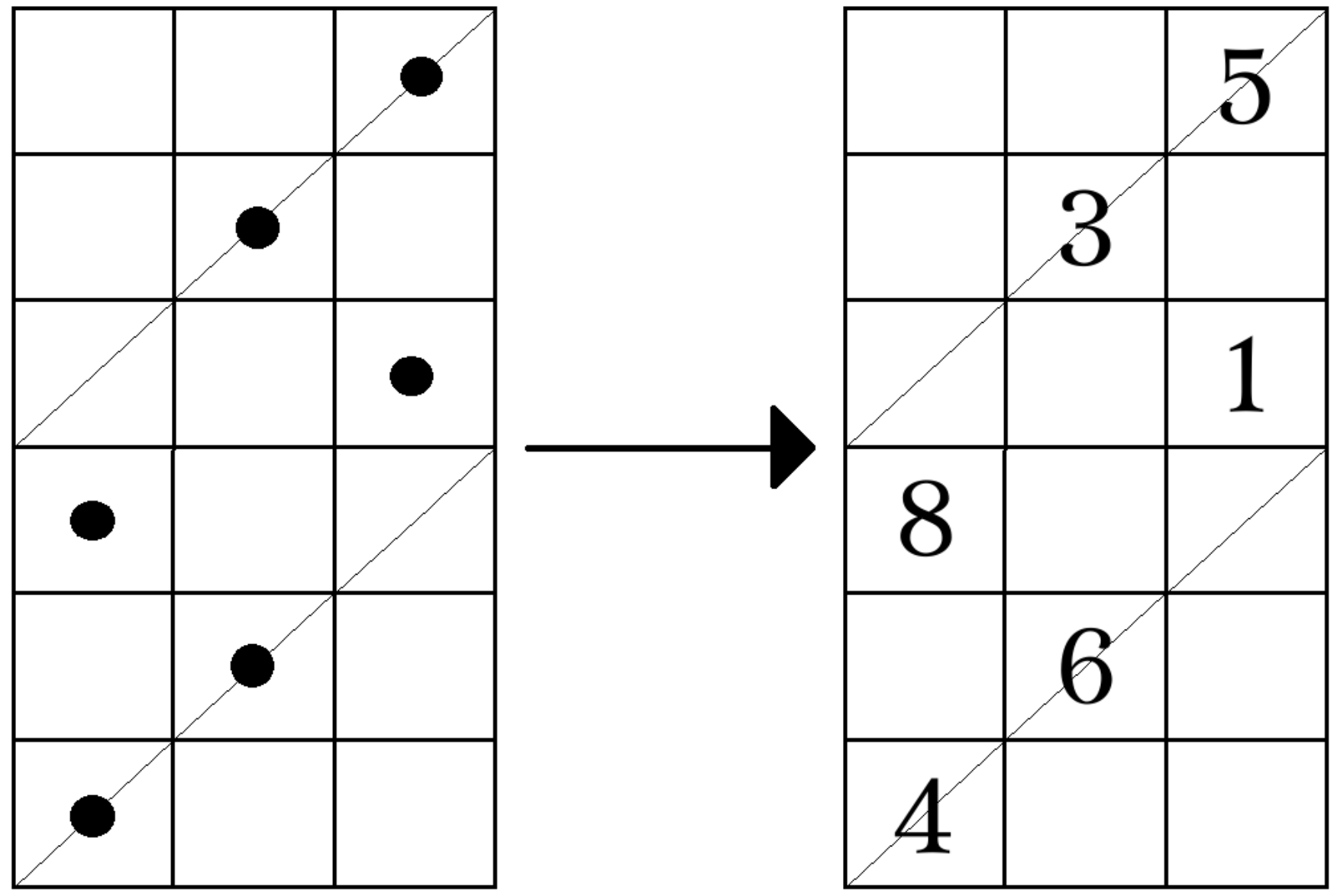} 
\caption{Label of a Dellac configuration $C \in DC(3)$.} 
\label{Etiquetage}
\end{figure} 
>From now on, we will assimilate each dot of a Dellac configuration into its label.
\end{definition}

\begin{definition}[Particular dots] \label{defparticulardots} Let $C \in DC(n)$. For all $j \in [n]$, we define $i_1^C(j) < i_2^C(j)$ such that the two dots of the $j$-th column of $C$ (from left to right) are $e_{i_1^C(j)}$ and $e_{i_2^C(j)}$. When there is no ambiguity, we write $e_{i_1(j)}$ and $e_{i_2(j)}$ instead of $e_{i_1^C(j)}$ and $e_{i_2^C(j)}$.\\
Finally, for all $i \in [n]$, we define the integers $p_C(i)$ and $q_C(i)$ such that $e_{p_C(i)}$ and $e_{n+q_C(i)}$ are respectively the $i$-th even dot and $i$-th odd dot of the sequence $$\left( e_{i_1(1)}, e_{i_2(1)}, e_{i_1(2)}, e_{i_2(2)}, \hdots, e_{i_1(n)}, e_{i_2(n)} \right).$$
For example, in Figure \ref{Etiquetage}, we have  $(e_{i_1(2)}, e_{i_2(2)}) = (6,3) = (e_2,e_5) = (e_{p_C(3)},e_{3+q_C(1)})$.
\end{definition}

\begin{remark} \label{limitationsjetons}
For all $i \in [2n]$, if the dot $e_i$ appears in the $j_i$-th column of $C$, then, by Definition \ref{definitiondellacconfiguration}, we have $j_i \leq i \leq j_i+n$.
As a result, the first $j$ columns of $C$ always contain the $j$ even dots $$e_1,e_2,\hdots,e_j,$$ and the only odd dots they may contain are $$e_{n+1},e_{n+2},\hdots,e_{n+j}.$$
Likewise, the last $n-j+1$ columns of $C$ always contain the $n-j+1$ odd dots
$$e_{n+j},e_{n+j+1},\hdots,e_{2n},$$ and the only even dots they may contain are $$e_j, e_{j+1},e_{j+2},\hdots,e_n.$$
\end{remark}

\begin{remark} \label{remarqueinegalitejetons}

Let $C \in DC(n)$ and $j \in [n]$. If the $j$-th column of $C$ contains the even dot $e_{i \leq n} = 2i+2$, then, since $j \leq i$, we have $e_i \in \{2j+2,2j+4,\hdots,2n+2\}$. Similarly, if the $j$-th column of $C$ contains the odd dot $e_{i > n} = 2(i-n)-1$, since $i \leq j+n$, we have $e_i \in \{1,3,\hdots, 2j-1\}$. As a result, we obtain the following equivalences:
$$e_{i_1^C(j)} > e_{i_2^C(j)} \Leftrightarrow i_1^C(j) \leq n < i_2^C(j) \Leftrightarrow \text{$e_{i_1^C(j)}$ and $e_{i_2^C(j)}$ have different parities.}$$

\end{remark}  

\begin{definition}[Particular configurations] \label{Sigma0}
For all $n \geq 1$, we denote by $C_0(n)$ (respectively $C_1(n)$) the Dellac configuration of size $n$ such that $(e_{i_1(j)},e_{i_2(j)}) = (e_{2j-1},e_{2j})$ (resp. $(e_{i_1(j)},e_{i_2(j)}) = (e_{j},e_{n+j})$) for all $j \in [n]$.
For example $C_0(3)$ (on the left) and $C_1(3)$ (on the right) are the two configurations depicted in Figure \ref{coc1}.
\end{definition}
It is obvious that $C_0(n)$ is the unique Dellac configuration of size $n$ with $0$ inversion, and that $\text{inv}(C_1(n)) = \binom{n}{2}$. We can also prove by induction on $n \geq 1$ that every Dellac configuration $C \in DC(n)$ has at most $\binom{n}{2}$ inversions with equality if and only if $C = C_1(n)$.

\begin{figure}[!h]
\centering
\begin{minipage}{.5\textwidth}
  \centering
\includegraphics[width=4cm]{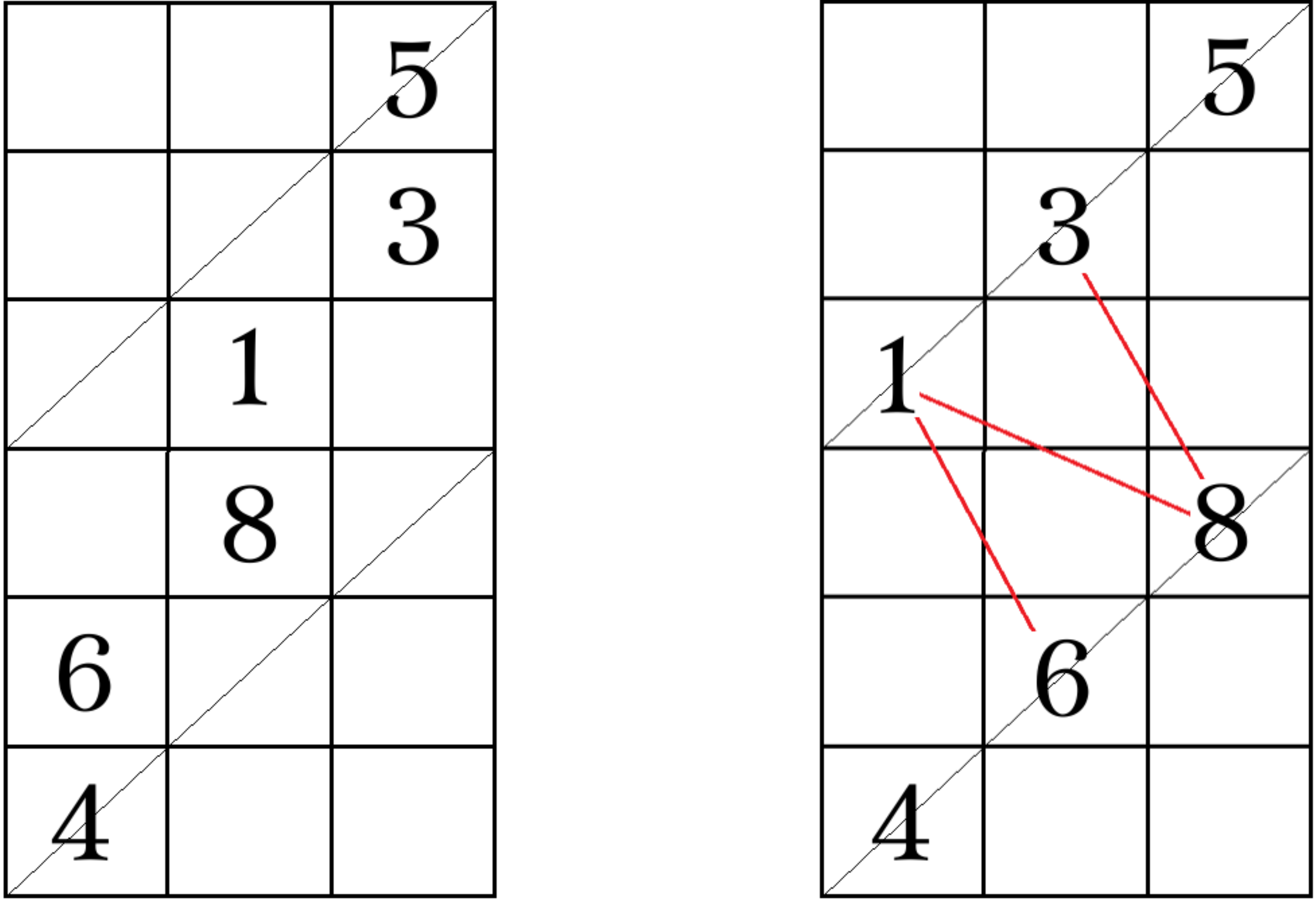}
  \caption{On the left: $C_0(3)$; on the right: $C_1(3)$.}
\label{coc1}
\end{minipage}%
\begin{minipage}{.5\textwidth}
  \centering
\includegraphics[width=3.7cm]{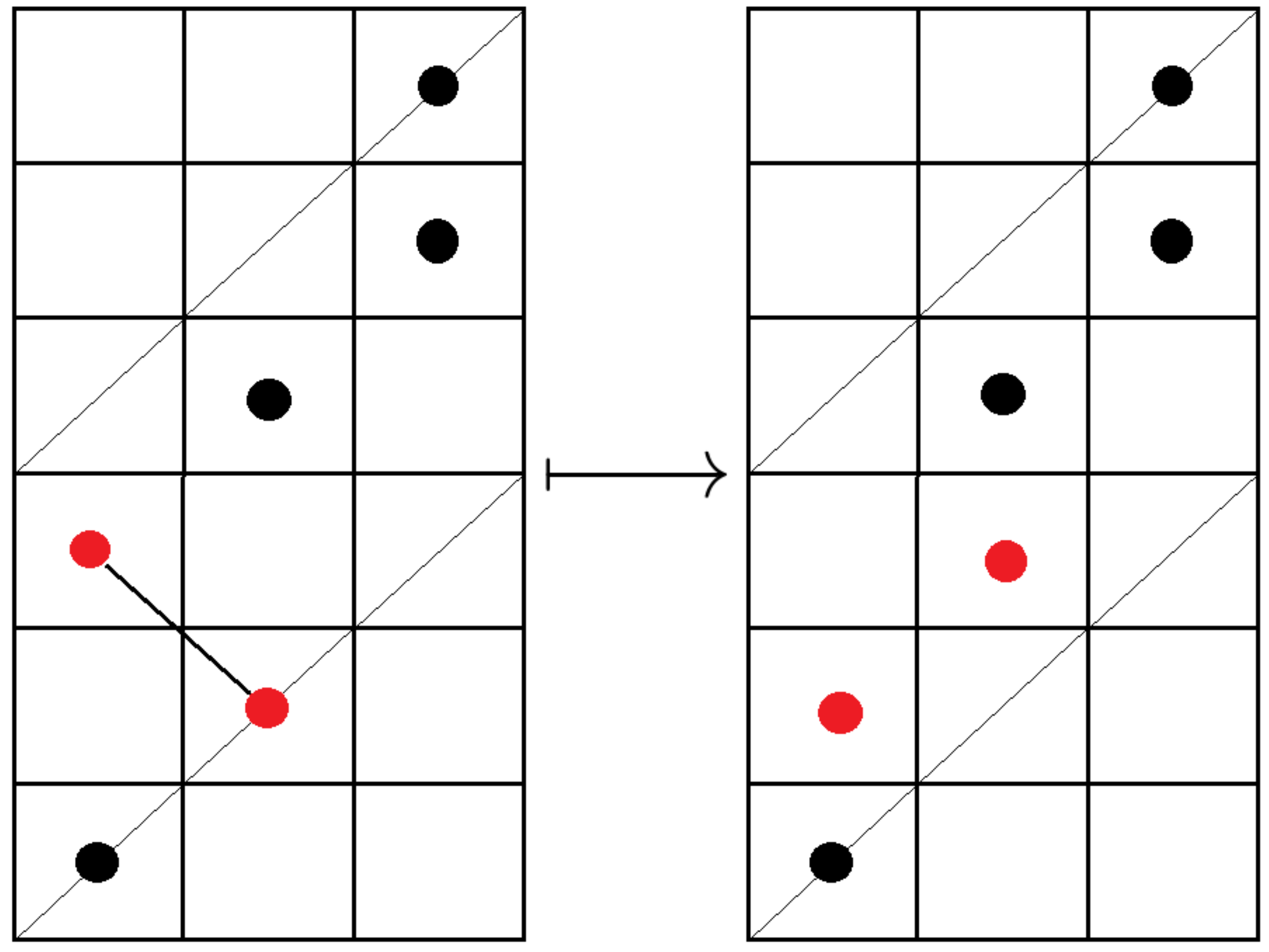} 
  \caption{The Dellac configuration $C \in DC(3)$ is mapped to $Sw^2(C)\in DC(3)$.}
\label{Pivot}
\end{minipage}
\end{figure}

\subsubsection{\textbf{Refinements of the inv statistic on $DC(n)$}}

\begin{definition}
Let $C \in DC(n)$ and $i \in [2n]$.
We define the quantity $l_C(e_i)$ (resp. $r_C(e_i)$) as the number of inversions of $C$ between the dot $e_i$ and any dot $e_{i'}$ with $i' >i$ (resp. $i' < i$).
For example, if $C = C_1(3)$ (see Figure \ref{coc1}), then $l_C(6) = r_C(3) = 1$ and $r_C(1) = l_C(8) = 2$.
\end{definition}

\subsubsection{\textbf{Switching of a Dellac configuration}} 

In the following definition, we provide a tool which transforms a Dellac configuration of $DC(n)$ into a slightly modified tableau, which is not necessarily a Dellac configuration and consequently brings the notion of \textit{switchability}.

\begin{definition} \label{switching}
Let $C \in DC(n)$ and $i \in [2n-1]$.
We denote by $Sw^{i}(C)$ the tableau obtained by switching the two consecutive dots $e_i$ and $e_{i+1}$ (\textit{i.e.}, inserting $e_i$ in $e_{i+1}$'s column and $e_{i+1}$ in $e_i$'s column). 
If the tableau $Sw^i(C)$ is still a Dellac configuration, we say that $C$ is \textit{switchable} at $i$.
In Figure \ref{Pivot}, we give an example $C \in DC(3)$ switchable at $2$.
\end{definition}

It is easy to verify the following assertions.

\begin{fact} \label{facswitchinv}
If $C \in DC(n)$ is switchable at $i$, then $|\text{inv}((Sw^i(C))) - \text{inv}(C)| \leq 1 .$
\end{fact}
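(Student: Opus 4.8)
The plan is to locate precisely which inversions the switch can touch and to show that all but one of them are untouched. Write $c_i$ and $c_{i+1}$ for the columns of the dots $e_i$ and $e_{i+1}$ in $C$ (recall these dots occupy the consecutive rows $i$ and $i+1$). By Definition \ref{switching}, the operation $Sw^i$ moves $e_i$ from column $c_i$ to column $c_{i+1}$ and $e_{i+1}$ from column $c_{i+1}$ to column $c_i$, while leaving the rows of all dots and the positions of every other dot unchanged. First I would dispose of the degenerate case $c_i = c_{i+1}$: here $Sw^i(C) = C$ and there is nothing to prove, so from now on assume $c_i \neq c_{i+1}$.

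Since every dot outside rows $i$ and $i+1$ keeps both of its coordinates, any inversion formed by two such dots is preserved. Hence the change in $\text{inv}$ can only come from pairs involving $e_i$ or $e_{i+1}$, and I would partition these into two groups: the single pair $\{e_i, e_{i+1}\}$, and, for each $k \neq i, i+1$, the two pairs $\{e_k, e_i\}$ and $\{e_k, e_{i+1}\}$ taken together. The key structural point is that rows $i$ and $i+1$ are adjacent, so every other dot $e_k$ lies either strictly below both (when $k < i$) or strictly above both (when $k > i+1$); no dot sits between them in the vertical order. Because an inversion between $e_k$ and one of the two switched dots is decided solely by comparing $c_k$ with that dot's column, and because the unordered pair of columns occupied by $e_i$ and $e_{i+1}$ equals $\{c_i, c_{i+1}\}$ both before and after $Sw^i$, the number of inversions between $e_k$ and $\{e_i, e_{i+1}\}$ equals the number of elements of $\{c_i, c_{i+1}\}$ lying below $c_k$ (if $k < i$) or above $c_k$ (if $k > i+1$). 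Each such count depends only on the set $\{c_i, c_{i+1}\}$ and on $c_k$, so it is unaffected by the swap, and this second group contributes no net change.

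It then remains to handle the pair $\{e_i, e_{i+1}\}$ itself. In $C$ this pair is an inversion exactly when the higher dot $e_{i+1}$ lies to the left, i.e. $c_{i+1} < c_i$, whereas in $Sw^i(C)$ it is an inversion exactly when $c_i < c_{i+1}$; as $c_i \neq c_{i+1}$, these two conditions are complementary, so this single pair flips its inversion status and the count changes by exactly $\pm 1$. Adding the contributions of the two groups yields $|\text{inv}(Sw^i(C)) - \text{inv}(C)| = 1 \leq 1$, as claimed (and the switchability hypothesis is not even needed for this bound). I expect the only delicate step to be the bookkeeping in the second group: the argument hinges on the adjacency of rows $i$ and $i+1$, which is precisely what forces each external dot $e_k$ to compare in the same direction against both switched dots and thereby makes the ``same set of columns'' cancellation go through. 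If the two rows were not consecutive, a dot lying vertically between them could break this cancellation, and the bound could fail.
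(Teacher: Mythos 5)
Your proof is correct: the paper states Fact \ref{facswitchinv} without proof (it is among the assertions prefaced by ``It is easy to verify''), and your decomposition---pairs avoiding rows $i$ and $i+1$ are untouched, pairs joining an outside dot to the switched pair are counted against the swap-invariant set of columns $\{c_i,c_{i+1}\}$ using the adjacency of the two rows, and the pair $\{e_i,e_{i+1}\}$ itself flips status---is exactly the verification the paper omits. Note that you in fact establish the sharper dichotomy that the difference equals $\pm 1$ precisely when $e_i$ and $e_{i+1}$ occupy different columns and $0$ otherwise, which is consistent with what the paper records separately as Facts \ref{factmemecolonne} and \ref{factinversion} and in Proposition \ref{lemswitch2}.
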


\begin{fact} \label{factswicthcond}
A Dellac configuration $C \in DC(n)$ is switchable at $i \in [2n-1]$ if and only if $C$ and $i$ satisfy one of the two following conditions:
\\ \ \\
$(1)$ $i \leq n$ and if $e_{i+1}$ is in the $j_{i+1}$-th column of $C$, then
$j_{i+1} < i+1.$\\
$(2)$ $i > n$ and if $e_i$ is in the $j_i$-th column of $C$, then
$j_i > i-n.$
\end{fact}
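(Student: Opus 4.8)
The plan is to translate the geometric requirement ``$Sw^i(C)$ is again a Dellac configuration'' into explicit inequalities on the columns $j_i$ and $j_{i+1}$ of the two exchanged dots, and then to observe that one of these inequalities is automatically satisfied, the surviving one depending on whether $i\le n$ or $i>n$.

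First I would pin down the only defining property of a Dellac configuration that the switching can possibly destroy. Recall that $e_i$ sits on row $i$ and $e_{i+1}$ on row $i+1$, and that $Sw^i(C)$ leaves each of these dots on its own row while exchanging their columns, so $e_i$ is relocated to column $j_{i+1}$ and $e_{i+1}$ to column $j_i$. Every row therefore still carries exactly one dot, and column by column the transformation merely replaces $e_i$ by $e_{i+1}$ in column $j_i$ and $e_{i+1}$ by $e_i$ in column $j_{i+1}$ (doing nothing at all when $j_i=j_{i+1}$), so the ``two dots per column'' requirement is preserved as well. Consequently $Sw^i(C)\in DC(n)$ if and only if the two relocated dots still lie between the lines $y=x$ and $y=n+x$. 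By Remark \ref{limitationsjetons} this amounts to the four inequalities $i-n\le j_{i+1}\le i$ (for the relocated $e_i$) and $(i+1)-n\le j_i\le i+1$ (for the relocated $e_{i+1}$).

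Next I would discard the redundant inequalities by using the bounds that $C$ already satisfies, namely $i-n\le j_i\le i$ and $(i+1)-n\le j_{i+1}\le i+1$. The lower bound $j_{i+1}\ge i-n$ is implied by $j_{i+1}\ge i+1-n$, and the upper bound $j_i\le i+1$ is implied by $j_i\le i$; both are thus free. What actually remains to be checked is the pair $j_{i+1}\le i$ and $j_i\ge i+1-n$, that is, $j_{i+1}<i+1$ together with $j_i>i-n$.

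Finally I would split on the value of $i\in[2n-1]$. If $i\le n$, then $i+1-n\le 1\le j_i$, so $j_i>i-n$ holds automatically and switchability reduces to $j_{i+1}<i+1$, which is exactly condition $(1)$. If $i>n$, then $j_{i+1}\le n<i$, so $j_{i+1}\le i$ holds automatically and switchability reduces to $j_i>i-n$, which is exactly condition $(2)$; since each $i\in[2n-1]$ lies in precisely one of these two ranges, the equivalence follows. The only point requiring genuine care—the closest thing to an obstacle—is the asymmetry between the two relocated dots: one must check that on each side it is precisely the ``outer'' inequality that can fail, while the corresponding ``inner'' inequality is forced by the range bounds of Remark \ref{limitationsjetons}.
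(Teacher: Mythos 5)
Your proof is correct. The paper states this Fact without proof (it is introduced only by ``It is easy to verify the following assertions''), and your argument is exactly the intended verification: switching can only threaten the requirement that dots lie between the lines $y=x$ and $y=n+x$, and after discarding the two inequalities already guaranteed by Remark \ref{limitationsjetons}, the case split $i\le n$ versus $i>n$ leaves precisely the single inequality stated in $(1)$ or $(2)$.
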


In particular :

\begin{fact} \label{factinvolution}
If $C$ is switchable at $i$, then $Sw^i(C)$ is still switchable at $i$ and $Sw^i(Sw^i(C)) = C$.
\end{fact}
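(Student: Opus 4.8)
The plan is to prove Fact~\ref{factinvolution} directly from Definition~\ref{switching} together with the switchability criterion of Fact~\ref{factswicthcond}. The key observation is that the switching operation $Sw^i$ only ever moves the two dots $e_i$ and $e_{i+1}$: it places $e_i$ in the column previously occupied by $e_{i+1}$ and $e_{i+1}$ in the column previously occupied by $e_i$, leaving every other dot fixed. First I would record the effect of $Sw^i$ on the column positions: if $e_i$ sits in column $j_i$ and $e_{i+1}$ in column $j_{i+1}$ in $C$, then in $Sw^i(C)$ the dot $e_i$ sits in column $j_{i+1}$ and $e_{i+1}$ in column $j_i$. Applying $Sw^i$ a second time swaps these back, which immediately gives $Sw^i(Sw^i(C)) = C$ as tableaux, provided the intermediate tableau $Sw^i(C)$ is itself a legitimate Dellac configuration (so that $Sw^i$ is even applicable to it). This last proviso is exactly the content of the first assertion, that $Sw^i(C)$ is again switchable at $i$.

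The substantive part of the argument is therefore to verify that switchability of $C$ at $i$ forces switchability of $Sw^i(C)$ at $i$. I would do this by checking that the two conditions of Fact~\ref{factswicthcond} are self-reproducing under the swap. Consider case $(1)$, where $i \leq n$ and the hypothesis on $C$ is $j_{i+1} < i+1$. After switching, the dot $e_{i+1}$ of $Sw^i(C)$ occupies the column $j_i$ that $e_i$ had in $C$. One must confirm that this new column index for $e_{i+1}$ still satisfies $j_i < i+1$; this follows because in the original configuration $C$ the even dot $e_i = 2i+2$ (for $i \le n$) lies in a column $j_i \le i < i+1$ by the constraint recalled in Remark~\ref{limitationsjetons}. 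A symmetric verification handles case $(2)$, where $i > n$: the odd dot $e_{i+1}$ of $Sw^i(C)$ lands in the column $j_i > i-n$ inherited from $e_i$ in $C$, and one checks the bound $j_{i+1} > i-n$ transfers correctly using the inequality $j_{i+1} \ge (i+1) - n > i - n$ guaranteed by Remark~\ref{limitationsjetons}.

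The only real obstacle I anticipate is bookkeeping: one must be careful that after the swap it is the \emph{roles} of the two dots that interchange, so the condition originally stated in terms of $e_{i+1}$ must be re-examined for the dot now playing that positional part, and the column-index inequalities from Remark~\ref{limitationsjetons} must be invoked for the correct dot ($e_i$ versus $e_{i+1}$) in each case. Once the switchability of $Sw^i(C)$ at $i$ is established, the identity $Sw^i(Sw^i(C)) = C$ is immediate from the definition, since swapping the column assignments of $e_i$ and $e_{i+1}$ twice is the identity on column assignments and all other dots are untouched. I would keep the proof brief, as the statement is flagged in the text as one of several assertions that are ``easy to verify,'' and present the two cases of Fact~\ref{factswicthcond} in parallel.
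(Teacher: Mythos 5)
Your proof is correct: the paper itself offers no argument for this fact (it is one of the assertions flagged as ``easy to verify''), and your direct check — that $Sw^i$ swaps only the columns of $e_i$ and $e_{i+1}$, so applying it twice is the identity, together with the verification via Fact~\ref{factswicthcond} and Remark~\ref{limitationsjetons} that the relevant inequality ($j_i \le i < i+1$ when $i \le n$, and $j_{i+1} \ge i+1-n > i-n$ when $i > n$) is inherited by $Sw^i(C)$ — is exactly the intended routine verification. One could shortcut the switchability claim by noting that $Sw^i(Sw^i(C)) = C \in DC(n)$ holds at the level of tableaux and itself witnesses that $Sw^i(C)$ is switchable at $i$, but your case analysis is equally valid.
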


\begin{fact} \label{factmemecolonne}
If $e_i$ and $e_{i+1}$ are in the same column of $C$, then $C$ is switchable at $i$ and $C = Sw^i(C)$.
\end{fact}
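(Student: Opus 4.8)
The plan is to prove Fact \ref{factmemecolonne} directly from the definition of the switching operation (Definition \ref{switching}) together with the switchability criterion of Fact \ref{factswicthcond}, which I will treat as already established. The statement has two parts: first, that $C$ is switchable at $i$ whenever $e_i$ and $e_{i+1}$ occupy the same column; second, that in this case the switch leaves $C$ unchanged, i.e. $C = Sw^i(C)$.

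I would begin with the second assertion, which is the more transparent one. By definition, $Sw^i(C)$ is obtained by placing $e_i$ in $e_{i+1}$'s column and $e_{i+1}$ in $e_i$'s column. If $e_i$ and $e_{i+1}$ already share a single column, then these two prescribed columns coincide, so the operation reinserts both dots into the very column they started in. The two dots of that column are simply interchanged as a set $\{e_i, e_{i+1}\}$, which does not alter the tableau: the set of occupied cells is identical, so $Sw^i(C) = C$ as tableaux. I would state this carefully, noting that the labels $e_i$ and $e_{i+1}$ are attached to the rows (by Definition \ref{defetiquetage}), so ``switching'' them within a shared column returns the same configuration of dots.

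For the first assertion, since $Sw^i(C) = C$ and $C$ is by hypothesis a Dellac configuration, the tableau $Sw^i(C)$ is trivially still a Dellac configuration, which is exactly the definition of $C$ being switchable at $i$. Alternatively, one can verify switchability through Fact \ref{factswicthcond}: if $e_i$ and $e_{i+1}$ lie in a common column $j$, then Remark \ref{limitationsjetons} (the column constraints $j_{i'} \le i' \le j_{i'}+n$) forces $j \le i$ and $j \le i+1$, and one checks that the relevant inequality in case $(1)$ or $(2)$ of Fact \ref{factswicthcond} is met. This alternative is a clean consistency check but is not strictly needed once we observe $Sw^i(C)=C$.

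I do not anticipate a genuine obstacle here, as this is a routine verification flagged by the author as ``easy.'' The only point requiring slight care is conceptual rather than computational: making precise what it means for the labeled dots $e_i, e_{i+1}$ to be ``switched'' when they sit in the same column, and confirming that the row-labels are preserved so that the resulting tableau is literally identical to $C$ rather than merely isomorphic. Once this bookkeeping is settled, both conclusions follow immediately.
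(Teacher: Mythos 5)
Your proof is correct: switching two dots that already share a column is the identity operation on the tableau, so $Sw^i(C)=C$ is trivially a Dellac configuration, which is exactly switchability at $i$. The paper states this fact without proof (``easy to verify''), and your argument, including the optional consistency check against Fact \ref{factswicthcond} via Remark \ref{limitationsjetons}, is precisely the routine verification it has in mind.
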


\begin{fact} \label{factinversion}
If $(e_i,e_{i+1})$ is an inversion of $C$, then $C$ is switchable at $i$ and $\text{inv}(Sw^i(C)) = \text{inv}(C) - 1$ (like in Figure \ref{Pivot}).
\end{fact}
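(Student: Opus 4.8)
The plan is to establish the two assertions separately---first switchability, then the exact change in the inversion count---after recording what the hypothesis means geometrically. Since the dot $e_{i+1}$ lies on line $i+1$ and $e_i$ on line $i$, the dot $e_{i+1}$ has the larger row, so the pair $(e_i,e_{i+1})$ being an inversion forces $e_{i+1}$ to sit strictly to the left of $e_i$. Writing $j_i$ and $j_{i+1}$ for the columns of $e_i$ and $e_{i+1}$, this reads $j_{i+1} < j_i$. I would also note at the outset that $Sw^i(C)$ merely swaps the columns of $e_i$ and $e_{i+1}$, leaving every other dot, and the rows of $e_i,e_{i+1}$ themselves, untouched.

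For switchability I would invoke Fact~\ref{factswicthcond}, splitting on its two cases. If $i \le n$, I must check $j_{i+1} < i+1$; combining the inversion inequality $j_{i+1} < j_i$ with the bound $j_i \le i$ from Remark~\ref{limitationsjetons} gives $j_{i+1} < j_i \le i < i+1$, as required. If $i > n$, I must check $j_i > i-n$; here I apply Remark~\ref{limitationsjetons} to $e_{i+1}$ to get $j_{i+1} \ge (i+1)-n$, whence $j_i > j_{i+1} \ge i+1-n > i-n$. In both cases the hypotheses of Fact~\ref{factswicthcond} are met, so $C$ is switchable at $i$ and $Sw^i(C) \in DC(n)$.

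For the inversion count, the key point is that the only inversion whose status changes under the switch is the pair $(e_i,e_{i+1})$ itself, which passes from an inversion in $C$ (columns $j_{i+1}<j_i$, rows $i<i+1$) to a non-inversion in $Sw^i(C)$, where $e_i$ now occupies column $j_{i+1}$ on row $i$ and $e_{i+1}$ occupies column $j_i$ on row $i+1$, i.e.\ in sorted order. To see that no other inversion is affected, I would fix a third dot $e_k$ with $k \notin \{i,i+1\}$ and compare its joint contribution with $e_i$ and $e_{i+1}$ before and after the switch. When $k > i+1$ the dot $e_k$ lies above both, so the number of inversions it forms with $\{e_i,e_{i+1}\}$ equals $[\,j_k < j_i\,]+[\,j_k < j_{i+1}\,]$ before the switch and $[\,j_k < j_{i+1}\,]+[\,j_k < j_i\,]$ afterwards; these agree. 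The case $k < i$ is symmetric, with the two columns again entering symmetrically. Hence every third dot contributes equally in $C$ and in $Sw^i(C)$, and summing over $k$ yields $\text{inv}(Sw^i(C)) = \text{inv}(C) - 1$.

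The main obstacle is this last step: one must verify carefully that the contributions of all third dots cancel, which rests on the observation that swapping the two columns $j_i$ and $j_{i+1}$ leaves each such contribution invariant by symmetry, and that equal-column configurations, which are never inversions, are correctly handled by the strict inequalities. Everything else reduces to the elementary inequalities above together with Remark~\ref{limitationsjetons} and Fact~\ref{factswicthcond}.
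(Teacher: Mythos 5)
Your proposal is correct. The paper offers no proof of this fact (it is listed among assertions that are ``easy to verify''), and your argument---deducing $j_{i+1}<j_i$ from the inversion hypothesis, checking both cases of Fact~\ref{factswicthcond} via the bounds of Remark~\ref{limitationsjetons}, and then showing every third dot $e_k$ contributes symmetrically to the inversion count before and after the swap so that only the pair $(e_i,e_{i+1})$ changes status---is exactly the routine verification the paper has in mind.
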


\begin{fact} \label{facttoujoursenn}
A Dellac configuration $C \in DC(n)$ is always switchable at $n$.
\end{fact}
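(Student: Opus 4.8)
The plan is to reduce the claim to the characterization of switchability already recorded in Fact~\ref{factswicthcond}. Since the index $n$ satisfies $n \le n$, the relevant criterion is condition~$(1)$: writing $j_{n+1}$ for the column of the dot $e_{n+1}$ in $C$, I must verify that $j_{n+1} < n+1$.

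This last inequality is immediate from the geometry of a Dellac configuration. By Definition~\ref{definitiondellacconfiguration}, the tableau $C$ has width $n$, so every dot lies in one of the columns $1, 2, \ldots, n$; in particular $1 \le j_{n+1} \le n$, whence $j_{n+1} \le n < n+1$. Thus condition~$(1)$ of Fact~\ref{factswicthcond} holds for $i = n$, and $C$ is switchable at $n$.

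If one prefers to argue directly rather than through Fact~\ref{factswicthcond}, one can check that the swapped tableau $Sw^n(C)$ is itself a Dellac configuration. Switching $e_n$ and $e_{n+1}$ leaves these dots on rows $n$ and $n+1$ respectively and only exchanges their columns, so each row still carries exactly one dot and each column still carries exactly two dots. It then remains to confirm that both dots stay between the lines $y = x$ and $y = n+x$, i.e., that a dot on row $i$ in column $j$ obeys $i-n \le j \le i$ (Remark~\ref{limitationsjetons}). After the swap, $e_n$ occupies column $j_{n+1}$ on row $n$, requiring $0 \le j_{n+1} \le n$, and $e_{n+1}$ occupies column $j_n$ on row $n+1$, requiring $1 \le j_n \le n+1$; both ranges are automatically respected because $j_n, j_{n+1} \in [n]$.

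I expect no real obstacle here: the only point deserving care is recalling that the column index of any dot is bounded above by the width $n$, which is precisely what makes condition~$(1)$ hold without any hypothesis on $C$. The rows $n$ and $n+1$ are exactly the two rows where the boundary constraint $i-n \le j \le i$ is slackest, so the bounding lines never obstruct the switch.
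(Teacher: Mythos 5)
Your proof is correct and follows the route the paper intends: the paper states this fact without proof (as one of the assertions that are ``easy to verify''), and the natural verification is exactly your observation that condition~$(1)$ of Fact~\ref{factswicthcond} holds automatically at $i=n$ because every column index is at most the width $n$, so $j_{n+1} \le n < n+1$. The supplementary direct check that $Sw^n(C)$ respects the bounding lines $y=x$ and $y=n+x$ is also accurate, though redundant given Fact~\ref{factswicthcond}.
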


\subsection{Construction of a statistic-preserving bijection}
\label{sec:algorithms}

In this part, we intend to prove the following result.

\begin{theorem} \label{bijectionconfigdumont}
There exists a bijection $\phi : DC(n) \rightarrow \D_{n+1}'$ such that the equality
\begin{equation} \label{equationstatisticpresetionconfigdumont}
st(\phi(C)) = \binom{n}{2} - \text{inv}(C)
\end{equation}
is true for all $C \in DC(n)$.
\end{theorem}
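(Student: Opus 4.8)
The plan is to construct the bijection $\phi : DC(n) \rightarrow \D_{n+1}'$ explicitly via an algorithm that reads off the dots of a Dellac configuration in their natural column-by-column order and uses the labelling of Definition \ref{defetiquetage} to produce a permutation in $\Sig_{2(n+1)}$. The guiding intuition is provided by Definition \ref{defparticulardots}: the sequence $\left(e_{i_1(1)},e_{i_2(1)},\hdots,e_{i_1(n)},e_{i_2(n)}\right)$ records the labels in reading order, and the functions $p_C$ and $q_C$ separate the even labels from the odd labels. Since the labels $e_i = 2i+2$ for $i \le n$ are exactly the even integers and $e_{n+i} = 2i-1$ are exactly the odd integers, I would define $\phi(C) = \sigma$ so that $\sigma$ sends odd positions to the even labels (in the order prescribed by $p_C$) and even positions to the odd labels (in the order prescribed by $q_C$), after a shift that lands us in $\D_{n+1}$ rather than $\D_n$. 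First I would pin down this dictionary precisely, check that the parity conditions $\sigma(2i) < 2i$ and $\sigma(2i-1) > 2i-1$ defining a Dumont permutation follow from Remark \ref{limitationsjetons} and Remark \ref{remarqueinegalitejetons}, and then verify that the normalization condition $C(j)$ is automatically satisfied, so that $\sigma \in \D_{n+1}'$.

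The second ingredient is the inverse map $\varphi : \D_{n+1}' \rightarrow DC(n)$. Here I would reverse the dictionary: from a normalized Dumont permutation I would recover, for each label, which column it occupies, placing dots row by row and column by column. The delicate point is that an arbitrary assignment of labels to columns need not respect the Dellac constraint that every dot $e_i$ sit in a column $j_i$ with $j_i \le i \le j_i + n$ (Remark \ref{limitationsjetons}); I would need to show that the normalization condition on $\sigma$ forces exactly the admissible placements. Following the roadmap announced in \S\ref{sec:dellacdumont}, I would prove that $\phi$ and $\varphi_{|\D_{n+1}''}$ are mutually inverse, which establishes that $\phi$ is a bijection.

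For the statistic-preserving identity (\ref{equationstatisticpresetionconfigdumont}), rather than computing $st(\phi(C))$ directly for general $C$, I would follow the inductive/transport strategy sketched in the paper. First I would verify $st(\phi(C^0)) = \binom{n}{2} - \text{inv}(C^0)$ for one anchor configuration $C^0$; the natural choice is $C^0 = C_0(n)$, the unique configuration with zero inversions, where both sides should reduce to a clean closed form. Then I would use the switching transformation $Sw^i$ of Definition \ref{switching}: by Facts \ref{facswitchinv} and \ref{factinversion}, a single switch changes $\text{inv}(C)$ by at most one and in the inversion-removing case by exactly $-1$, and I would show that the corresponding change in $st(\phi(C))$ matches, so that (\ref{equationstatisticpresetionconfigdumont}) is preserved under switching. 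Because Fact \ref{facttoujoursenn} guarantees switchability at $n$ and the switches generate enough connectivity, every $C \in DC(n)$ can be linked to $C_0(n)$ by a chain of switches, propagating the identity across all of $DC(n)$.

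The main obstacle I anticipate is the bookkeeping that certifies $\varphi$ lands in $DC(n)$ and is genuinely inverse to $\phi$: translating the parity-and-order normalization condition defining $\D_{n+1}'$ into the geometric admissibility condition $j_i \le i \le j_i + n$ on dot placements is where the two models must be shown to encode exactly the same data. Equally subtle is tracking how $st$ responds to a switch. The quantity $st(\sigma) = n^2 - \sum_i \sigma(2i) - \text{inv}(\sigma^o) - \text{inv}(\sigma^e)$ mixes a linear term in the even values with two word-inversion counts, and I would need to show — presumably using the refined statistics $l_C$ and $r_C$ of the inv count on $DC(n)$ — that switching two consecutive labels $e_i, e_{i+1}$ induces precisely the compensating change in these three terms. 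Getting that local change to come out to exactly $\mp 1$ in lockstep with the change in $\text{inv}(C)$ is the crux of the argument.
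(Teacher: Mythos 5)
Your overall architecture coincides with the paper's: the same column-reading map $\phi$ (the paper defines $\phi(C)$ as the inverse of the word $2\,e_{i_2(1)}e_{i_1(1)}\cdots e_{i_2(n)}e_{i_1(n)}\,(2n+1)$), the same inverse map $\varphi$, and the same transport of the identity (\ref{equationstatisticpresetionconfigdumont}) along switching moves from an anchor configuration. However, two steps that carry essentially all the mathematical weight are missing or wrongly justified. First, connectivity: you invoke Fact~\ref{facttoujoursenn} (switchability at $n$), but this cannot yield connectivity --- by Fact~\ref{factinvolution} the map $Sw^n$ is an involution, so switching only at $n$ partitions $DC(n)$ into orbits of size at most two. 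What is needed is Proposition~\ref{iterswitch}, whose proof is a genuine argument: via the auxiliary permutation $\tau_C$ and Lemma~\ref{equivinversionconfigtau}, any $C \neq C_0(n)$ possesses an inversion of the form $(e_i,e_{i+1})$, which Fact~\ref{factinversion} lets you switch away, and induction on $\text{inv}(C)$ then connects everything to $C_0(n)$.

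Second, the local identity $st(\phi(Sw^{i}(C))) - st(\phi(C)) = \text{inv}(C) - \text{inv}(Sw^{i}(C))$, which you correctly call the crux, is left entirely unproved, and your plan misses the structure of the difficulty. When $e_i$ and $e_{i+1}$ have the same parity --- which happens for every $i \neq n$, since consecutive labels of equal parity are consecutive even (or odd) integers --- the switch replaces $\sigma = \phi(C)$ by $\sigma \circ (e_i,e_{i+1})$ (Proposition~\ref{lemswitch2}), which swaps two values sitting in adjacent positions of the same subword $\sigma^e$ or $\sigma^o$: the sum $\sum_i \sigma(2i)$ is unchanged and exactly one of $\text{inv}(\sigma^e)$, $\text{inv}(\sigma^o)$ moves by one, so this case is immediate. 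The entire difficulty is the single case $i = n$, where $(e_n,e_{n+1}) = (2n+2,1)$ have opposite parities: values then migrate between $\sigma^e$ and $\sigma^o$, all three terms of $st$ change at once, and the paper needs the alternative description $\tau_C(i) = i + l_C(e_i) - r_C(e_i)$ (Proposition~\ref{lemdirect}) together with the counting identities of Lemma~\ref{explicitationcardinaux} to see the contributions cancel to exactly $1$. A proposal that treats all switches uniformly, ``presumably using $l_C$ and $r_C$,'' has not located this case, and without it the induction does not close. Two lesser remarks: your anchor $C_0(n)$ does satisfy (\ref{equationstatisticpresetionconfigdumont}), but verifying it there requires an actual computation, whereas the paper anchors at $C_1(n)$ (Definition~\ref{Sigma0}), where $\phi(C_1(n)) = 2143\cdots(2n+2)(2n+1)$ gives $st = 0 = \binom{n}{2} - \text{inv}(C_1(n))$ on sight; and your proposed dictionary (odd positions carrying the even labels) cannot be made literally correct, since the two dots of a single column --- which may both be even, by Remark~\ref{remarqueinegalitejetons} --- necessarily occupy one even and one odd position in the word defining $\phi(C)^{-1}$.
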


In the following, we define $\phi : DC(n) \rightarrow \D_{n+1}'$ and in order to prove that it is bijective, we construct $\varphi : \D_{n+1} \rightarrow DC(n)$ such that $\phi$ and $\varphi_{|\D_{n+1}'}$ are inverse maps.

\subsubsection{\textbf{Algorithms}}
\textbf{Definition of $\phi$.} We define $\phi : DC(n) \rightarrow \Sig_{2n+2}$ by mapping $C \in DC(n)$ to the permutation $\phi(C) \in \Sig_{2n+2}$ defined as the inverse map of the permutation 
$$2  \widehat{e_{i_2(1)} e_{i_1(1)}} \widehat{e_{i_2(2)} e_{i_1(2)}}  \hdots \widehat{e_{i_2(n)} e_{i_1(n)}} (2n+1),$$
where we recall that $e_{i_1(j)}$ and $e_{i_2(j)}$ are respectively the lower and upper dots of the $j$-th column of $C$ for all $j \in [n]$.

\begin{example} \label{exemplevarphiC}
If $C \in DC(3)$ is the Dellac configuration depicted in Figure \ref{exemplevarphisigma}, we obtain $\phi(C)^{-1} = 2 \widehat{84}  \widehat{16} \widehat{53} 7$.\\
\begin{figure}[!h] \center
\includegraphics[width=1.5cm]{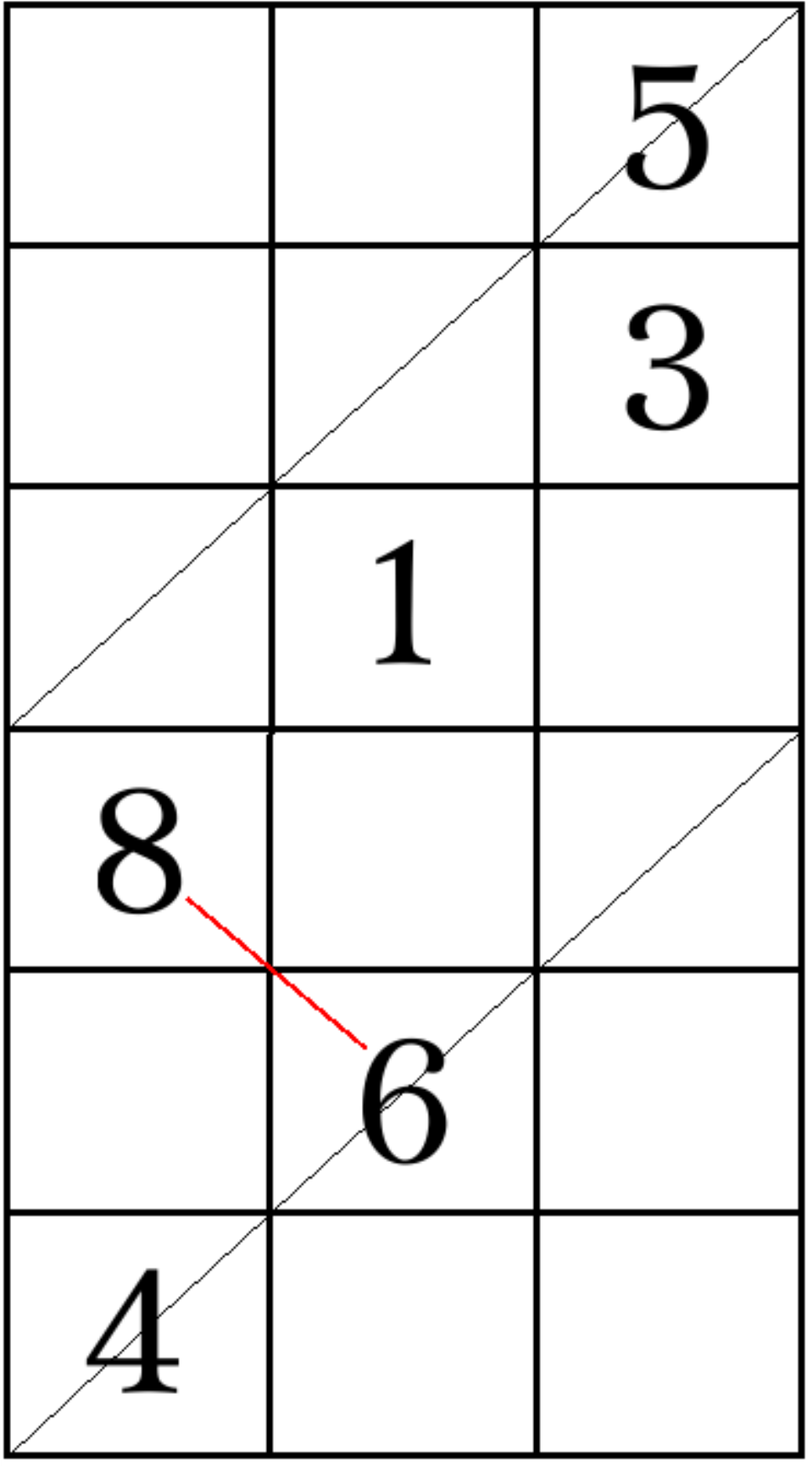}
\caption{$C \in DC(3)$.}
\label{exemplevarphisigma}
\end{figure}
\end{example}

\begin{proposition} \label{varphiCDumont}
For all $C \in DC(n)$, the permutation $\phi(C)$ is a normalized Dumont permutation.
\end{proposition}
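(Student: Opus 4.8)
The plan is to verify directly that $\phi(C)$ satisfies both required properties: that it is a Dumont permutation of order $2n+2$, and that it satisfies the normalization condition on the pairs $(\sigma^{-1}(2j),\sigma^{-1}(2j+1))$. Since $\phi(C)$ is defined as the inverse of the explicit word $w = 2\,\widehat{e_{i_2(1)}e_{i_1(1)}}\cdots\widehat{e_{i_2(n)}e_{i_1(n)}}\,(2n+1)$, the key observation is that $\phi(C)^{-1}(k)$ is simply the $k$-th letter of $w$, so the parities and relative order of the preimages $\sigma^{-1}(m)$ are read off directly from the positions and values of the letters of $w$. Concretely, position $1$ holds $2$, position $2n+2$ holds $2n+1$, and the middle $2n$ positions hold, in column order, the upper dot then the lower dot of each column. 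I would first record how the label $e_i$ relates to the actual value $2i+2$ (for $i\le n$) or $2i-1$ (for $i>n$), since the Dumont condition $\sigma(2i)<2i$, $\sigma(2i-1)>2i-1$ must be checked against these genuine values, not the indices.

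First I would prove the Dumont property. The condition $\sigma=\phi(C)\in\D_{n+1}$ means $\sigma(2i)<2i$ and $\sigma(2i-1)>2i-1$ for all $i$; equivalently, in terms of the inverse, that each value $2i$ sits at an odd position in $w$ below it and each value $2i-1$ sits at an even position above it, which I would translate into a statement purely about the placement of even and odd dots in the columns of $C$. Here Remark \ref{limitationsjetons} and Remark \ref{remarqueinegalitejetons} do the heavy lifting: they constrain which even and odd labels can appear in the first $j$ versus the last $n-j+1$ columns, and they give the crucial parity/order equivalence for the two dots in a single column. The bookended letters $2$ and $2n+1$ handle the boundary cases $i=1$ and $i=n+1$ automatically.

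The main obstacle will be the second property, the normalization condition: for each $j\in[n]$, the preimages $\sigma^{-1}(2j)$ and $\sigma^{-1}(2j+1)$ have the same parity if and only if $\sigma^{-1}(2j)>\sigma^{-1}(2j+1)$. I would read both preimages as positions in the word $w$ and split into cases according to whether the two values $2j,2j+1$ land in the same column of $C$ or in different columns. The intended point is that, by the way each column is written as (upper dot, lower dot) and by the parity/order equivalence of Remark \ref{remarqueinegalitejetons}, consecutive values that occupy one column automatically realize the biconditional, while values in different columns realize it through the left-to-right column ordering combined with the within-column transposition. The careful part is keeping straight the interaction between three orderings simultaneously: the numerical order of the values $2j<2j+1$, the parity of their positions in $w$, and the geometric left-to-right order of the columns; I expect the argument to reduce to checking that the transposition of the two dots within each written column is exactly what converts the geometric configuration into the normalized-Dumont parity condition. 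Once both properties are established, $\phi(C)\in\D_{n+1}'$ follows by definition.
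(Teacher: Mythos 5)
The genuine problem is in the part you call ``the main obstacle'', i.e.\ the normalization condition. You propose to split into cases according to whether the two values $2j$, $2j+1$ lie in the same column of $C$ or in different columns; but that dichotomy concerns the dots \emph{labelled} $2j$ and $2j+1$, i.e.\ it concerns $\sigma(2j)$ and $\sigma(2j+1)$, which is not what the condition asks about. The condition asks about the preimages $\sigma^{-1}(2j)$ and $\sigma^{-1}(2j+1)$, and since $\sigma^{-1}$ \emph{is} the word $w$, these are the letters at positions $2j$ and $2j+1$ of $w$ --- which, by the very construction of $w$, are $e_{i_2(j)}$ and $e_{i_1(j)}$, the upper and lower dots of the $j$-th column of $C$. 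They always lie in the same column of $C$: there are no cases, and no ``interaction of three orderings'' to untangle. The condition to verify reads: $e_{i_2(j)}$ and $e_{i_1(j)}$ have the same parity if and only if $e_{i_2(j)} > e_{i_1(j)}$, and this is precisely the (negated) equivalence of Remark \ref{remarqueinegalitejetons}, since two distinct labels satisfy $e_{i_1(j)} > e_{i_2(j)} \Leftrightarrow$ different parities. So normalization is the immediate half of the proof, exactly as the paper does it in one line; as written, your plan would have you analyzing the wrong pair of dots, and the ``different columns'' branch of your case analysis has no counterpart in the actual verification.

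Your treatment of the Dumont property, by contrast, is essentially the paper's (use the constraint $j_i \le i \le j_i + n$ of Remark \ref{limitationsjetons}: if the dot with even value $2i$ lies in column $j$ then $\sigma(2i) \in \{2j,2j+1\}$ with $j \le i-1$, so $\sigma(2i) < 2i$, and symmetrically for odd values), but your ``equivalent'' restatement is false: it is not true that each even value sits at an odd position of $w$. In Example \ref{exemplevarphiC} one has $\phi(C)^{-1} = 2\,8\,4\,1\,6\,5\,3\,7$, where the value $8$ occupies the even position $2$; upper dots occupy even positions and lower dots odd positions regardless of the parity of their labels. Two smaller points: for $i > n$ the label is $e_i = 2(i-n)-1$, not $2i-1$; and Remark \ref{remarqueinegalitejetons} is not needed for the Dumont half at all --- it is exactly the normalization half.
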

\begin{proof} Let $\sigma$ be $\phi(C)$.
It is a Dumont permutation : $(\sigma(2),\sigma(2n+1)) = (1,2n+2)$ and for all $i \in \{2,3,\hdots,n-1\}$, if the dot $2i = e_{i-1}$ is in the $j$-th column of $C$ (resp. if the dot $2i+1 = e_{n+1+i}$ is in the $j'$-th column of $C$), then $\sigma(2i) = \sigma(e_{i-1}) \leq 2j+1 < 2i$ because $j \leq i-1$ (resp. $\sigma(2i+1) = \sigma(e_{n+1+i}) \geq 2j' > 2i+1$ because $n+1+i \leq j'+n$).
It is also normalized according to Remark \ref{remarqueinegalitejetons}.
\end{proof}
\hspace*{-5.9mm}
\textbf{Definition of $\varphi$.} Let $\T_n$ be the set of tableaux of size $n \times 2n$ whose each row contains one dot and each column contains two dots. We define $\varphi : \D_{n+1} \rightarrow \T_n$ by mapping $\sigma \in \D_{n+1}$ to the tableau $\varphi(\sigma) \in \T_n$ whose $j$-th column contains the two dots labelled by $\sigma^{-1}(2j)$ and $\sigma^{-1}(2j+1)$ for all $j \in [n]$.

\begin{proposition} For all $\sigma \in \D_{n+1}$, the tableau $\varphi(\sigma)$ is a Dellac configuration. \end{proposition}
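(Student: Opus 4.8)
The plan is to verify the three defining properties of a Dellac configuration for the tableau $\varphi(\sigma)$: that it has width $n$ and height $2n$ with exactly one dot per row and exactly two dots per column (automatic from $\varphi(\sigma) \in \T_n$), and crucially that every dot lies between the lines $y=x$ and $y=n+x$. By the labelling convention of Definition \ref{defetiquetage}, the dot $e_i$ occupies the $i$-th row when $i \leq n$ and the $(n+i)$-th row when $i > n$; equivalently, recalling that the even labels $e_1,\dots,e_n$ are $4,6,\dots,2n+2$ and the odd labels $e_{n+1},\dots,e_{2n}$ are $1,3,\dots,2n-1$, each label determines a unique row. So the content of the geometric constraint, expressed through the labels, is exactly Remark \ref{limitationsjetons}: if $e_i$ sits in column $j_i$, then one needs $j_i \leq i \leq j_i + n$. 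I would therefore reduce the proposition to checking these two inequalities for the dots placed by $\varphi$.

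First I would translate the column assignment. By construction, column $j$ of $\varphi(\sigma)$ receives the dots labelled $\sigma^{-1}(2j)$ and $\sigma^{-1}(2j+1)$, so the label $e_i$ lands in column $j$ precisely when $\sigma(e_i) \in \{2j, 2j+1\}$, i.e.\ $j_i = \lfloor \sigma(e_i)/2 \rfloor$. The task is then to show $j_i \leq i \leq j_i + n$ for every $i \in [2n]$, which is where the Dumont conditions $\sigma(2k) < 2k$ and $\sigma(2k-1) > 2k-1$ enter. I would split into the even-label and odd-label cases. For an even label $e_i = 2i+2$ (with $i \in [n]$), the Dumont inequality $\sigma(2i+2) < 2i+2$ gives $\sigma(e_i) \leq 2i+1$, hence $j_i = \lfloor \sigma(e_i)/2\rfloor \leq i$; the reverse bound $i \leq j_i + n$ follows because $\sigma(e_i) \geq 2$ (and for the boundary labels one uses $(\sigma(2),\sigma(2n+3))$ being forced, analogous to the computation in Proposition \ref{varphiCDumont}). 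For an odd label $e_{n+i} = 2i-1$ (with $i \in [n]$), the Dumont inequality $\sigma(2i-1) > 2i-1$ gives $\sigma(e_{n+i}) \geq 2i$, hence $j_{n+i} = \lfloor \sigma(e_{n+i})/2 \rfloor \geq i$, which rearranges to $n+i \leq j_{n+i} + n$; the matching upper bound $j_{n+i} \leq n+i$ holds since $\sigma(e_{n+i}) \leq 2n+1$.

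Having established both inequalities for all $2n$ labels, Remark \ref{limitationsjetons} certifies that every dot of $\varphi(\sigma)$ lies in an admissible row relative to its column, i.e.\ between $y=x$ and $y=n+x$, so $\varphi(\sigma) \in DC(n)$. The main obstacle I anticipate is bookkeeping at the boundary columns and labels: one must check that the two endpoint values $\sigma(2)=1$ and $\sigma(2n+3)=2n+4$ (which in the size-$(n+1)$ Dumont permutation are the forced images handling the extreme dots) do not fall outside the range, and that the correspondence $j_i = \lfloor \sigma(e_i)/2\rfloor$ is consistent with how the pair $\{\sigma^{-1}(2j),\sigma^{-1}(2j+1)\}$ distributes among columns $j \in [n]$—in particular that $\sigma^{-1}(2)$ through $\sigma^{-1}(2n+1)$ exhaust exactly the $2n$ labels $e_1,\dots,e_{2n}$ while $\sigma^{-1}(1)$ and $\sigma^{-1}(2n+3)$ are absorbed by the fixed endpoints. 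Everything else is a direct application of the Dumont inequalities, so I expect the proof to be short once the indexing is pinned down.
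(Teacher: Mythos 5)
Your proof is correct and takes essentially the same route as the paper: identify the column of a dot via $\sigma(e_i)\in\{2j,2j+1\}$, then get $j\le i$ for even labels and $i\le j+n$ for odd labels from the Dumont inequalities, the remaining two bounds being trivial from $i\le n$ (resp.\ $j\le n$). One peripheral slip: the forced endpoint values are $\sigma(2)=1$ and $\sigma(2n+1)=2n+2$ (since $\sigma\in\Sig_{2n+2}$, there is no position $2n+3$), but your main bounds never actually use them, so this does not affect the argument.
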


\begin{proof}
Let $j \in [n]$ and $i \in [2n]$ such that $\varphi(\sigma)$ contains a dot in the box $(j,i)$ (\textit{i.e.}, the $j$-th column of $\varphi(\sigma)$ contains the dot $e_i$). By definition $2j \leq \sigma(e_i) \leq 2j+1$. If $i \leq n$, then $e_i = 2i+2$ and $2j \leq \sigma(2i+2) < 2i+2$ thence $j \leq i < j+n$. Else $e_i = 2(i-n)-1$ and $2j+1 \geq \sigma(2(i-n)-1) > 2(i-n)-1$ thence $j \geq i-n > 0 \geq j-n$. In either case we obtain $j \leq i \leq j+n$ so $\varphi(\sigma) \in DC(n)$.
\end{proof}

\begin{example} \label{exemplephisigma}
Consider the permutation $\sigma = 41726583 \in \D_4$. From $\sigma^{-1} = 2 \widehat{48} \widehat{16}  \widehat{53} 7$, we obtain the Dellac configuration $\varphi(\sigma)$ illustrated in Figure \ref{exemplevarphisigma}.
\end{example}

\hspace*{-5.9mm} 
It is easy to verify that $\phi \circ \varphi_{|\D_{n+1}'} = Id_{\D_{n+1}'}$ and $\varphi \circ \phi = Id_{DC(n)}$.

\begin{remark} \label{remactiondegroupe}
There is a natural interpretation in terms of group action : in the proof of Proposition \ref{Cnpolynomegenerateur}, we show that the subgroup of $\Sig_{2n+2}$ generated by the $n$ permutations $(2,3)$, $(4,5)$,~..., $(2n,2n+1)$, freely operates by left multiplication on $\D_{n+1}$, and that each orbit of that action contains exactly one normalized Dumont permutation.
Also, the orbits are indexed by elements of $DC(n)$ : two permutations $\sigma_1$ and $\sigma_2 \in \D_{n+1}$ are in the same orbit if and only if $\varphi(\sigma_1) = \varphi(\sigma_2)$, and for all $\sigma \in \D_{n+1}$, the permutation $\phi(\varphi(\sigma))$ is the unique normalized Dumont permutation in the orbit of $\sigma$.
\end{remark}

\begin{example} 
In Examples \ref{exemplevarphiC} and \ref{exemplephisigma}, we have $\varphi(\phi(C)) = C$ and $\phi(\varphi(\sigma)) = (2,3) \circ \sigma$.
\end{example}

\subsubsection{\textbf{Alternative algorithm}}

\begin{definition}
Let $(y_1,y_2,\hdots,y_{2n})$ be the sequence $(3,2,5,4,\hdots,2n+1,2n)$.
For all $C \in DC(n)$, we define the permutation $\tau_C \in \Sig_{2n}$ by $\phi(C)(e_i) = y_{\tau_C(i)}$ for all $i \in [2n]$.
\end{definition}

\begin{lemma} \label{equivinversionconfigtau}
Let $C \in DC(n)$ and $(p,q) \in [2n]^2$ such that $p<q$. Then $(e_p,e_q)$ is an inversion of $C$ if and only if $(p,q)$ is an inversion of $\tau_C$, \textit{i.e.}, if $\tau_C(p) > \tau_C(q)$.
\end{lemma}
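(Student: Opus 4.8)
The plan is to unwind the definition of $\tau_C$ and translate the inversion condition on $C$ into a condition on the values $y_{\tau_C(p)}$ and $y_{\tau_C(q)}$, exploiting the structure of the reference sequence $(y_1,y_2,\hdots,y_{2n})=(3,2,5,4,\hdots,2n+1,2n)$. Recall that $\phi(C)$ is defined via its inverse as the word $2\,\widehat{e_{i_2(1)}e_{i_1(1)}}\hdots\widehat{e_{i_2(n)}e_{i_1(n)}}\,(2n+1)$, so that for each column $j$ the lower dot $e_{i_1(j)}$ is sent by $\phi(C)$ to the odd value $2j+1$ and the upper dot $e_{i_2(j)}$ is sent to the even value $2j$. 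Since $y_{2j-1}=2j+1$ and $y_{2j}=2j$, this means $\tau_C(i_1(j))=2j-1$ and $\tau_C(i_2(j))=2j$; equivalently, reading off the columns left to right, the sequence $\tau_C(i_1(1)),\tau_C(i_2(1)),\hdots$ is exactly $1,2,3,\hdots,2n$. So $\tau_C$ records, for each dot $e_i$, the position of $e_i$ in the left-to-right, bottom-within-column reading order of the dots of $C$.

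With this interpretation in hand, the key observation is that the reference sequence is increasing \emph{across} columns: $y_{\tau}$ is strictly increasing as $\tau$ runs through $1,2,\hdots,2n$ in the sense that the value assigned grows by columns. First I would fix $p<q$ and consider two cases according to whether $e_p$ and $e_q$ lie in the same column of $C$ or in different columns. If they lie in different columns $j_p<j_q$, then $\tau_C(p)\in\{2j_p-1,2j_p\}$ while $\tau_C(q)\in\{2j_q-1,2j_q\}$, so $\tau_C(p)<\tau_C(q)$ automatically; correspondingly $(e_p,e_q)$ cannot be an inversion of $C$ because the column index of $e_p$ is strictly smaller, and an inversion requires the later dot to sit in a strictly earlier column. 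Thus in the distinct-column case neither side of the claimed equivalence holds, which is consistent. Wait — this needs care, since an inversion of $C$ is a pair of dots with $j_1<j_2$ and $i_1>i_2$, indexed by their row positions rather than by the labels $p,q$; I would reconcile the row-coordinate definition of an inversion with the label ordering $p<q$ using the labelling convention $e_i=2i+2$ (rows $1,\hdots,n$) and $e_{n+i}=2i-1$ (rows $n+1,\hdots,2n$), noting that the label index $i$ increases with the row coordinate.

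The substantive case is when $e_p$ and $e_q$ occupy the \emph{same} column $j$. Since $p<q$ and the two dots of column $j$ are $e_{i_1(j)}$ (lower, smaller row) and $e_{i_2(j)}$ (upper, larger row) with $i_1(j)<i_2(j)$, the ordering of labels forces exactly how $p,q$ match $i_1(j),i_2(j)$; here $\tau_C(i_1(j))=2j-1<2j=\tau_C(i_2(j))$. The pair $(e_p,e_q)$ being an inversion then corresponds, via the row-coordinate definition, to the lower dot having the larger label, i.e. to $e_{i_1(j)}>e_{i_2(j)}$, which by Remark \ref{remarqueinegalitejetons} happens precisely when $i_1(j)\le n<i_2(j)$. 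I expect the main obstacle to be exactly this bookkeeping: carefully matching the geometric inversion condition (column indices versus row indices) against the label ordering $p<q$ and the induced values of $\tau_C$, so that ``$(e_p,e_q)$ is an inversion'' and ``$\tau_C(p)>\tau_C(q)$'' line up in every case. Once the same-column and different-column cases are each checked against both sides of the equivalence, the lemma follows; the cleanest presentation is probably to argue that $(e_p,e_q)$ is an inversion of $C$ iff $e_p,e_q$ share a column with $e_p$ the lower dot and $e_p>e_q$ as labels, and to show this is equivalent to $\tau_C(p)>\tau_C(q)$.
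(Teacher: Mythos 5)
Your first paragraph is the right starting point, and it is exactly how the paper's proof begins: if $e_i$ lies in the $j$-th column of $C$, then $\tau_C(i)\in\{2j-1,2j\}$, with $\tau_C(i)=2j-1$ for the lower dot and $\tau_C(i)=2j$ for the upper dot. The fatal problem is your treatment of the same-column case. By Definition \ref{definitiondellacconfiguration}, an inversion of $C$ is a pair of dots whose column indices are \emph{strictly} different and whose row order is opposite to their column order; two dots lying in the same column never form an inversion, whatever their labels. Since the label index coincides with the row index, for $p<q$ the statement ``$(e_p,e_q)$ is an inversion of $C$'' means precisely $j_p>j_q$, where $j_p$ and $j_q$ are the columns of $e_p$ and $e_q$. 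Your claim that in the same-column case being an inversion ``corresponds to the lower dot having the larger label $e_{i_1(j)}>e_{i_2(j)}$'' --- and a fortiori your proposed ``cleanest presentation'' ($(e_p,e_q)$ is an inversion iff $e_p,e_q$ share a column with $e_p$ lower and $e_p>e_q$ as labels) --- substitutes a label comparison for the coordinate-based definition; Remark \ref{remarqueinegalitejetons} characterizes when the lower dot of a column carries the larger \emph{label} (mixed parities), which has nothing to do with inversions. Worse, your own computation $\tau_C(i_1(j))=2j-1<2j=\tau_C(i_2(j))$ shows that under your reading the lemma would be \emph{false}: in any column containing dots of both parities (e.g.\ every column of $C_1(n)$) you would declare an inversion while $\tau_C(p)<\tau_C(q)$.

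Moreover, the case that carries the actual content of the lemma is missing: you only examine distinct columns with $j_p<j_q$, never $j_p>j_q$. The correct analysis, which is the paper's, runs: (i) if $j_p>j_q$ (exactly the inversion case), then $\tau_C(p)\geq 2j_p-1>2j_q\geq\tau_C(q)$; (ii) conversely, if $\tau_C(p)>\tau_C(q)$, then $2j_p\geq\tau_C(p)>\tau_C(q)\geq 2j_q-1$ forces $j_p\geq j_q$, and equality $j_p=j_q$ is impossible because $p<q$ would make $e_p$ the lower and $e_q$ the upper dot of that column, giving $\tau_C(p)=2j-1<2j=\tau_C(q)$, a contradiction; hence $j_p>j_q$ and the pair is an inversion. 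The same-column situation, far from being ``substantive'', is only this trivial consistency check (no inversion, and $\tau_C(p)<\tau_C(q)$). With these corrections your argument collapses onto the paper's proof; as written, it does not establish the lemma.
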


\begin{proof} Recall that if the dot $e_i$ is located in the $j$-th column of $C$, then $\phi(C)(e_i) = 2j$ or $2j+1$. Consequently, since $y_i =i$ if $i$ is even, and $y_i = i+2$ if $i$ is odd, then $\tau_C(i)=2j$ or $2j-1$.
Now let $1 \leq p < q \leq 2n$, and let $(j_p,j_q)$ such that the dot $e_p$ (resp. $e_q$) is located in the $j_p$-th column (resp. $j_q$-th column) of $C$. If $(e_p,e_q)$ is an inversion of $C$, \textit{i.e.}, if $j_p > j_q$, then $\tau_C(p) \geq 2j_p - 1 > 2j_q \geq \tau_C(q)$ and $(p,q)$ is an inversion of $\tau_C$.
Reciprocally, if $\tau_C(p) > \tau_C(q)$, then $2j_p \geq  \tau_C(p) > \tau_C(q) \geq 2j_q-1$,
hence $j_p \geq j_q.$
Now suppose that $j_p = j_q =: j$. It means that $e_p$ and $e_q$ are the lower dot and the upper dot of the $j$-th column respectively, which translates into
$y_{\tau_C(p)} = \phi(C)(e_p) = 2j+1$ and $y_{\tau_C(q)} = \phi(C)(e_q) = 2j.$ Consequently, we obtain $\tau_C(p) = 2j-1$ and $\tau_C(q) = 2j$,
which is in contradiction with $\tau_C(p) > \tau_C(q)$.
So $j_p > j_q$ and
$(e_p,e_q)$ is an inversion of $C$.
\end{proof}

\begin{proposition}[Alternative algorithm $\phi : DC(n) \rightarrow \D_{n+1}'$] \label{lemdirect}
Let $C \in DC(n)$. For all $i \in [2n]$, we have $\tau_C(i) = i +l_C(e_i)-r_C(e_i)$.
\end{proposition}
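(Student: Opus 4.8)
The plan is to recognize that the two refined statistics $l_C(e_i)$ and $r_C(e_i)$ are nothing but inversion counts of the permutation $\tau_C$ attached to the position $i$, and then to read off the value $\tau_C(i)$ from the inversion table (Lehmer code) of $\tau_C$. Everything rests on Lemma \ref{equivinversionconfigtau}, which identifies inversions of $C$ with inversions of $\tau_C$.

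First I would rewrite $l_C(e_i)$ and $r_C(e_i)$ intrinsically in terms of $\tau_C$. By definition $l_C(e_i)$ counts the inversions of $C$ formed by $e_i$ and a dot $e_{i'}$ with $i' > i$; by Lemma \ref{equivinversionconfigtau} such a pair is an inversion of $C$ exactly when $(i,i')$ is an inversion of $\tau_C$, i.e. when $\tau_C(i) > \tau_C(i')$. Hence $l_C(e_i) = \#\{ i' > i : \tau_C(i') < \tau_C(i) \}$. Symmetrically, $r_C(e_i)$ counts inversions of $C$ formed by $e_i$ and a dot $e_{i'}$ with $i' < i$, which correspond to inversions $(i',i)$ of $\tau_C$, so $r_C(e_i) = \#\{ i' < i : \tau_C(i') > \tau_C(i) \}$.

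Next I would count, in two ways, the indices $i' \in [2n]$ with $\tau_C(i') < \tau_C(i)$. Since $\tau_C \in \Sig_{2n}$, there are exactly $\tau_C(i) - 1$ such indices. Splitting according to whether $i' > i$ or $i' < i$, the first block has size $l_C(e_i)$ by the previous step, while the second block has size $(i-1) - r_C(e_i)$, because among the $i-1$ positions $i' < i$ precisely $r_C(e_i)$ carry a value larger than $\tau_C(i)$ and the remaining ones carry a smaller value. Adding the two contributions gives $\tau_C(i) - 1 = l_C(e_i) + (i-1) - r_C(e_i)$, which is exactly the claimed identity $\tau_C(i) = i + l_C(e_i) - r_C(e_i)$.

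There is no real obstacle here once Lemma \ref{equivinversionconfigtau} is in hand: the argument is the classical relation between a permutation's one-line value, its position, and the inversions incident to it. The only point demanding care is the orientation bookkeeping --- matching ``higher dot to the left'' inversions of $C$ with ``larger value earlier'' inversions of $\tau_C$, and keeping the roles of $i' > i$ (feeding $l_C$) and $i' < i$ (feeding $r_C$) straight --- but this is precisely what Lemma \ref{equivinversionconfigtau} already encodes, so I would simply cite it rather than re-derive the correspondence.
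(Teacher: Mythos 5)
Your proof is correct and takes essentially the same route as the paper: both use Lemma \ref{equivinversionconfigtau} to rewrite $l_C(e_i)$ and $r_C(e_i)$ as the inversion counts $|\{k > i : \tau_C(k) < \tau_C(i)\}|$ and $|\{k < i : \tau_C(k) > \tau_C(i)\}|$, and then apply the classical identity $\pi(i) = i + |\{k > i : \pi(k) < \pi(i)\}| - |\{k < i : \pi(k) > \pi(i)\}|$. The only difference is that you derive this identity by double-counting the positions carrying values below $\tau_C(i)$, whereas the paper cites it as well known --- a harmless (indeed welcome) addition.
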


\begin{example}
Consider the following Dellac configuration $C \in DC(3)$.
$$\includegraphics[width=2.5cm]{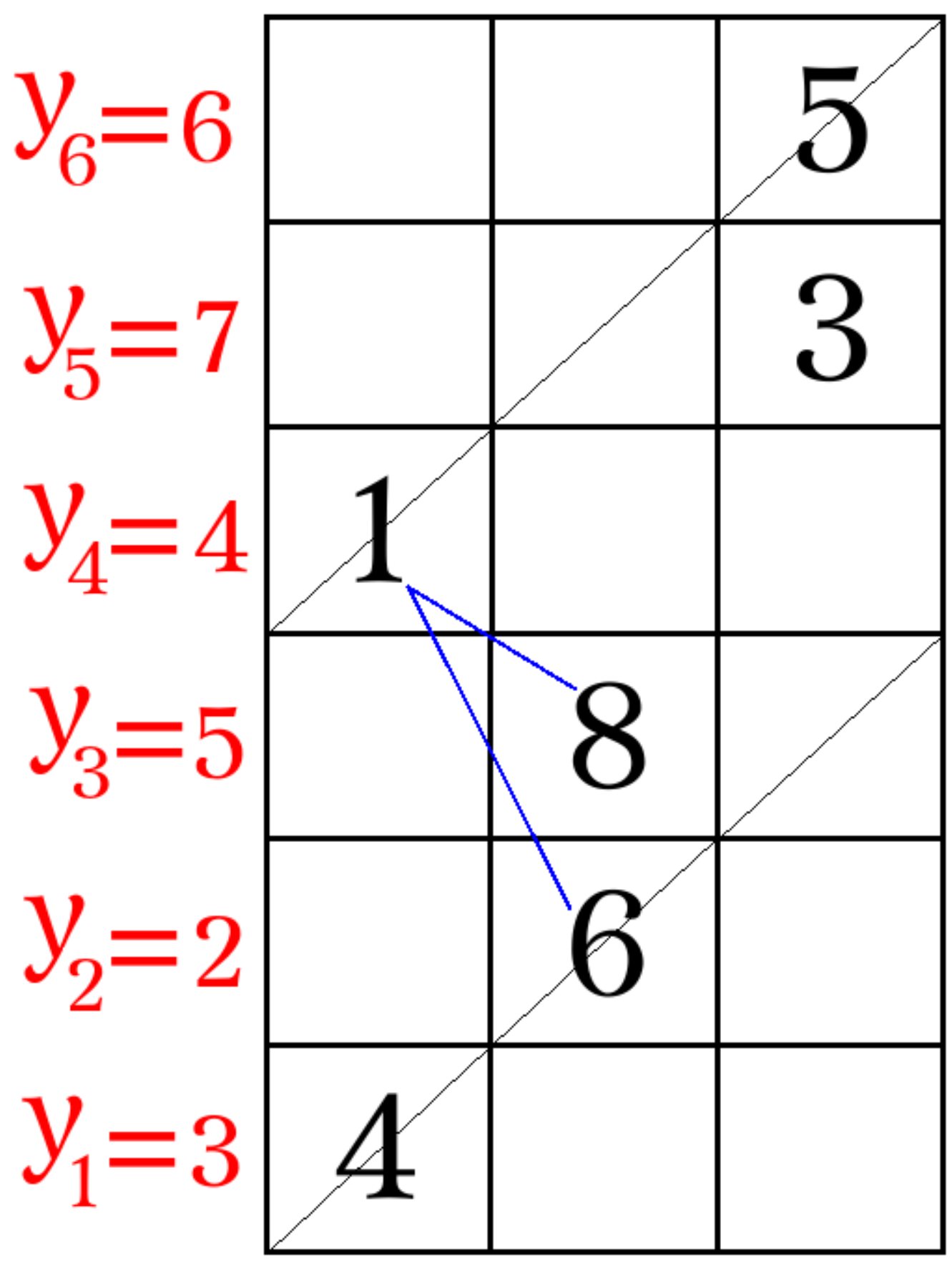}$$
By Proposition  \ref{lemdirect}, we obtain immediatly $\phi(C) = 21736584$. This is coherent with the algorithm given in Definition \ref{definitionphiC}, which says that $\phi(C)^{-1} = 2 \widehat{14}  \widehat{86}  \widehat{53} 7$.
\end{example}

\hspace*{-5.9mm}
\textbf{Proof of Lemma \ref{lemdirect}.}
>From Lemma \ref{equivinversionconfigtau}, we know that
$$\begin{cases} l_C(e_i) \hspace*{0.5mm} = | \{k > i \ | \ \tau_C(k) < \tau_C(i) \}|,\\
r_C(e_i) = |\{k < i \ | \ \tau_C(k) > \tau_C(i) \}|. \end{cases}$$
So, the lemma follows from the well-known equality
$$\pi(i) = i + | \{k > i \ | \ \pi(k) < \pi(i) \}| - |\{k < i \ | \ \pi(k) > \pi(i) \}|$$
for all permutation $\pi \in \Sig_m$ and for all integer $m \geq 1$. \hfill $\qed$

\subsubsection{\textbf{Switchability and Dumont permutations}}

We have built a bijection $\phi : DC(n) \rightarrow \D_{n+1}'$. To demonstrate Formula \ref{equationstatisticpresetionconfigdumont}, we will use the notion of switchability defined in \S \ref{sec:preliminaries}, by showing that if Formula \ref{equationstatisticpresetionconfigdumont} is true for some particuliar configuration $C^0$, and if $C^1$ is a configuration connected to $C^0$ by a switching transformation, then Formula \ref{equationstatisticpresetionconfigdumont} is also true for $C^1$. We will also need Lemma \ref{lemswitch1} and Proposition \ref{lemswitch2} to prove (in Proposition \ref{iterswitch}) that any two Dellac configurations are connected by a sequence of switching transformations.

\begin{lemma} \label{lemswitch1}
Let $\sigma \in \D_{n+1}$ and $i \in [2n-1]$. We denote by $\sigma'$ the composition $\sigma \circ (e_i,e_{i+1})$ of the transposition $(e_i,e_{i+1})$ with the permutation $\sigma$.
The Dellac configuration $\varphi(\sigma)$ is switchable at $i$ if and only if $\sigma'$ is still a Dumont permutation, and in that case $\varphi(\sigma') = Sw^i(\varphi(\sigma))$.
\end{lemma}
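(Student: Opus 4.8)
The plan is to analyze how the transposition $(e_i, e_{i+1})$ acting on $\sigma$ translates into the tableau $\varphi(\sigma)$, and to match this against the switchability criterion of Fact \ref{factswicthcond}. Recall that $\varphi(\sigma)$ places the dots $\sigma^{-1}(2j)$ and $\sigma^{-1}(2j+1)$ in column $j$. Composing on the right by the transposition $(e_i,e_{i+1})$ means exactly that $\sigma'$ swaps the values $\sigma$ assigns to the two labels $e_i$ and $e_{i+1}$; equivalently, $(\sigma')^{-1}$ is obtained from $\sigma^{-1}$ by replacing each occurrence of $e_i$ by $e_{i+1}$ and vice versa. First I would observe that this is precisely the operation of exchanging the dots $e_i$ and $e_{i+1}$ between their columns, so that as a raw tableau $\varphi(\sigma') = Sw^i(\varphi(\sigma))$ holds unconditionally — the identity of the underlying dot-placement is forced by construction. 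The content of the lemma is therefore the equivalence between $\sigma'$ being a Dumont permutation and $Sw^i(\varphi(\sigma))$ being a genuine Dellac configuration.

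Next I would prove the two directions of that equivalence by translating both conditions into statements about the column indices of $e_i$ and $e_{i+1}$. Let $j_i$ and $j_{i+1}$ denote the columns of $e_i$ and $e_{i+1}$ in $\varphi(\sigma)$. The key point is that $\sigma'$ fails to be Dumont exactly when the swap pushes some value across the parity barrier defining Dumont permutations: one must check that the condition $\sigma'(2k) < 2k$ and $\sigma'(2k-1) > 2k-1$ can only be violated at the positions whose labels are $e_i$ or $e_{i+1}$. Since $e_i$ and $e_{i+1}$ are consecutive in the labeling, I would split into the cases $i \le n$ (both even dots or straddling the even/odd boundary) and $i > n$ (odd dots), mirroring the two clauses of Fact \ref{factswicthcond}. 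In each case the Dumont inequalities for $\sigma'$ reduce to the column inequalities $j_{i+1} < i+1$ (when $i \le n$) or $j_i > i-n$ (when $i > n$), which are exactly the switchability conditions. This is the crux and the step I expect to require the most care: one must verify that the swap does not break the Dumont property at any \emph{other} position, which follows because the values $\sigma'$ assigns elsewhere coincide with those of $\sigma$, and that the Dumont constraint at the two affected positions is governed solely by these column bounds coming from Remark \ref{limitationsjetons}.

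The main obstacle will be bookkeeping the parity and the even/odd nature of the labels $e_i, e_{i+1}$ relative to the threshold $n$, since the labeling $e_k = 2k+2$ for $k \le n$ and $e_{n+k} = 2k-1$ jumps in value and changes parity at $k = n$. I would handle the boundary case $i = n$ (where $e_n = 2n+2$ is the largest even dot and $e_{n+1} = 1$ is the smallest odd dot) separately, noting that Fact \ref{facttoujoursenn} guarantees switchability there, and checking directly that the corresponding $\sigma'$ remains Dumont. Once both directions of the equivalence are established and the unconditional identity $\varphi(\sigma') = Sw^i(\varphi(\sigma))$ is in hand, the lemma follows immediately, since $\varphi(\sigma')$ is a Dellac configuration precisely when $Sw^i(\varphi(\sigma))$ is one, which is the definition of $\varphi(\sigma)$ being switchable at $i$.
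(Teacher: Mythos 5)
Your proposal is correct and takes essentially the same route as the paper: the identity $\varphi(\sigma') = Sw^i(\varphi(\sigma))$ comes from the same column-swap computation (the paper performs it under the Dumont hypothesis; you observe it holds for raw tableaux unconditionally), and the equivalence is settled by matching the Dumont inequalities at the two affected positions against the column bounds of Fact \ref{factswicthcond} --- precisely the verification the paper compresses into ``one can check''. The only difference is organizational: the paper invokes the earlier proposition that $\varphi(\D_{n+1}) \subset DC(n)$ for the converse direction, whereas you rederive the corresponding inequalities directly, including the boundary case $i=n$.
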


\begin{proof}
Let $T$ be the tableau $Sw^i(\varphi(\sigma))$. If $T$ is a Dellac configuration, one can check that $\sigma' \in \D_{n+1}$ thanks to Fact \ref{factswicthcond}.
Reciprocally, if $\sigma'$ is a Dumont permutation, we may consider the Dellac configuration $\varphi(\sigma')$. For all $j \in [n]$, let $\left( e_{i_1(j)}, e_{i_2(j)} \right)$ (with $i_1(j) < i_2(j)$) be the two dots of the $j$-th column of $\varphi(\sigma)$, and $\left( e_{i_1'(j)}, e_{i_2'(j)} \right)$ (with $i_1'(j) < i_2'(j)$) the two dots of the $j$-th column of $\varphi(\sigma')$. 
Then $e_{i_1'(j)} = \sigma'^{-1}(2j+1) = (e_i,e_{i+1}) \circ \sigma^{-1}(2j+1) = (e_i,e_{i+1}) \left( e_{i_1(j)}  \right)$ and $e_{i_2'(j)} = \sigma'^{-1}(2j) = (e_i,e_{i+1}) \circ \sigma^{-1}(2j) = (e_i,e_{i+1}) \left( e_{i_2(j)} \right)$
for all $j$, which exactly translates into $\varphi(\sigma') = Sw^i(\varphi(\sigma)) = T$.
\end{proof}

\hspace*{-5.9mm} 
The following result is easy.

\begin{proposition} \label{lemswitch2} In the setting of Lemma \ref{lemswitch1}, if $\varphi(\sigma)$ is switchable at $i$, then the following propositions are equivalent.
\begin{enumerate}
\item $\varphi(\sigma') \neq \varphi(\sigma)$;
\item the two dots $e_i$ and $e_{i+1}$ are not in the same column of $\varphi(\sigma)$;
\item $\text{inv}(\varphi(\sigma')) - \text{inv}(\varphi(\sigma)) = \pm 1$;
\item $\phi(\varphi(\sigma)) \circ (e_i,e_{i+1}) \in \D_{n+1}'$;
\item $\phi(\varphi(\sigma')) = \phi(\varphi(\sigma)) \circ (e_i,e_{i+1})$.
\end{enumerate}
\end{proposition}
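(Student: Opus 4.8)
The plan is to fix $C := \varphi(\sigma)$ (switchable at $i$ by hypothesis) and to pivot all five statements on condition $(2)$, establishing $(1)\Leftrightarrow(2)\Leftrightarrow(3)$ on the configuration side and $(2)\Leftrightarrow(4)\Leftrightarrow(5)$ on the permutation side.

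For the configuration side I would first recall from Lemma \ref{lemswitch1} that $\varphi(\sigma') = Sw^i(C)$. Then $(1)\Leftrightarrow(2)$ is immediate from Fact \ref{factmemecolonne}: if $e_i,e_{i+1}$ share a column then $Sw^i(C)=C$, whereas if they lie in distinct columns the switching genuinely exchanges them and $Sw^i(C)\neq C$. For $(2)\Leftrightarrow(3)$, the same-column case gives $\text{inv}(Sw^i(C))=\text{inv}(C)$ (difference $0$), while in the distinct-column case exactly one of $C$ and $Sw^i(C)$ has $\{e_i,e_{i+1}\}$ as an inversion — the higher dot $e_{i+1}$ occupies the smaller column in precisely one of them — so Fact \ref{factinversion} together with the involutivity of $Sw^i$ (Fact \ref{factinvolution}) forces $\text{inv}(Sw^i(C))-\text{inv}(C)=\pm 1$. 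Concretely, every dot $e_k$ with $k\notin\{i,i+1\}$ sits in a row below both or above both of rows $i,i+1$, so its inversion contributions with $e_i$ and with $e_{i+1}$ are merely exchanged by the switch and the total is unchanged; only the status of the pair $\{e_i,e_{i+1}\}$ flips.

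The key device for the permutation side is to apply Lemma \ref{lemswitch1} not to $\sigma$ but to $\rho := \phi(\varphi(\sigma)) = \phi(C)$, which is legitimate since $\rho \in \D_{n+1}'\subseteq\D_{n+1}$ and $\varphi(\rho)=C$ is switchable at $i$. Writing $\rho' := \rho\circ(e_i,e_{i+1})$, Lemma \ref{lemswitch1} yields that $\rho'$ is a Dumont permutation with $\varphi(\rho')=Sw^i(C)=\varphi(\sigma')$. Since $\phi$ and $\varphi$ are mutually inverse on the relevant sets, $\rho'$ is normalized if and only if $\rho'=\phi(\varphi(\rho'))=\phi(\varphi(\sigma'))$; as the right-hand side of $(5)$ is exactly $\rho'$, this shows $(5)\Leftrightarrow[\rho'\in\D_{n+1}']=(4)$, the forward implication using Proposition \ref{varphiCDumont}.

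It remains to tie $(4)$ to $(2)$, and I expect this last bookkeeping step to be the main obstacle. The permutation $\rho'$ is normalized precisely when, at every column $j$, it respects the ordering convention of $\phi$ (upper dot read as $(\rho')^{-1}(2j)$, lower dot as $(\rho')^{-1}(2j+1)$), which by Remark \ref{remarqueinegalitejetons} is automatic at every column untouched by $(e_i,e_{i+1})$. At the touched column(s): if $e_i,e_{i+1}$ share a column (negation of $(2)$) the transposition flips the upper/lower roles there, so the normalized condition fails and $\rho'\notin\D_{n+1}'$; whereas if $e_i,e_{i+1}$ lie in distinct columns (condition $(2)$), then the partner dot $e_s$ in each affected column has index $s\notin\{i,i+1\}$, and comparing $i$ and $i+1$ against $s$ shows that replacing $e_i$ by $e_{i+1}$ (resp. $e_{i+1}$ by $e_i$) leaves the same dot on top, so the convention is preserved and $\rho'$ is normalized. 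The delicacy is precisely that $\rho=\phi(\varphi(\sigma))$ need not equal $\sigma$, so one cannot read off normalization from $\sigma$ directly; everything else follows routinely from Facts \ref{facswitchinv}--\ref{factinversion} and Remark \ref{remarqueinegalitejetons}. This gives $(2)\Leftrightarrow(4)\Leftrightarrow(5)$ and completes the chain of equivalences.
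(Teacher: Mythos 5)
Your proposal is correct. Note, however, that the paper offers no proof of this proposition at all — it is introduced with the single phrase ``The following result is easy'' — so there is no argument of the author's to compare yours against; what you have written is a legitimate filling-in of the omitted verification. The structure you chose (pivoting everything on condition $(2)$, with $(1)\Leftrightarrow(2)\Leftrightarrow(3)$ handled purely on the configuration side and $(2)\Leftrightarrow(4)\Leftrightarrow(5)$ on the permutation side) is the natural one, and the two places where real care is needed are exactly the ones you flagged. First, replacing $\sigma$ by $\rho=\phi(\varphi(\sigma))$ before applying Lemma \ref{lemswitch1} is essential, since $\sigma$ itself need not be normalized and statement $(4)$ is about $\rho$, not $\sigma$; your observation that $\varphi(\rho)=C$ is again switchable at $i$, so that $\varphi(\rho\circ(e_i,e_{i+1}))=Sw^i(C)=\varphi(\sigma')$, is what makes $(4)\Leftrightarrow(5)$ a one-line consequence of $\phi\circ\varphi_{|\D_{n+1}'}=Id$ and Proposition \ref{varphiCDumont}. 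Second, your identification of ``$\rho'$ normalized'' with ``$\rho'$ reads each column in the upper-dot-at-$2j$, lower-dot-at-$(2j+1)$ convention'' is the right use of Remark \ref{remarqueinegalitejetons}: by that remark the normalization biconditional at column $j$ holds when the convention is respected and is logically impossible when it is flipped, so $\rho'\in\D_{n+1}'$ iff $\rho'=\phi(Sw^i(C))$; the same-column case flips exactly one column's convention (failure), while in the distinct-column case the partner indices $s\notin\{i,i+1\}$ satisfy $s<i\Leftrightarrow s<i+1$, so the upper/lower roles are preserved (success). The inversion count in $(2)\Rightarrow(3)$ — all pairs involving a third dot have their contributions exchanged, and only the status of $\{e_i,e_{i+1}\}$ flips — is also sound. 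I see no gap.
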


\begin{proposition} \label{iterswitch}
Let $(C_1,C_2) \in DC(n)^2$. There exists a finite sequence of switching transformations from $C_1$ to $C_2$, \textit{i.e.}, a sequence $(C^0, C^1, \hdots, C^m)$ in $DC(n)$ for some $m \geq 0$ such that $(C^0,C^m) = (C_1,C_2)$ and such that $C^{k} = Sw^{i_{k-1}}(C^{k-1})$ for some index $i_{k-1} \in [2n]$, for all $k \in [m]$.
\end{proposition}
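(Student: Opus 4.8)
The plan is to show that any Dellac configuration can be connected to the canonical configuration $C_0(n)$ by a sequence of switchings; since switchability is an involution (Fact \ref{factinvolution}), connecting both $C_1$ and $C_2$ to $C_0(n)$ and concatenating one sequence with the reverse of the other yields a sequence from $C_1$ to $C_2$. Thus it suffices to prove that every $C \in DC(n)$ can be transformed into $C_0(n)$ by switchings. Recall that $C_0(n)$ is the unique configuration with $0$ inversions, characterized by $(e_{i_1(j)}, e_{i_2(j)}) = (e_{2j-1}, e_{2j})$ for every column $j$.

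The strategy is to argue by induction on $\text{inv}(C)$, decreasing the number of inversions one step at a time. First I would show that if $C \neq C_0(n)$, then $\text{inv}(C) > 0$, and moreover there exists an index $i \in [2n-1]$ such that $(e_i, e_{i+1})$ is an inversion of $C$. This is the standard fact that a configuration with a positive number of inversions must have an inversion between two \emph{consecutive} labels $e_i$ and $e_{i+1}$: if no consecutive pair were inverted, then reading the labels in the order they appear (column by column, lower dot before upper dot) would be increasing, which forces $C = C_0(n)$. Once such an $i$ is located, Fact \ref{factinversion} guarantees that $C$ is switchable at $i$ and that $\text{inv}(Sw^i(C)) = \text{inv}(C) - 1$. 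Applying switchings repeatedly drives the inversion count down to $0$, at which point the configuration is $C_0(n)$, completing the induction.

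Assembling the full sequence is then routine bookkeeping: let $(C, Sw^{i_0}(C), \ldots, C_0(n))$ be the descending sequence for $C_1$, and $(C_2, \ldots, C_0(n))$ the one for $C_2$; by Fact \ref{factinvolution} each switching step is reversible, so reversing the second sequence gives a path $C_0(n) \to C_2$, and concatenation produces the required finite sequence $(C^0, \ldots, C^m)$ from $C_1$ to $C_2$ with each $C^k = Sw^{i_{k-1}}(C^{k-1})$.

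The main obstacle is establishing the existence of a consecutive inverted pair $(e_i, e_{i+1})$ whenever $\text{inv}(C) > 0$. One must rule out the degenerate possibility that all inversions involve labels that are far apart in index while no two adjacent labels are inverted. The cleanest route is contrapositive: assume $\tau_C(i) < \tau_C(i+1)$ for all $i$, i.e.\ $\tau_C$ is the identity permutation, and deduce via Lemma \ref{equivinversionconfigtau} that $C$ has no inversions at all, hence $C = C_0(n)$. This reduces the geometric claim about configurations to the elementary fact that a permutation with a descent somewhere must have an adjacent descent — indeed, if $\tau_C \neq \text{id}$ there is some $i$ with $\tau_C(i) > \tau_C(i+1)$, and then by Lemma \ref{equivinversionconfigtau} the pair $(e_i, e_{i+1})$ is an inversion of $C$, giving exactly the consecutive inversion needed to apply Fact \ref{factinversion}.
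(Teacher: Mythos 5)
Your proof is correct and follows essentially the same route as the paper: reduce to connecting an arbitrary $C$ to $C_0(n)$ using the reversibility of switchings (Fact \ref{factinvolution}), locate a consecutive inversion $(e_i,e_{i+1})$ as a descent of $\tau_C$ via Lemma \ref{equivinversionconfigtau}, and switch there to strictly decrease the inversion count. The only cosmetic difference is the termination bookkeeping: you induct on $\text{inv}(C)$ via Fact \ref{factinversion}, whereas the paper tracks $\tau_{C^{k+1}} = \tau_{C^k}\circ(i_k,i_k+1)$ through Proposition \ref{lemswitch2} and iterates descent removal until $\tau_{C^m} = Id$; your measure makes the halting argument slightly more explicit, but the underlying mechanism is identical.
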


\begin{proof} From Fact \ref{factinvolution}, it is sufficient to prove that for all $C \in DC(n)$, there exists a finite sequence of switching transformations from $C$ to $C_0(n)$, the unique Dellac configuration of size $n$ with $0$ inversion (see Definition \ref{Sigma0}).
If $C = C_0(n)$, the statement is obvious. Else, let $C^0 = C$.
>From Lemma \ref{equivinversionconfigtau}, for all $i \in [2n]$, the pair $(e_i,e_{i+1})$ is an inversion of $C^0$ if and only if the integer $i$ is a descent of $\tau_{C^0}$, \textit{i.e.}, if $\tau_{C^0}(i) > \tau_{C^0}(i+1)$.
Now, from Proposition \ref{lemdirect}, the permutation $\tau_{C_0(n)}$ is the identity map $Id$ of $\Sig_{2n+2}$. Consequently, since $C^0 \neq C_0(n)$, we have $\tau_{C^0} \neq Id_{\Sig_{2n}}$, so $\tau_{C^0}$ has at least one descent.
Let $i_0$ be one of those descents, and let $C^1 = Sw^{i_0}(C^0) \in DC(n).$
Since $(e_{i_0},e_{i_0+1})$ is an inversion of $C^0$, in particular $e_{i_0}$ and $e_{i_0+1}$ are not in the same column, so, from Proposition \ref{lemswitch2}, we have $\phi(C^1) = \phi(C^0) \circ (e_{i_0}, e_{i_0+1})$,
hence
$\tau_{C^1} = \tau_{C^0} \circ (i_0,i_0+1).$
Consequently, since $i_0$ is a descent of $\tau_{C^0}$, it is not a  descent of $\tau_{C^1}$.
Iterating the process with $C^1$, and by induction, we build a finite sequence of switching transformations $(C^0,C^1, \hdots, C^m)$ such that $\tau_{C^m}$ has no descent, \textit{i.e.}, such that $\tau_{C^m} = Id = \tau_{C_0(n)}$, which implies $C^m = C_0(n)$.
\end{proof}

\begin{example} 
In Figure \ref{diagrammeswitching}, we give a graph whose vertices are the $h_3 = 7$ elements of $DC(3)$, and in which two Dellac configurations are connected by an edge if they are connected by a switching transformation.
\begin{figure}[!h] \center
\includegraphics[width=7.2cm]{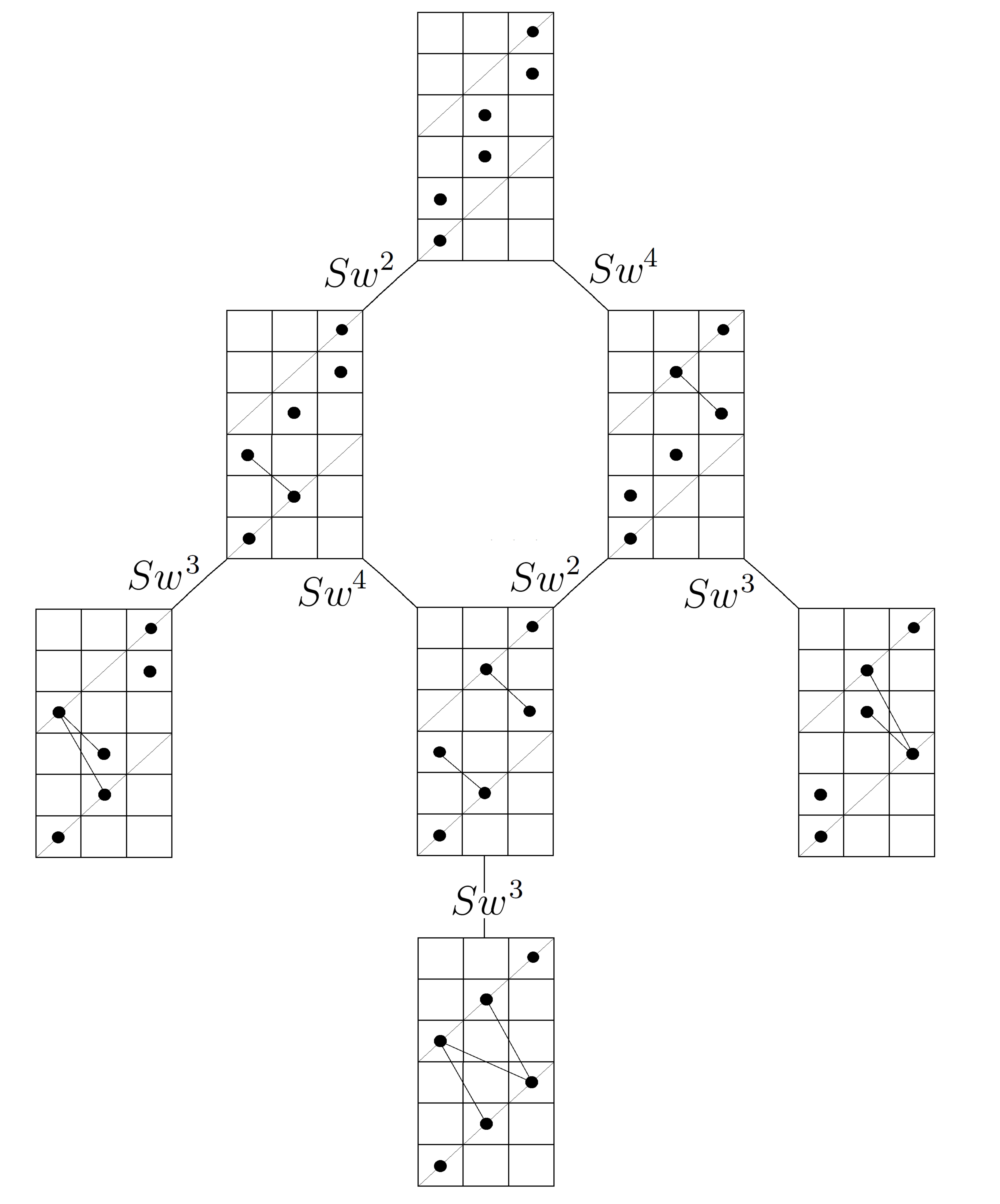}
\caption{The switching transformations of $DC(3)$.}
\label{diagrammeswitching}
\end{figure}
\end{example}

\subsubsection{\textbf{Proof of the statistic preservation formula (\ref{equationstatisticpresetionconfigdumont})}}

We are going to prove that Formula (\ref{equationstatisticpresetionconfigdumont}) is true for all $C \in DC(n)$, which will achieve the proof of Theorem \ref{bijectionconfigdumont}. First notice that it is true for $C = C_1(n)$, the unique Dellac configuration with 
$\binom{n}{2}$ inversions (see Definition \ref{Sigma0}): indeed $\phi(C_1(n))$ is the involution $214365 \hdots (2n+2) (2n+1)$, consequently the two permutations $\phi(C_1(n))^e = 135\hdots (2n+1)$ and $\phi(C_1(n))^o = 246\hdots (2n+2)$ have no inversion, hence $$st(\phi(C_1(n))) = (n+1)^2 - (1 + 3 + 5 + \hdots + (2n+1)) = 0.$$
Let $C \in DC(n)$. From Lemma \ref{iterswitch}, there exists a finite sequence of switching transformations $(C^0, C^1, \hdots, C^m)$ from $C^0 = C_1(n)$ to $C^m = C$.
For all $k \in \{0,1,\hdots,m-1\}$, let $i_{k} \in [2n]$ such that $C^{k+1} = Sw^{i_{k}}(C^{k})$.
We can suppose that $C_{k+1} \neq C_k$, \textit{i.e.}, that $\text{inv}(C^{k+1}) = \text{inv}(C^k) \pm 1.$
Since Formula (\ref{equationstatisticpresetionconfigdumont}) is true for $C_1(n)$, it will be true for $C$ by induction if we show that 
$$st(\phi(C^{k+1})) -st(\phi(C^k)) = \text{inv}(C^{k}) - \text{inv}(C^{k+1})$$ for all $k$.
We know that the quantity $\text{inv}(C^{k}) - \text{inv}(C^{k+1})$ equals $\pm 1$. From Fact \ref{factinvolution}, we have $Sw^{i_k}(C^{k+1}) = C^k$. Then, provided that $C^k$ is replaced by $Sw^{i_k}(C^k) = C^{k+1}$, we can assume that the quantity $\text{inv}(C^{k}) - \text{inv}(C^{k+1})$ equals $1$, which means the pair $(e_{i_k},e_{i_{k+1}})$ is an inversion of $C^k$. Consequently, to achieve the proof of Theorem \ref{bijectionconfigdumont}, it suffices to prove the equality
\begin{equation} \label{equalitytofinishtheproof}
st(\phi(C^{k+1})) -st(\phi(C^k)) = 1
\end{equation}
under the hypothesis $\text{inv}(C^{k}) - \text{inv}(C^{k+1}) = 1$.
Let $\sigma_k = \phi(C^k)$ and $\sigma_{k+1} = \phi(C^{k+1})$. Since $e_{i_k}$ and $e_{i_k+1}$ are not in the same column of $C^k$, we have $\sigma_{k+1} = \sigma_k \circ (e_{i_k},e_{i_k+1})$ in view of Proposition \ref{lemswitch2}.
\\ \ \\
$(a)$ If $e_{i_k}$ and $e_{i_k+1}$ have the same parity (which is always true except for $i_k = n$), then the two integers $e_{i_k}$ and $e_{i_k+1}$ appear in the same subset $\{1,3,\hdots,2n+1\}$ or $\{2,4,\hdots,2n+~2\}$. Consequently, we obtain the two equalities 
\begin{align*}
\sum_{i=1}^{n+1} \sigma_{k+1}(2i) &= \sum_{i=1}^{n+1} \sigma_{k}(2i),\\
(\text{inv}(\sigma_{k+1}^e) - \text{inv}(\sigma_{k}^e), \text{inv}(\sigma_{k+1}^o) - \text{inv}(\sigma_{k}^o)) &= (-1,0) \text{ or $(0,-1)$},
\end{align*} 
thence $st(\sigma_{k+1}) = st(\sigma_k) + 1$, which brings Equality (\ref{equalitytofinishtheproof}).
\\ \ \\
$(b)$ Else $i_k = n$ and $(e_{i_k},e_{i_k+1}) = (2n+2,1)$.
>From $\sigma_{k+1} = \sigma_k \circ (e_{i_k},e_{i_k+1})$, we obtain
\begin{align*}
\sigma_{k+1}^e & = \sigma_k(2) \sigma_k(4) \hdots \sigma_k(2n) \sigma_k(1),\\
\sigma_{k+1}^o & = \sigma_k(2n+2) \sigma_k(3) \sigma_k(5) \hdots \sigma_k(2n+1).
\end{align*}
This provides the three following equations.
\begin{equation} \label{formulesdenertsigmak+1a}
\sum_{i=1}^{n+1} \sigma_{k+1}(2i) =  \left( \sum_{i=1}^{n+1} \sigma_{k}(2i) \right) - \sigma_k(2n+2) + \sigma_k(1),
\end{equation}
\begin{multline} \label{formulesdenertsigmak+1b}
\text{inv}(\sigma_{k+1}^e) =  \text{inv}(\sigma_k^e)
-|\{2i < 2n+2 \ | \ \sigma_k(2i) > \sigma_k(2n+2) \}|\\+ |\{2i < 2n+2 \ | \ \sigma_k(2i) > \sigma_k(1) \}|,
\end{multline}
\begin{multline} \label{formulesdenertsigmak+1c}
\text{inv}(\sigma_{k+1}^o) =  \text{inv}(\sigma_k^o)
-|\{ 1 < 2i+1 \ | \sigma_k(2i+1) < \sigma_k(1) \}|\\+|\{1 < 2i+1 \ | \ \sigma_k(2i+1) < \sigma_k(2n+2) \}|.
\end{multline}

We need the following lemma to explicit Equalities (\ref{formulesdenertsigmak+1b}) and (\ref{formulesdenertsigmak+1c}).

\begin{lemma} \label{explicitationcardinaux}
We have the equalities
\begin{align}
\label{sigma2isuperieursigma2n+2} |\{2i < 2n+2 \ | \ \sigma_k(2i) > \sigma_k(2n+2) \}| &=r_{C^k}(2n+2) +\left(1 + (-1)^{\sigma_k(2n+2)}\right)/2,\\
\label{sigma2isuperieursigma1} |\{2i < 2n+2 \ | \ \sigma_k(2i) > \sigma_k(1) \}| &=r_{C^k}(1) - \left(1 - (-1)^{\sigma_k(1)}\right)/2,\\
\label{sigma2i+1inferieursigma1} |\{ 1 < 2i+1 \ | \sigma_k(2i+1) < \sigma_k(1) \}| &= l_{C^k}(1) + \left(1-(-1)^{\sigma_k(1)}\right)/2,\\
\label{sigma2i+1inferieursigma2n+2} |\{1 < 2i+1 \ | \ \sigma_k(2i+1) < \sigma_k(2n+2) \}| &= l_{C^k}(2n+2) - \left(1+(-1)^{\sigma_k(2n+2)}\right)/2.
\end{align}
\end{lemma}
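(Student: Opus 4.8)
The plan is to translate each of the four cardinalities into a count of dots of $C^k$ lying in a prescribed column range, and then to invoke Lemma \ref{equivinversionconfigtau} to recognize those counts as the statistics $l_{C^k}$ and $r_{C^k}$, up to a correction term that will turn out to be exactly the displayed parity term $(1\pm(-1)^{\cdots})/2$.

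First I would record the two facts that drive everything. If a dot $e_i$ lies in the $j$-th column of $C^k$, then $\sigma_k=\phi(C^k)$ satisfies $\sigma_k(e_i)=2j$ when $e_i$ is the upper dot of its column and $\sigma_k(e_i)=2j+1$ when it is the lower dot; in particular $\sigma_k(e_i)$ is even if and only if $e_i$ is the upper dot. Hence $(1+(-1)^{\sigma_k(2n+2)})/2$ is the indicator that $e_n=2n+2$ is an upper dot, and $(1-(-1)^{\sigma_k(1)})/2$ is the indicator that $e_{n+1}=1$ is a lower dot. I would also dispose of the two non-dot values $2$ and $2n+1$: since $\sigma_k^{-1}(1)=2$ and $\sigma_k^{-1}(2n+2)=2n+1$, we have $\sigma_k(2)=1$ and $\sigma_k(2n+1)=2n+2$, the extreme values, so neither ever satisfies the strict inequalities defining the four sets. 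Each cardinality thus reduces to a count over genuine dots $e_{i'}$, namely the even dots $e_1,\hdots,e_{n-1}$ for the first two equalities and the odd dots $e_{n+2},\hdots,e_{2n}$ for the last two.

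The core observation is an order comparison: for two dots $e_a,e_b$ in columns $j_a,j_b$, the memberships $\sigma_k(e_a)\in\{2j_a,2j_a+1\}$ and $\sigma_k(e_b)\in\{2j_b,2j_b+1\}$ force $\sigma_k(e_a)>\sigma_k(e_b)$ whenever $j_a>j_b$ and $\sigma_k(e_a)<\sigma_k(e_b)$ whenever $j_a<j_b$; the only delicate case is $j_a=j_b$, where the comparison is decided by which of the two is the upper dot. Combining this trichotomy with Lemma \ref{equivinversionconfigtau}, which identifies inversions of $C^k$ with the column order, I would match each cardinality to the corresponding $l_{C^k}$ or $r_{C^k}$ value, the discrepancy being precisely the contribution of the shared-column case. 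For instance, in Equality (\ref{sigma2isuperieursigma2n+2}) the count $|\{i'<n:\sigma_k(e_{i'})>\sigma_k(2n+2)\}|$ equals $r_{C^k}(2n+2)$ plus one extra unit exactly when $e_n$ is the upper dot of its column (its lower column-partner then contributing an index $i'<n$), which is the asserted parity term; similarly Equality (\ref{sigma2i+1inferieursigma1}) matches $l_{C^k}(1)$ plus the indicator that $e_{n+1}$ is a lower dot.

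The two remaining equalities (\ref{sigma2isuperieursigma1}) and (\ref{sigma2i+1inferieursigma2n+2}) are the harder ones, because there the range of the cardinality and the range of the statistic ($i'<n+1$ versus $i'<n$, or $i'>n$ versus $i'>n+1$) differ by the single index attached to the partner dot among $\{e_n,e_{n+1}\}$. Here I would use the standing hypothesis of case $(b)$, namely that $(e_n,e_{n+1})=(2n+2,1)$ is an inversion of $C^k$, equivalently that $e_n$ sits in a strictly later column than $e_{n+1}$; this forces $e_n$ and $e_{n+1}$ into distinct columns and lets me combine the off-by-one index with the shared-column correction, again yielding exactly the stated parity term. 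The main obstacle will be this bookkeeping: tracking the shared-column tie-breaks together with the boundary indices $i'=n,n+1$ so that the $\pm(1\pm(-1)^{\cdots})/2$ corrections emerge with the correct sign, the inversion hypothesis being precisely what resolves the interaction between the two switched dots $e_n$ and $e_{n+1}$.
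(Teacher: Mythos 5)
Your proposal is correct and takes essentially the same route as the paper's proof: the same column-based trichotomy for comparing $\sigma_k$-values of two dots, the same reading of the parity terms $\left(1\pm(-1)^{\sigma_k(\cdot)}\right)/2$ as upper/lower-dot (shared-column tie-break) indicators, and the same use of the standing case-$(b)$ hypothesis that $(2n+2,1)$ is an inversion of $C^k$ to absorb the off-by-one range mismatch in Equalities (\ref{sigma2isuperieursigma1}) and (\ref{sigma2i+1inferieursigma2n+2}). The paper likewise proves (\ref{sigma2isuperieursigma2n+2}) and (\ref{sigma2isuperieursigma1}) explicitly and declares the remaining two analogous, matching your symmetric treatment.
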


\begin{proof}
We only demonstrate Equalities (\ref{sigma2isuperieursigma2n+2}) and (\ref{sigma2isuperieursigma1}), because the proof of (\ref{sigma2i+1inferieursigma1}) is analogous to that of (\ref{sigma2isuperieursigma2n+2}) and the proof of (\ref{sigma2i+1inferieursigma2n+2}) is analogous to that of (\ref{sigma2isuperieursigma1}).
\begin{itemize}
\item
Proof of (\ref{sigma2isuperieursigma2n+2}): if the dot $e_{i_k} = 2n+2$ appears in the $j_k$-th column of $C^k$, and if the dot $e_{i-1} = 2i$ (with $1 \leq i-1 \leq n = i_k$) appears in the $j_{i-1}$-th column of $C^k$, then $\sigma_k(2n+2) \in \{2j_k, 2j_k + 1\}$ and $\sigma_k(2i) \in \{2j_{i-1},2j_{i-1}+1\}$. Consequently, the two following assertions are equivalent:
\begin{itemize}
\item $\sigma_k(2i) > \sigma_k(2n+2)$;
\item either $j_{i-1} > j_k$, or $j_{i-1} = j_k$ and $\sigma_k(2n+2) = 2j_{i-1}$ (which forces $\sigma_k(2i)$ to be $2j_{i-1}+1$).
\end{itemize}
As a result, $$|\{2i < 2n+2 \ | \ \sigma_k(2i) > \sigma_k(2n+2) \}|=r_{C^k}(2n+2) + \delta_{\sigma_k(2n+2)}$$ where $\delta_{\sigma_k(2n+2)} =~1$ if $\sigma_k(2n+2)$ is even, and $\delta_{\sigma_k(2n+2)} =0$ if $\sigma_k(2n+2)$ is odd, \textit{i.e.}, where $\delta_{\sigma_k(2n+2)} = \left(1+(-1)^{\sigma_k(2n+2)}\right)/2.$
\item
Proof of (\ref{sigma2isuperieursigma1}): with the same reasoning as for (\ref{sigma2isuperieursigma2n+2}), we find the equality $$|\{2i < 2n+2 \ | \ \sigma_k(2i) > \sigma_k(1) \}|=r_{C^k}(1) - 1 + \left(1 + (-1)^{\sigma_k(1)}\right)/2$$ (with $r_{C^k}(1) - 1$ instead of $r_{C^k}(1)$ because there is an inversion between $1 = e_{i_{k+1}}$ and $2n+2 = e_{i_k}$, whereas $2n+2$ is not counted in the quantity \\$|\{2i < 2n+2 \ | \ \sigma_k(2i) > \sigma_k(1) \}|$).
Since $- 1 + \left(1 + (-1)^{\sigma_k(1)}\right)/2 = - \left(1 - (-1)^{\sigma_k(1)}\right)/2$, we obtain (\ref{sigma2isuperieursigma1}).
\end{itemize}
\end{proof}

In view of Lemma \ref{explicitationcardinaux}, Equalities (\ref{formulesdenertsigmak+1b}) and (\ref{formulesdenertsigmak+1c}) become

\begin{multline} \label{formulesexpliciteesb}
\text{inv}(\sigma_{k+1}^e) - \text{inv}(\sigma_k^e) =
r_{C^k}(1) - r_{C^k}(2n+2) - 1 + \left((-1)^{\sigma_k(1)}-(-1)^{\sigma_k(2n+2)}\right)/2,
\end{multline}
\begin{multline} \label{formulesexpliciteesc}
\text{inv}(\sigma_{k+1}^o) - \text{inv}(\sigma_k^o) =
l_{C^k}(2n+2) - l_{C^k}(1) - 1 + \left((-1)^{\sigma_k(1)}-(-1)^{\sigma_k(2n+2)}\right)/2.
\end{multline}

Now, from Lemma \ref{lemdirect}, we know that
$$\sigma_k(1) = y_{n+1 + l_{C^k}(1) - r_{C^k}(1)},$$
$$\sigma_k(2n+2) = y_{n+l_{C^k}(2n+2) - r_{C^k}(2n+2)}.$$
>From $y_i = i + 1-(-1)^i$ for all $i$, we deduce the two following formulas.

\begin{equation}
\label{simplificationsigma1} \hspace*{-1.43cm} \sigma_k(1) = n+2 + (-1)^n + l_{C^k}(1) - r_{C^k}(1) + \\ (-1)^{n+1} \left( 1 - (-1)^{l_{C^k}(1) - r_{C^k}(1)} \right),
\end{equation}
\begin{multline}
\label{simplificationsigma2n+2} \sigma_k(2n+2) =n+1 - (-1)^n + l_{C^k}(2n+2) - r_{C^k}(2n+2) \\+ (-1)^n \left( 1 - (-1)^{l_{C^k}(2n+2) - r_{C^k}(2n+2)} \right).
\end{multline}

By injecting Equalities (\ref{simplificationsigma1}) and (\ref{simplificationsigma2n+2}) in Equalities (\ref{formulesdenertsigmak+1a}), (\ref{formulesexpliciteesb}) and (\ref{formulesexpliciteesc}), we obtain the three new equalities

\begin{multline} \label{formulesexplicitees2a}
\sum_{i=1}^{n+1} \sigma_{k+1}(2i) - \sum_{i=1}^{n+1} \sigma_{k}(2i) = 1+l_{C^k}(1) - l_{C^k}(2n+2) + r_{C^k}(2n+2) - r_{C^k}(1)\\ + (-1)^{n+l_{C^k}(1) - r_{C^k(1)}} + (-1)^{n+l_{C^k}(2n+2) - r_{C^k}(2n+2)},
\end{multline}
\begin{multline} \label{formulesexplicitees2b}
\text{inv}(\sigma_{k+1}^e) - \text{inv}(\sigma_k^e) =
r_{C^k}(1) - r_{C^k}(2n+2) - 1 \\- \left( (-1)^{n+l_{C^k}(1) - r_{C^k}(1)}+(-1)^{n + l_{C^k}(2n+2) - r_{C^k}(2n+2} \right) / 2,
\end{multline}
\begin{multline} \label{formulesexplicitees2c}
\text{inv}(\sigma_{k+1}^o) - \text{inv}(\sigma_k^o) =
l_{C^k}(2n+2) - l_{C^k}(1) - 1 \\- \left( (-1)^{n+l_{C^k}(1) - r_{C^k}(1)}+(-1)^{n + l_{C^k}(2n+2) - r_{C^k}(2n+2} \right) / 2.
\end{multline}

Finally, we obtain Equality (\ref{equalitytofinishtheproof}) by summing Equalities (\ref{formulesexplicitees2a}), (\ref{formulesexplicitees2b}) and (\ref{formulesexplicitees2c}). This puts an end to the demonstration of Theorem \ref{bijectionconfigdumont}.

\begin{remark}
In \cite{HZ2}, the authors proved that $\bar{c}_n(q)$ is divisible by $1+q$ if $n$ is odd, but requested a combinatorial proof of this statement. Now, if $n$ is odd, one can prove that every Dellac configuration $C \in DC(n-1)$ is switchable at some even integer, which yields a natural involution $\mathcal{I}$ on $DC(n-1)$ such that inv$(\mathcal{I}(C)) = \text{inv}(C) \pm 1$ for all $C$. This proves combinatorially the divisibility of $\bar{c}_n(q)$ by $1+q$ in view of Theorem \ref{bijectionconfigdumont}.
\end{remark}

\section{Dellac histories}
\label{sec:dellacdyck}

\subsection{Weighted Dyck paths}
\label{sec:flajolet}

Recall (see \cite{Flajolet}) that a \textit{Dyck path} $\gamma$ of length $2n$ is a sequence of points $(p_0,p_1, \hdots, p_{2n})$ in $\N^2$ such that $(p_0,p_{2n})=((0,0),(2n,0))$, and for all $i \in [2n]$, the step $(p_{i-1},p_i)$ is either an \textit{up step} $(1,1)$ or a \textit{down step} $(1,-1)$. We denote by $\Gamma(n)$ the set of Dyck paths of length $2n$.
Furthermore, let $\mu = (\mu_n)_{n \geq 1}$ be a sequence of elements of a ring. A \textit{weighted Dyck path} is a Dyck path $\gamma = (p_i)_{0 \leq i \leq n} \in \Gamma(n)$ whose each up step has been weighted by $1$, and each down step $(p_{i-1},p_i)$ from height $h$ (\textit{i.e.}, such that $p_{i-1} = (i-1,h)$) has been weighted by $\mu_h$.\\
The weight 
\begin{equation} \label{definitionomegamu}
\omega_{\mu}(\gamma)
\end{equation} 
of the weighted Dyck path $\gamma$ is the product of the weights of all steps.

\begin{remark} \label{analysedyckpath}
If $\gamma = (p_i)_{0 \leq i \leq 2n} \in \Gamma(n)$, then $p_i = (i, n_u(i)-n_d(i))$ where $n_u(i)$ and $n_d(i)$ are defined as the numbers of up steps and down steps on the left of $p_i$ respectively (in particular $n_u(i) + n_d(i)= i$). Consequently, since the final point of $\gamma$ is $p_{2n} = (2n,0)$, the path $\gamma$ has exactly $n$ up steps and $n$ down steps, and for all $j \in [n]$, the points $p_{2j-1}$ and $p_{2j}$ are at heights respectively odd and even.
\end{remark}

\begin{definition}[Labelled steps] \label{defithsteps}
Let $\gamma = (p_i)_{0 \leq i \leq 2n} \in \Gamma(n)$. For all $i \in [n]$, we denote by $s^u_i(\gamma)$ (resp. $s^d_i(\gamma)$) the $i$-th up step (resp. down step) of $\gamma$.
When there is no ambiguity, we write $s^u_i$ and $s^d_i$ instead of $s^u_i(\gamma)$ and $s^d_i(\gamma)$.
\end{definition}

\begin{remark} \label{relationijk}
If $s^u_i(\gamma) = (p_{2j-2},p_{2j-1})$ or $(p_{2j-1},p_{2j})$ where $p_{2j-2} = (2j-2,2k)$ for some $k \geq 0$, then, following Remark \ref{analysedyckpath}, we know that $2k = n_u(2j-2) - n_d(2j-2) = 2 n_u(2j-2) - (2j-2)$, and by definition of $s^u_i(\gamma)$ it is necessary that $n_u(2j-2) = i-1$, and we obtain $2k = 2(i-j)$ hence $i = j+k$. In the same context, if $s^d_i(\gamma) = (p_{2j-1},p_{2j})$ or $(p_{2j-2},p_{2j-1})$, then we obtain $i = j-k$ by an analogous reasoning.
\end{remark}

\subsection{Dellac histories}
\label{sec:histoires}

\begin{definition} \label{definitiondellacstory}
A \textit{Dellac history} of length $2n$ is a pair $(\gamma, \xi)$ where $\gamma = (p_i)_{0 \leq i \leq 2n} \in~\Gamma(n)$ and $\xi = (\xi_1,\xi_2, \hdots, \xi_n)$ where  $\xi_i$ is a pair of positive integers $(n_1(i),n_2(i))$ with the following conditions. Let $j \in [n]$ such that the $i$-th down step $s^d_i$ of $\gamma$ is one the two steps $(p_{2j-2},p_{2j-1})$ and $(p_{2j-1},p_{2j})$, and let $2k$ be the height of $p_{2j-2}$. There are three cases.
\begin{enumerate}
\item If $s^d_i = (p_{2j-2},p_{2j-1})$ such that $(p_{2j-1},p_{2j})$ is an up step (see Figure \ref{figures},(1)), then
$$k \geq n_1(i) > n_2(i) \geq 0,$$
and we weight $s^d_i$ as $\omega_i = q^{2k-n_1(i) - n_2(i)}.$
\item If $s^d_i = (p_{2j-1},p_{2j})$ such that $(p_{2j-2},p_{2j-1})$ is an up step (see Figure \ref{figures},(2)),
then $$0 \leq n_1(i) \leq n_2(i) \leq k,$$ 
and we weight $s^d_i$ as $\omega_i = q^{2k-n_1(i) - n_2(i)}.$
\item
If $(p_{2j-2},p_{2j-1})$ and $(p_{2j-1},p_{2j})$ are both down steps (see Figure \ref{figures},(3)), we can suppose that $s^d_i = (p_{2j-2},p_{2j-1})$ and $s^d_{i+1} = (p_{2j-1},p_{2j})$,
then
$$k-1 \geq n_1(i) \geq n_2(i) \geq 0,$$ 
and we weight $s^d_i$ as $\omega_i = q^{2k-1-n_1(i) - n_2(i)}$, also
$$0 \leq n_1(i+1) \leq n_2(i+1) \leq k-1,$$ 
and we weight $s^d_{i+1}$ as $\omega_{i+1} = q^{2k-2-n_1(i+1) - n_2(i+1)}$.
\end{enumerate}
\begin{figure}[!h] \center
\includegraphics[width=8cm]{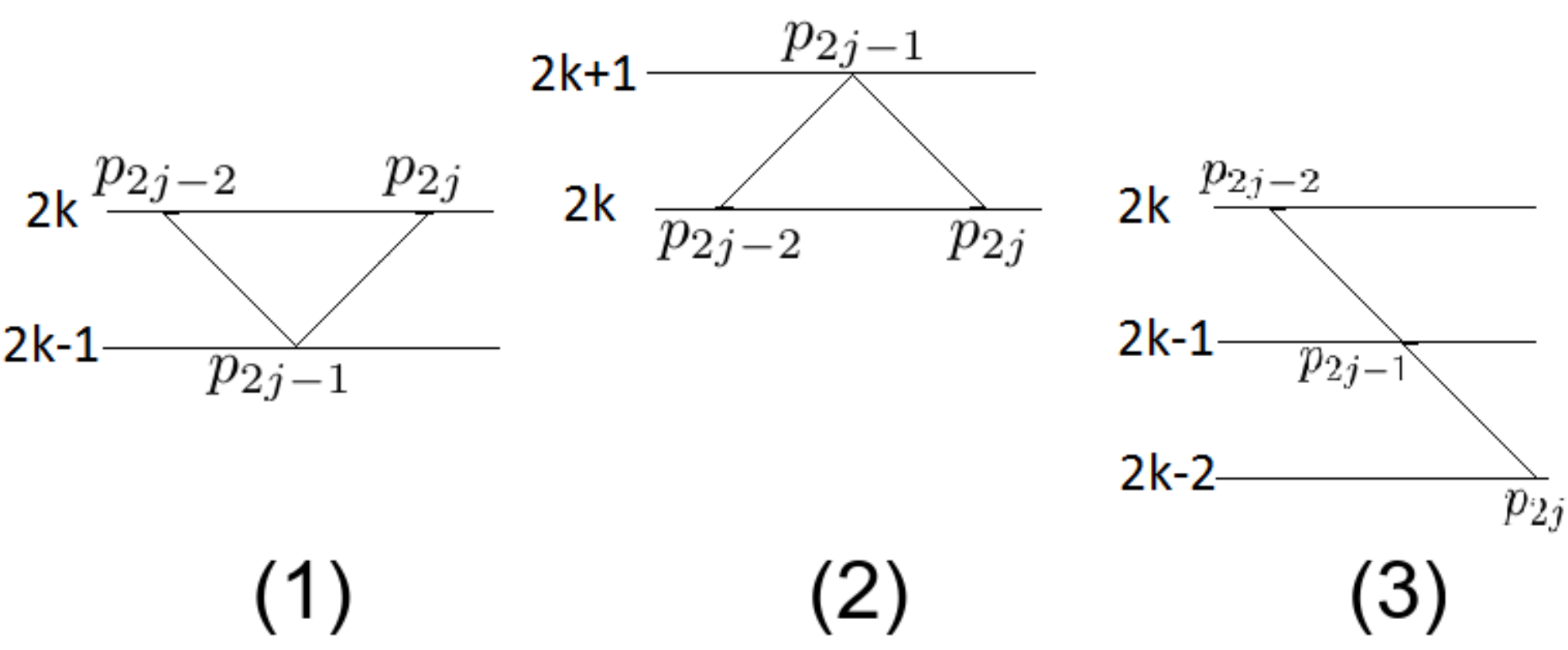}
\caption{}
 \label{figures}
\end{figure}
The \textit{weight} $\omega(\gamma, \xi)$ of the history $(\gamma, \xi)$ is the product of the weights of all down steps.
We denote by $DH(n)$ the set of Dellac histories of length $2n$.
\end{definition}

\hspace*{-5.9mm}
Prior to connecting Dellac histories to weighted Dyck paths, one can easily verify the two following results.

\begin{lemma} \label{egalitesumlambda}
For all $p \geq 1$, we have the equality
$$\sum_{0 \leq n_1 \leq n_2 \leq p-1} q^{2p-2-n_1-n_2} = (1-q^{p+1})(1-q^p)/((1-q^2)(1-q)).$$
\end{lemma}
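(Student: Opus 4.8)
The plan is to prove the identity
$$\sum_{0 \leq n_1 \leq n_2 \leq p-1} q^{2p-2-n_1-n_2} = \frac{(1-q^{p+1})(1-q^p)}{(1-q^2)(1-q)}$$
by a direct evaluation of the double sum on the left-hand side, after which the right-hand side emerges by recognizing a product of two $q$-integers. The cleanest route, I think, is to substitute new indices $a = p-1-n_1$ and $b = p-1-n_2$, so that as $(n_1,n_2)$ ranges over $0 \leq n_1 \leq n_2 \leq p-1$, the pair $(a,b)$ ranges over $0 \leq b \leq a \leq p-1$, and the exponent becomes $2p-2-n_1-n_2 = a+b$. The left-hand side is then simply $\sum_{0 \leq b \leq a \leq p-1} q^{a+b}$, which is more symmetric and easier to handle.

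First I would evaluate this reindexed sum by fixing $a$ and summing over $b$ from $0$ to $a$, giving
$$\sum_{a=0}^{p-1} q^a \sum_{b=0}^{a} q^b = \sum_{a=0}^{p-1} q^a \cdot \frac{1-q^{a+1}}{1-q}.$$
Then I would split the resulting single sum into two geometric-type sums, namely $\frac{1}{1-q}\left(\sum_{a=0}^{p-1} q^a - \sum_{a=0}^{p-1} q^{2a+1}\right)$, and evaluate each as a finite geometric series: the first gives $(1-q^p)/(1-q)$ and the second gives $q(1-q^{2p})/(1-q^2)$. After combining these over the common denominator $(1-q)(1-q^2)$ and simplifying the numerator, I expect the factorization $(1-q^{p+1})(1-q^p)$ to appear.

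The only real obstacle is the algebraic simplification in the final step: after putting everything over the denominator $(1-q^2)(1-q)$, the numerator becomes $(1-q^2)(1-q^p) - q(1-q)(1-q^{2p})$, and one must verify that this equals $(1-q^{p+1})(1-q^p)$. I would carry this out by expanding both sides and matching terms, or more slickly by factoring $1-q^{2p} = (1-q^p)(1+q^p)$ in the second term so that $(1-q^p)$ becomes a common factor: the numerator is then $(1-q^p)\left[(1-q^2) - q(1-q)(1+q^p)\right]$, and the bracketed factor simplifies to $(1-q)(1+q) - q(1-q)(1+q^p) = (1-q)\left[(1+q) - q(1+q^p)\right] = (1-q)(1 - q^{p+1})$. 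Dividing by $(1-q^2) = (1-q)(1+q)$ then leaves exactly $(1-q^{p+1})(1-q^p)/((1-q^2)(1-q))$, as required. This is entirely routine, so no genuine difficulty is anticipated; the proof is a short computation.
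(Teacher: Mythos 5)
Your proof is correct, but there is nothing in the paper to compare it against: the paper states this lemma (together with Proposition \ref{sumstoriesweighteddyckpath}) with only the remark that ``one can easily verify'' it, and gives no argument. Your computation therefore supplies exactly the routine verification the paper omits. The reindexing $a=p-1-n_1$, $b=p-1-n_2$ turning the sum into $\sum_{0\le b\le a\le p-1}q^{a+b}$, the evaluation as $\frac{1}{1-q}\bigl(\sum_{a=0}^{p-1}q^a-\sum_{a=0}^{p-1}q^{2a+1}\bigr)$, and the factorization of the numerator via $1-q^{2p}=(1-q^p)(1+q^p)$ are all sound; I checked the algebra and the identity holds. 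One bookkeeping caution: when you ``put everything over the denominator $(1-q^2)(1-q)$,'' the numerator $(1-q^2)(1-q^p)-q(1-q)(1-q^{2p})$ is that of the bracketed difference alone, and the outer prefactor $\frac{1}{1-q}$ still multiplies this fraction. Since that numerator factors as $(1-q)(1-q^p)(1-q^{p+1})$, the extra $(1-q)$ cancels against the prefactor and yields exactly $(1-q^{p+1})(1-q^p)/((1-q^2)(1-q))$, so your conclusion is right; just make the prefactor explicit in the final step so the argument does not look off by a factor of $(1-q)$.
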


\begin{proposition} \label{sumstoriesweighteddyckpath}
For all $\gamma_0 \in \Gamma(n)$, we have the equality
$$\sum_{(\gamma_0,\xi) \in DH(n)} \omega(\gamma_0, \xi) = \omega_{\lambda}(\gamma_0)$$
where $\omega_{\lambda}$ has been defined in (\ref{definitionomegamu}), and where $\lambda = (\lambda_n)_{n \geq 1}$ is the sequence defined in Theorem \ref{theogeneratingfunctioncn}.
\end{proposition}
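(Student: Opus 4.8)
The plan is to prove Proposition \ref{sumstoriesweighteddyckpath} by establishing a factorization: the left-hand side sums over all valid labelings $\xi$ of a \emph{fixed} underlying Dyck path $\gamma_0$, and since the weight $\omega(\gamma_0,\xi)$ is a product of weights over down steps, I would show that this sum factors as a product over the down steps of $\gamma_0$, with each factor equal to the appropriate $\lambda_h$ (where $h$ is the height governing that step). Concretely, I would fix $\gamma_0 \in \Gamma(n)$ and observe that, by Definition \ref{definitiondellacstory}, each pair $\xi_i = (n_1(i),n_2(i))$ ranges \emph{independently} over a set of admissible values determined only by the local shape of $\gamma_0$ around the $i$-th down step (the height $2k$ of the relevant point and whether the step is of type (1), (2), or (3)). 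Consequently
\begin{equation} \label{eq:factorization}
\sum_{(\gamma_0,\xi) \in DH(n)} \omega(\gamma_0,\xi) = \prod_{i=1}^{n} \left( \sum_{\xi_i} \omega_i \right),
\end{equation}
and it remains to identify each inner sum with the correct $\lambda$-value.

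Next I would treat the three cases of Definition \ref{definitiondellacstory} and match them to the indexing of $\lambda$. Recall $\lambda_{2p-1} = (1-q^{p+1})(1-q^p)/((1-q^2)(1-q))$ and $\lambda_{2p} = q\,\lambda_{2p-1}$. The key is to connect the height parameter $k$ at a down step to the \emph{labelled-step index} of the down step, using Remark \ref{relationijk}, so that the exponent $h$ in $\mu_h = \lambda_h$ in the definition of a weighted Dyck path is read off correctly from the height at which the step departs. For a down step departing from height $h$, the weight in $\omega_\lambda(\gamma_0)$ is $\lambda_h$; I must verify that the inner sum over $\xi_i$ in \eqref{eq:factorization} equals precisely $\lambda_h$ for that same $h$. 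In cases (1) and (2) the relevant point is at even height $2k$, and the down step in question departs from an odd height ($2k-1$ in case (1), since the up step precedes; $2k$ giving an odd departure height after adjusting) — here I would compute the inner sum and compare it to $\lambda$ at the matching odd/even index, invoking Lemma \ref{egalitesumlambda} after a change of variables. In case (3), the two consecutive down steps depart from heights $2k$ and $2k-1$ respectively, so their combined contribution should be $\lambda_{2k}\lambda_{2k-1}$; I would check that the sum $\sum q^{2k-1-n_1-n_2}$ over $k-1 \geq n_1 \geq n_2 \geq 0$ gives $\lambda_{2k-1}$ times the factor $q$ coming from the shift $2k-1$ versus $2k-2$, yielding $\lambda_{2k} = q\lambda_{2k-1}$, while the sum for $s^d_{i+1}$ gives $\lambda_{2k-1}$ directly via Lemma \ref{egalitesumlambda}.

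The main computational engine is Lemma \ref{egalitesumlambda}, which evaluates exactly the constrained double sum $\sum_{0 \leq n_1 \leq n_2 \leq p-1} q^{2p-2-n_1-n_2}$ to $\lambda_{2p-1}$; the symmetric strict-inequality sum in case (1) should reduce to the same closed form after reindexing (replacing $n_1 > n_2$ by $n_1' \geq n_2'$), and the extra power of $q$ distinguishing $\lambda_{2p}$ from $\lambda_{2p-1}$ appears naturally from the difference between the exponent offsets $2k$ and $2k-1$ in the three cases. The hard part will be the bookkeeping in case (3): correctly pairing the two down steps, aligning their height-indexed $\lambda$-weights with the departure heights as prescribed by the weighted-Dyck-path convention, and confirming that the shifted exponents $2k-1$ and $2k-2$ produce exactly $\lambda_{2k}$ and $\lambda_{2k-1}$ so that the product over all down steps reassembles $\omega_\lambda(\gamma_0)$. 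Once each local factor is identified, \eqref{eq:factorization} collapses to $\prod_{\text{down steps } s} \lambda_{h(s)} = \omega_\lambda(\gamma_0)$, completing the proof.
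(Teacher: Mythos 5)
Your strategy---fixing $\gamma_0$, factoring the sum over labelings into independent local sums over the down steps, and evaluating each local sum with Lemma \ref{egalitesumlambda}---is exactly the verification the paper intends: the paper states this proposition without proof (``one can easily verify''), with Lemma \ref{egalitesumlambda} placed immediately before it for precisely this purpose. The factorization step is legitimate, since the admissible range of each $\xi_i$ in Definition \ref{definitiondellacstory} is determined solely by the local shape of $\gamma_0$ around the $i$-th down step, and your treatment of case (3) is correct.

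The one concrete flaw is your height bookkeeping in cases (1) and (2), which as written would make the matching fail. In case (1) the down step $s^d_i=(p_{2j-2},p_{2j-1})$ \emph{precedes} the up step (it does not follow it) and departs from the \emph{even} height $2k$, not from $2k-1$; accordingly its local sum $\sum_{k \geq n_1 > n_2 \geq 0} q^{2k-n_1-n_2}$ must be shown to equal $\lambda_{2k}$, which indeed follows from the reindexing $n_1 \mapsto n_1-1$ that you propose, giving $q\lambda_{2k-1}=\lambda_{2k}$. In case (2) the up step precedes, so the down step departs from the \emph{odd} height $2k+1$, and its local sum $\sum_{0 \leq n_1 \leq n_2 \leq k} q^{2k-n_1-n_2}$ equals $\lambda_{2k+1}$ by Lemma \ref{egalitesumlambda} with $p=k+1$. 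With these identifications corrected (your own computations essentially force them), every down step departing from height $h$ contributes exactly $\lambda_h$, and the product over down steps reassembles $\omega_\lambda(\gamma_0)$, completing the proof.
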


\hspace*{-5.9mm}
Following Proposition \ref{sumstoriesweighteddyckpath}, we have $$\sum_{(\gamma,\xi) \in DH(n)} \omega(\gamma, \xi) = \sum_{\gamma \in \Gamma(n)} \omega_{\lambda}(\gamma)$$ for all $n \geq 0$. Therefore, from a well-known result due to Flajolet \cite{Flajolet}, the generating function $\sum_{n \geq 0} \left( \sum_{(\gamma,\xi) \in DH(n)} \omega(\gamma, \xi) \right) t^n$ is the continued fraction expansion of Formula (\ref{expansioncn+1}).
Consequently, to demonstrate Theorem \ref{theogenerfunctionhn}, it suffices to prove that $\tilde{h}_n(q) = \sum_{(\gamma,\xi) \in DH(n)} \omega(\gamma, \xi)$, which is a straight corollary of the following theorem.

\begin{theorem} \label{bijectiondellacstory}
There exists a bijective map $\Phi: DC(n) \rightarrow DH(n)$ such that
\begin{equation} \label{equationPhidellacstory}
\omega(\Phi(C)) = q^{\binom{n}{2} - \text{inv}(C)}
\end{equation}
for all $C \in DC(n)$.
\end{theorem}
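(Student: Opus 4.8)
The plan is to construct the bijection $\Phi$ by composing the already-established bijection $\phi : DC(n) \rightarrow \D_{n+1}'$ from Theorem \ref{bijectionconfigdumont} with a suitable encoding of normalized Dumont permutations as Dellac histories, but it is cleaner to build $\Phi$ directly and read off the weight contributions from the refined statistics $l_C(e_i)$ and $r_C(e_i)$ introduced in \S\ref{sec:preliminaries}. Concretely, given $C \in DC(n)$, I would first produce the underlying Dyck path $\gamma$ from the sequence of dot-labels $(e_{i_1(1)}, e_{i_2(1)}, \hdots, e_{i_1(n)}, e_{i_2(n)})$: reading the columns left to right, each column contributes a pattern of up/down steps dictated by the parities of its two dots (recall Remark \ref{remarqueinegalitejetons}, where a column has dots of different parity exactly when $e_{i_1(j)} > e_{i_2(j)}$). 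The up steps should correspond to even dots being "opened" and down steps to odd dots "closing" them, so that the height $2k$ before column $j$ records the number of currently-open even dots, matching the structure $p_{2j-2}=(2j-2,2k)$ in Definition \ref{definitiondellacstory}.

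Next I would define the labels $\xi_i = (n_1(i), n_2(i))$ attached to each down step from the inversion data of $C$. The three cases of Definition \ref{definitiondellacstory} correspond exactly to the three local configurations a column can have (one even and one odd dot with the even below, one even and one odd with the odd below, or both dots of the same type), and in each case the bounds on $(n_1(i), n_2(i))$ carve out a box whose size is governed by the height $k$. The natural candidates are $n_1(i) = l_C(e_i)$ and $n_2(i) = r_C(e_i)$ (or the appropriate one of these, depending on whether $s^d_i$ is the first or second step of its column), since these count inversions above and below a given dot and are therefore bounded by the number of available slots, which is precisely $k$ or $k-1$. The verification that these values satisfy the inequalities in each of the three cases — for instance $k \geq n_1(i) > n_2(i) \geq 0$ in case (1) — is where Remark \ref{relationijk} (giving $i = j \pm k$) and Lemma \ref{equivinversionconfigtau} must be combined to translate the geometric bound $j_i \leq i \leq j_i + n$ from Remark \ref{limitationsjetons} into the claimed combinatorial ranges.

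To prove bijectivity I would exhibit the inverse directly: a Dellac history $(\gamma,\xi)$ determines, column by column, which even and odd dots occupy each column (the Dyck path fixes the parity pattern, and the pairs $(n_1(i),n_2(i))$ reconstruct the precise labels via the inversion counts), and one checks this recovers a valid element of $DC(n)$ using the constraints of Definition \ref{definitiondellacconfiguration}. Finally, the weight identity (\ref{equationPhidellacstory}) follows by summing the exponents $2k - n_1(i) - n_2(i)$ (and the $2k-1$ and $2k-2$ variants in case (3)) over all down steps: each term $2k$ or $2k-1$ aggregates, via Remark \ref{relationijk}, into the maximal inversion count $\binom{n}{2}$, while the subtracted quantities $n_1(i)+n_2(i) = l_C(e_i)+r_C(e_i)$ sum to $\text{inv}(C)$ since every inversion of $C$ is counted exactly once as it lies above one of its endpoints and below the other.

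The main obstacle will be case (3), where two consecutive down steps share a column and the height bookkeeping shifts by one (the weights use $2k-1$ and $2k-2$ rather than $2k$): here I must carefully pair up the two odd dots of such a column with the correct subtracted inversion counts and confirm that the telescoping of the $2k$-type terms across the whole path still yields exactly $\binom{n}{2}$, accounting for the off-by-one corrections. I expect the parity corrections $(1 \pm (-1)^{\bullet})/2$ that appeared in Lemma \ref{explicitationcardinaux} to resurface here as the precise mechanism reconciling the two competing height conventions, and getting these signs to cancel globally is the delicate part of the weight computation.
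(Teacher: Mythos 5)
Your overall skeleton does match the paper's: the Dyck path is read off the parity pattern of the columns (up steps for even dots, down steps for odd dots, as in Remark \ref{equivalencepasjetons}), the down steps are labelled by inversion statistics, the weight identity comes from summing exponents, and bijectivity is proved by a column-by-column reconstruction. But your concrete choices fail at the decisive points. The central gap is the choice of labels: the full statistics $l_C(e_i)$, $r_C(e_i)$ do \emph{not} satisfy the bounds of Definition \ref{definitiondellacstory}. Test $C=C_1(n)$: every column holds one even and one odd dot, so every height is $0$, forcing every label pair to be $(0,0)$; yet $r_{C_1(n)}(e_{n+j})=n-j$ and $l_{C_1(n)}(e_j)=j-1$. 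This is exactly why the paper introduces the refined statistics $l_C^e$ (inversions with \emph{even} dots only) and $r_C^o$ (inversions with \emph{odd} dots only) at the start of \S\ref{sec:bijections}: the height $2k$ bounds the number of available dots of one fixed parity, not of all dots. Your inversion count fails for the same reason: summing $l_C(e_i)+r_C(e_i)$ over the odd dots (the down steps) counts each odd--odd inversion twice and each even--even inversion zero times, so it does not equal $\text{inv}(C)$. The paper's correct identity is $\text{inv}(C)=\sum_{i\leq n} l_C^e(e_i)+\sum_{i>n} r_C(e_i)$, and the bridge between the labels (which use $r_C^o$) and this identity is Equation (\ref{bCenfonctiondebCodd}), which absorbs the count of even dots lying in the columns to the right of a given odd dot; it is this conversion that turns each exponent $2k-n_1-n_2$ into $n-i-\bigl(l_C^e(e_{i_1(j)})+r_C(e_{i_2(j)})\bigr)$ and makes the product equal $q^{\binom{n}{2}-\text{inv}(C)}$ via $\sum_{i=1}^n(n-i)=\binom{n}{2}$. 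No parity corrections of the kind in Lemma \ref{explicitationcardinaux} appear anywhere in this proof; that lemma belongs to the proof of Theorem \ref{bijectionconfigdumont}, and expecting it here points at the wrong mechanism.

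Two further gaps. First, your rule for distinguishing cases (1) and (2) of Definition \ref{definitiondellacstory} --- ``even dot below'' versus ``odd dot below'' --- is vacuous: by Remark \ref{remarqueinegalitejetons}, in every mixed column the even dot is the lower one ($i_1(j)\leq n<i_2(j)$). The paper decides between down--up and up--down by comparing the two labels themselves, $l_C^e(e_{i_1(j)})>r_C^o(e_{i_2(j)})$ versus $\leq$, which is precisely what encodes the two disjoint regimes $n_1>n_2$ and $n_1\leq n_2$ and makes the assignment invertible. Second, in case (3) (a column with two odd dots) the device you are missing is Lemma \ref{lem2evendots}: such a column is paired with the most recent column $j_m$ containing two even dots at the same height, and the first down step's label pair $\bigl(l_C^e(e_{i_1(j_m)}),l_C^e(e_{i_2(j_m)})\bigr)$ is taken from that \emph{other} column, while only the second down step's labels $\bigl(r_C^o(e_{i_1(j)}),r_C^o(e_{i_2(j)})\bigr)$ come from the current column. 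Without this cross-column pairing there is no place in the history to record the inversion data of the even dots of the columns of type (two even dots), and neither the inequality bounds nor the weight identity can be made to hold.
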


\subsection{Proof of Theorem \ref{bijectiondellacstory}}
\label{sec:bijections}

In this part, we give preliminaries and connections between Dellac configurations and Dyck paths. Then, we define the algorithm $\Phi:DC(n) \rightarrow DH(n)$ and we demonstrate the statistic preservation formula (\ref{equationPhidellacstory}). Finally, we prove that $\Phi$ is bijective by giving an algorithm $\Psi : DH(n) \rightarrow DC(n)$ which happens to be $\Phi^{-1}$.

\subsubsection{\textbf{Preliminaries on Dellac configurations}}

\begin{definition}
Let $C \in DC(n)$. If $i \leq n$, we denote by $l_C^e(e_i)$ the number of inversions of $C$ between $e_i$ and any even dot $e_{i'\leq n}$ with $i' > i$.
In the same way, if $i > n$, we denote by $r_C^o(e_i)$ the number of inversions of $C$ between $e_i$ and any odd dot $e_{i'>n}$ with $i' <i$.
\end{definition}

\begin{definition}
Let $C \in DC(n)$ and $j \in [n]$. We define the \textit{height} $h(j)$ of the integer $j$ as the number $n_e(j) - n_o(j)$ where $n_e(j)$ (resp. $n_o(j)$) is the number of even dots (resp. odd dots) in the first $j-1$ columns of $C$ (with $n_e(1) = n_o(1) = 0$).
\end{definition}

\begin{remark} \label{analysedellac}
Since the first $j-1$ columns of $C$ contain exactly $2j-2$ dots and, from Remark \ref{limitationsjetons}, always contain the $j-1$ even dots $e_1,e_2, \hdots, e_{j-1}$, there exists $k \in \{0,1,\hdots,j-1\}$ such that $n_e(j) = j-1+k$ and $n_o(j) = j-1-k$. In particular $h(j) = 2k$.
\end{remark}

\begin{lemma} \label{lem2evendots}
Let $C \in DC(n)$, let $j \in [n]$ and $k \geq 0$ such that $h(j) = 2k$. If the $j$-th column of $C$ contains two odd dots, there exists $j' < j$ such $h(j'+1) = 2k$ and such that the $j'$-th column of $C$ contains two even dots.
\end{lemma}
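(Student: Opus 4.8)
The plan is to exploit the fact that the height function $h$ evolves like a lattice path whose steps lie in $\{-2,0,+2\}$. Concretely, for each column the quantity $h(j+1)-h(j)$ equals the number of even dots minus the number of odd dots in the $j$-th column, and since every column carries exactly two dots this difference is $+2$ (two even dots), $0$ (one of each), or $-2$ (two odd dots); this is just a restatement of Remark~\ref{analysedellac}. The lemma then becomes a statement about such a path: if the path sits at height $2k$ at index $j$ and then takes a $-2$ step (two odd dots in column $j$), I must locate an earlier $+2$ step (two even dots) that lands the path exactly at height $2k$.

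First I would settle that $k\geq 1$. If $k=0$, then $h(j)=0$ gives $n_o(j)=j-1$ odd dots among the first $j-1$ columns, and the two odd dots of column $j$ push the count to $j+1$ odd dots among the first $j$ columns; but by Remark~\ref{limitationsjetons} the first $j$ columns can contain only the odd dots $e_{n+1},\ldots,e_{n+j}$, i.e.\ at most $j$ of them, a contradiction. Hence $2k>0$, so the target height is strictly positive and the path, which starts at $h(1)=0$, must genuinely climb up to it.

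The heart of the argument is a first-passage choice. Let $j_0\leq j$ be the smallest index with $h(j_0)=2k$; it exists because $h(j)=2k$, and $j_0\geq 2$ because $h(1)=0<2k$. I would then check that $h(i)<2k$ for every $i<j_0$: indeed, were some earlier height to exceed $2k$, a discrete intermediate-value argument (the steps being $\pm 2$ or $0$, hence all heights even) would force an even earlier index at which $h=2k$, contradicting the minimality of $j_0$. Consequently $h(j_0-1)<2k$, and since $h(j_0-1)\neq 2k=h(j_0)$ with steps of absolute value at most $2$, the step into $j_0$ must be $+2$, i.e.\ $h(j_0-1)=2k-2$ and column $j_0-1$ contains two even dots. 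Taking $j'=j_0-1$ gives $1\leq j'<j$, together with $h(j'+1)=h(j_0)=2k$ and two even dots in column $j'$, which is exactly the claim.

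The only genuinely delicate points are the two auxiliary facts, namely the inequality $k\geq 1$ and the no-overshoot claim $h(i)<2k$ for $i<j_0$; the latter is the main obstacle, in that it requires the discrete intermediate-value reasoning appropriate to steps of size $2$ rather than $1$. Everything else is bookkeeping with the height recursion of Remark~\ref{analysedellac}.
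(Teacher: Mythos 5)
Your proof is correct and follows essentially the same route as the paper: both first derive $k\geq 1$ from the fact that the first $j$ columns can contain at most the $j$ odd dots $e_{n+1},\hdots,e_{n+j}$ (Remark \ref{limitationsjetons}), and then locate $j'$ by a crossing argument on the height path with steps in $\{-2,0,2\}$. The only difference is presentational: the paper simply asserts the existence of $j'$ with $h(j'+1)=2k$ and $h(j')<2k$, whereas you justify it explicitly via a first-passage index and the no-overshoot (discrete intermediate-value) observation, which is exactly the argument the paper leaves implicit.
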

\begin{proof}
>From Remark \ref{analysedellac}, we have $n_e(j) = j-1+k$ and $n_o(j) = j-1-k$.
Since the only $j$ odd dots that the first $j$ columns may contain are $e_{n+1},e_{n+2},\hdots,e_{n+j-1}, e_{n+j}$, and since the $j$-th column already contains two odd dots, the first $j-1$ columns contain at most $j-2$ odd dots. In other words, since they contain $n_o(j) = j - 1 - k$ odd dots, we obtain $k \geq 1$.
Thus $h(j) = 2k > 0$. Since $h(1) = 0$, there exists $j' \in [j-1]$ such that $h(j'+1) = 2k$ and $h(j') < 2k$. Obviously $h(j'+1)-h(j') \in \{-2,0,2\}$, so $h(j') = 2k-2$ and the $j'$-th column of $C$ contains two even dots.
\end{proof}

\subsubsection{\textbf{Algorithm $\Phi:DC(n) \rightarrow DH(n)$}}

\begin{definition}[$\Phi$] \label{definitionphiC}
Let $C \in DC(n)$, we define $\Phi(C)$ as $(\gamma, \xi)$, where $\gamma = (p_i)_{0 \leq i \leq 2n}$ (which is a path in $\Z^2$ whose initial point $p_0$ is defined as $(0,0)$) and $\xi = (\xi_1, \hdots, \xi_n)$ (which is a sequence of pairs of positive integers) are provided by the following algorithm. For $j=1$ to $n$, let $e_{i_1(j)}$ and $e_{i_2(j)}$ (with $i_1(j) < i_2(j)$) be the two dots of the $j$-th column of $C$.
\begin{enumerate}
\item If $i_2(j) \leq n$, then $(p_{2j-2}, p_{2j-1})$ and $(p_{2j-1},p_{2j})$ are defined as up steps.
\item
If $i_1(j) \leq n < i_2(j)$, let $i \in [n]$ such that $i-1$ down steps have already been defined. We define $\xi_{i}$ as $(l_C^e(e_{i_1(j)}), r_C^o(e_{i_2(j)})$. Afterwards,
\begin{enumerate}
\item if $l_C^e(e_{i_1(j)}) > r_C^o(e_{i_2(j)})$, we define $(p_{2j-2},p_{2j-1})$ as a down step and $(p_{2j-1}, p_{2j})$ as an up step (see Figure \ref{figures},(1));
\item if $l_C^e(e_{i_1(j)}) \leq r_C^o(e_{i_2(j)})$, we define $(p_{2j-2},p_{2j-1})$ as an up step and $(p_{2j-1}, p_{2j})$ as a down step (see Figure \ref{figures},(2)).
\end{enumerate}
\item If $n < i_1(j)$, let $i \in [n]$ such that $i-1$ down steps have already been defined. We define
$(p_{2j-2}, p_{2j-1})$ and $(p_{2j-1},p_{2j})$ as down steps (see Figure \ref{figures},(3)).
Afterwards, let $k \geq 0$ such that $p_{2j-2} = (2j-2,2k)$. Obviously, the number $n_u(2j-2) = j-1+k$ of up steps (resp. the number $n_d(2j-2) = j-1-k$ of down steps) that have already been defined is the number $n_e(j)$ of even dots (resp. the number $n_o(j)$ of odd dots) in the first $j-1$ columns of $C$, thence $h(j) = 2k$. From Lemma \ref{lem2evendots}, there exists $j' < j$ such that $h(j'+1) = 2k$ (which means $p_{2j'} = (2j',2k)$) and such that the $j'$-th column of $C$ contains two even dots, which means $(p_{2j'-2},p_{2j'-1})$ and $(p_{2j'-1},p_{2j'})$ are two consecutive up steps (see Figure \ref{samelevel}).
\begin{figure}[!h] \center
\includegraphics[width=6cm]{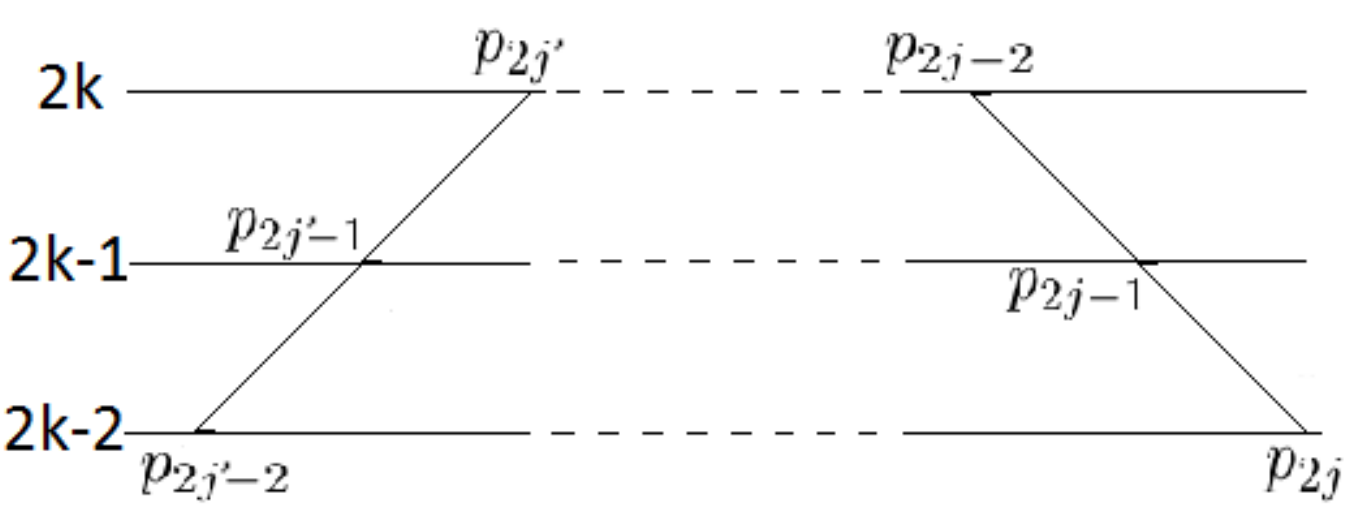}
\caption{Two consecutive up steps and down steps at the same level.}
\label{samelevel}
\end{figure}
Now, we consider the maximum $j_m < j$ of the integers $j'$ that verify this property, and we consider the two dots $e_{i_1(j_m)}$ and $e_{i_2(j_m)}$ (with $i_1(j_m) < i_2(j_m)$) of the $j_m$-th column of $C$. Finally, we define $\xi_{i}$ and $\xi_{i+1}$ as
\begin{align*}
\xi_{i}  & =  (l_C^e(e_{i_1(j_m)}),l_C^e(e_{i_2(j_m)})),\\
\xi_{i+1}  & =  (r_C^o(e_{i_1(j)}),r_C^o(e_{i_2(j)})).
\end{align*}
\end{enumerate}
\end{definition}

\begin{example} \label{exemplecalculPhiC}
The Dellac configuration $C \in DC(6)$ of Figure \ref{exempleconfigstory} yields the data $\Phi(C) = (\gamma, \xi)$, which is in fact a Dellac history, depicted in Figure \ref{imagePhiC} (since $\Phi(C)$ is a Dellac history, we have indicated the weight $\omega_i$ of the $i$-th down step $s^d_i$ of $\gamma$ for all $i \in [6]$, see Definition \ref{definitiondellacstory}).
\begin{figure}[!h]
\centering
\begin{minipage}{.4\textwidth}
  \centering
\includegraphics[width=2.5cm]{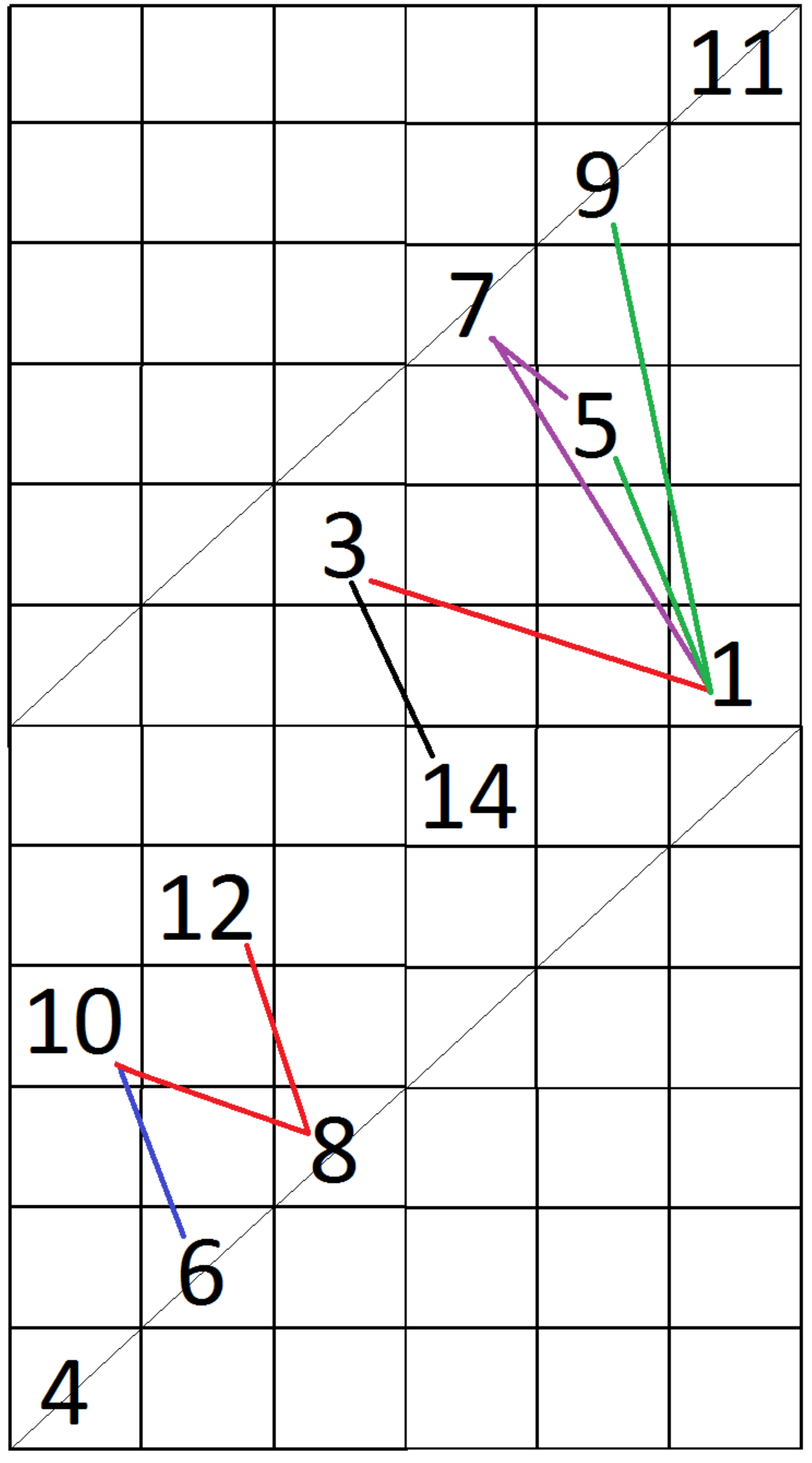}
\label{exempleconfigstory}
\caption{$C \in DC(6)$.}
\end{minipage}%
\begin{minipage}{.5\textwidth}
  \centering
\includegraphics[width=8.5cm]{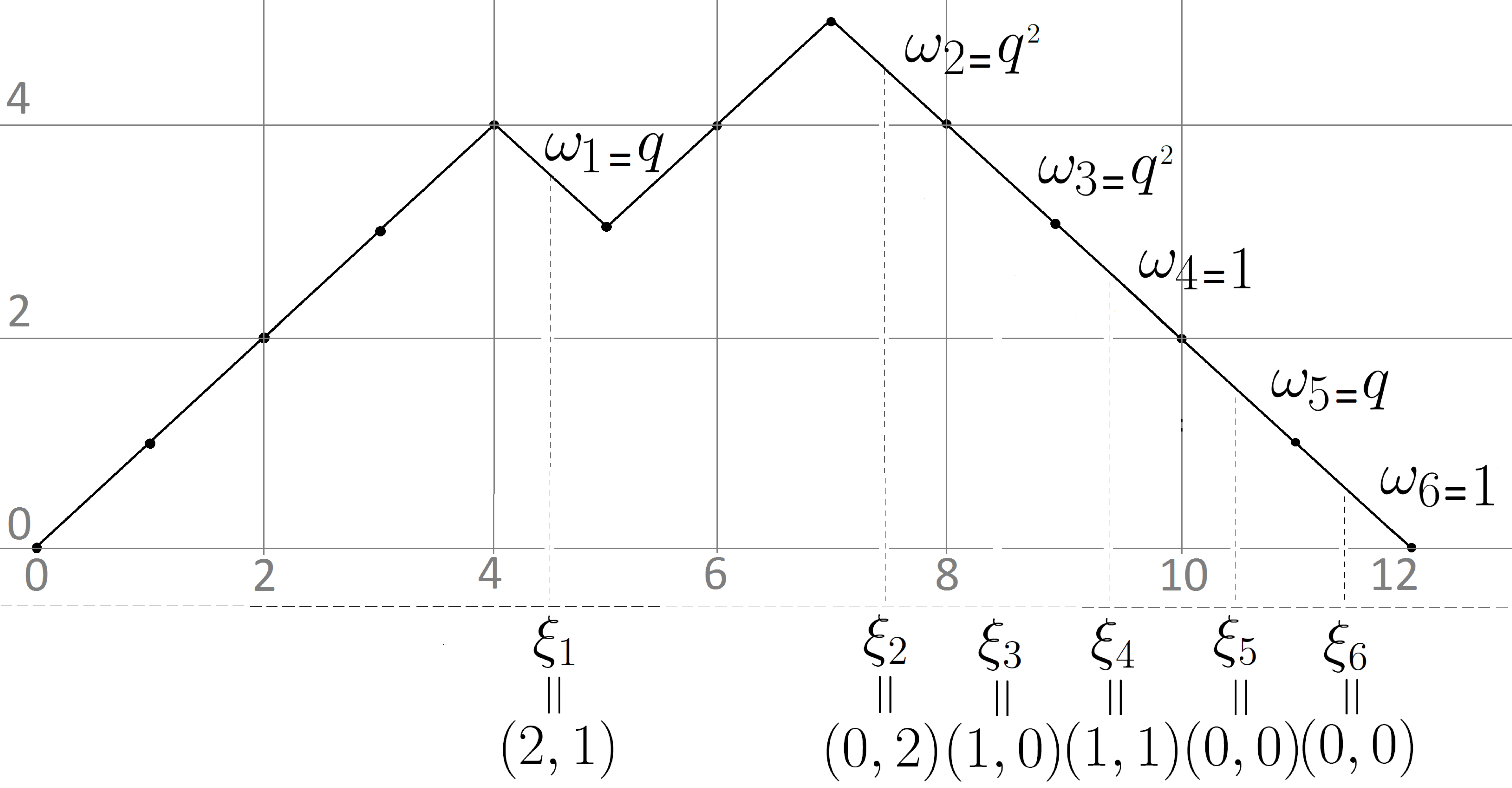}
\label{imagePhiC}
\caption{$\Psi(C) \in DH(6)$.}
\end{minipage}
\end{figure}
\end{example}

\begin{remark} \label{equivalencepasjetons}
If $\Phi(C) = (\gamma,\xi)$, there are as many up steps (resp. down steps) as even dots (resp. odd dots) in the first $j$ columns of $C$. With precision, for all $i \in [n]$, the even dot $e_{p_C(i)}$ and the odd dot $e_{n+q_C(i)}$ (see Definition \ref{defparticulardots}) give birth to the $i$-th up step and the $i$-th down step of $\gamma$ respectively.
In particular, the path $\gamma$ has $n$ up steps and $n$ down steps, so $p_{2n} = (2n,0)$. To prove that $\gamma$ is a Dyck path, we still have to check that it never goes below the line $y=0$.
\end{remark}

\begin{remark} \label{memeniveaudyckpath}
In the context $(3)$ of Definition \ref{definitionphiC}, if $h(j) = 2k$ (\textit{i.e.}, if $p_{2j-2} = (2j-2,2k)$), then the maximum $j_m$ of the integers $j' < j$ such that $h(j'+1) = 2k$ and such that the $j'$-th column contains two even dots, is such that $(p_{2j_m-2},p_{2j_m-1})$ and $(p_{2j_m-1,2j_m})$ are the last two consecutive up steps from level $2k-2$ towards level $2k$ in $\gamma$.
\end{remark}

\begin{proposition}
Let $C \in DC(n)$ and $(\gamma,\xi) = \Phi(C)$. The path $\gamma$ is a Dyck path.
\end{proposition}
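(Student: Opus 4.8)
The plan is to show that the path $\gamma$ never dips below the line $y=0$, since Remark \ref{equivalencepasjetons} already guarantees that $\gamma$ starts at $(0,0)$, ends at $(2n,0)$, and consists of unit up and down steps. Equivalently, I would argue that at every intermediate point $p_i$ the height $n_u(i) - n_d(i)$ is nonnegative. The natural quantity to track is $h(j) = n_e(j) - n_o(j)$, the height at the left edge of the $j$-th column, because Remark \ref{equivalencepasjetons} identifies up steps with even dots and down steps with odd dots, so $n_u(2j-2) = n_e(j)$ and $n_d(2j-2) = n_o(j)$; thus the height of $p_{2j-2}$ is exactly $h(j)$. By Remark \ref{analysedellac}, we already know $h(j) = 2k$ with $0 \le k \le j-1$, so in particular $h(j) \ge 0$ for all $j \in [n]$.

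First I would observe that it suffices to control the heights at the \emph{even-indexed} points $p_{2j}$, because within each column the path moves by at most one step down before the column boundary, and the potential danger of going below $0$ is a local phenomenon. More carefully, the only way $\gamma$ could go below $0$ is through a down step leaving a point at height $0$. So the crux is to rule out a down step $(p_{2j-2}, p_{2j-1})$ or $(p_{2j-1}, p_{2j})$ originating from height $0$ that would send the path to height $-1$. I would split into the three cases of Definition \ref{definitionphiC} according to the parities of the two dots in the $j$-th column. In case (1) both dots are even, giving two up steps, which can only raise the path. In case (3) both dots are odd, giving two consecutive down steps from $p_{2j-2} = (2j-2, 2k)$; here Remark \ref{analysedellac} forces $h(j) = 2k$, and the proof of case (3) in the algorithm, via Lemma \ref{lem2evendots}, shows $k \ge 1$, so the path descends from height $2k \ge 2$ to height $2k-2 \ge 0$, never reaching $-1$.

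The case requiring genuine care is case (2), where the column contains one even and one odd dot, producing exactly one down step paired with one up step in some order. Here I would note that the pair of steps $(p_{2j-2}, p_{2j-1})$ and $(p_{2j-1}, p_{2j})$ consists of one up and one down, so by Remark \ref{analysedyckpath} (or directly) the net height change across the column is $0$: we have $p_{2j} = p_{2j-2} + (2,0)$, so the height at $p_{2j}$ equals the height $h(j) = 2k \ge 0$ at $p_{2j-2}$. The only intermediate point is $p_{2j-1}$, at odd index. If the down step comes first (subcase (2)(a)), the path dips to height $2k-1$, and since $k \ge 0$ I must confirm $2k \ge 2$, \textit{i.e.}, $k \ge 1$, whenever this subcase occurs at height-$0$ columns; but in fact a down-step-first configuration at $k=0$ would require an odd dot among the first $j-1$ columns beyond those forced by Remark \ref{limitationsjetons}, which is impossible, so $k \ge 1$ and the intermediate height $2k-1 \ge 1$ stays nonnegative. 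If the up step comes first (subcase (2)(b)), the intermediate height is $2k+1 > 0$ and there is nothing to check.

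The main obstacle, then, is the bookkeeping in case (2)(a): one must verify that whenever the down step precedes the up step within a column, the incoming height $2k$ is strictly positive. I expect the cleanest route is to combine Remark \ref{analysedellac} (which pins $h(j) = 2k$ and $0 \le k \le j-1$) with the parity constraints of Remark \ref{limitationsjetons} on which dots can appear in the first $j-1$ columns, arguing that a down step at height $0$ would demand an odd dot in a column too early to contain one. Once all three cases confirm that no step ever crosses below $y=0$, together with the endpoint and step-type data from Remark \ref{equivalencepasjetons}, we conclude that $\gamma \in \Gamma(n)$ is a genuine Dyck path.
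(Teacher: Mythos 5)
Your overall architecture is sound, and it even streamlines the paper's argument at one point: combining Remark \ref{equivalencepasjetons} with Remark \ref{analysedellac} to get, for free, that every even-indexed point $p_{2j-2}$ sits at height $h(j)=2k\geq 0$, so that the only possible failure is an odd-indexed point at height $-1$, i.e., a column whose first step is a down step while $h(j)=0$. Your treatment of cases $(1)$, $(2)(b)$ and $(3)$ of Definition \ref{definitionphiC} is correct (for $(3)$, the proof of Lemma \ref{lem2evendots} does give $k\geq 1$).

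However, your justification of the decisive subcase --- ruling out rule $(2)(a)$ at a height-$0$ column --- fails as stated, and this subcase is the crux of the whole proposition. You claim that a down-step-first configuration at $k=0$ ``would require an odd dot among the first $j-1$ columns beyond those forced by Remark \ref{limitationsjetons}.'' But rule $(2)(a)$ is the inequality $l_C^e(e_{i_1(j)}) > r_C^o(e_{i_2(j)})$, and since $r_C^o(e_{i_2(j)})\geq 0$ always, what it actually forces is $l_C^e(e_{i_1(j)})\geq 1$, i.e., an \emph{even} dot $e_{i'}$ with $n\geq i' > i_1(j)$ lying in one of the first $j-1$ columns. The presence or absence of extra odd dots in the early columns has no bearing on which branch of rule $(2)$ fires; note also that dot-placement constraints alone can never rule out a dip below $0$ (a column such as $\{e_1,e_{n+1}\}$ at height $0$ is perfectly legal) --- it is exactly the comparison $l_C^e$ versus $r_C^o$ built into rule $(2)$ that prevents it, so any contradiction must pass through that comparison. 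The correct argument, which is the paper's, is: at $k=0$ the first $j-1$ columns contain exactly $j-1$ even dots, which by Remark \ref{limitationsjetons} must be precisely $e_1,\ldots,e_{j-1}$; since that remark also gives $i_1(j)\geq j$, none of these dots has index exceeding $i_1(j)$, whence $l_C^e(e_{i_1(j)})=0\leq r_C^o(e_{i_2(j)})$ and rule $(2)(b)$ fires, making the first step of the column an up step. With this replacement your proof closes; it then coincides in substance with the paper's, which runs the same computation by contradiction at an arbitrary down step leaving height $0$.
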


\begin{proof}
From Remark \ref{equivalencepasjetons}, it suffices to prove that $\gamma = (p_0, p_1, \hdots, p_{2n})$ never goes below the line $y=0$.
If we suppose the contrary, there exists $i_0 \in \{0,1,\hdots,2n-1\}$ such that $p_{i_0} = (i_0,0)$ and $(p_{i_0},p_{i_0+1})$ is a down step. From Remark \ref{analysedyckpath}, we know that $p_{i_0} = (i_0,0) = (i_0,2n_u(i_0) - i_0)$, so $i_0 = 2 n_u(i_0)$. Let $j_0 = n_u(i_0)+1 \in [n]$.
In the first $j_0-1$ columns of $C$, from Remark \ref{equivalencepasjetons}, there are $n_u(i_0) = j_0-1$ even dots and $n_d(i_0) = j_0-1$ odd dots. Consequently, since those first $j_0-1$ columns always contain the $j_0-1$ even dots $e_1,e_2, \hdots, e_{j_0-1}$ and cannot contain any other odd dot than $e_{n+1},e_{n+2}, \hdots, e_{n+j_0-1}$ (see Remark \ref{limitationsjetons}), the $2j_0-2$ dots they contain are precisely $e_1,e_2,\hdots,e_{j_0-1}$ and $e_{n+1},e_{n+2}, \hdots, e_{n+j_0-1}$. Therefore, the only two dots that the $j_0$-th column may contain are $e_{j_0}$ and $e_{n+j_0}$. But then, it forces $l_C^e(e_{j_0})$ and $r_C^o(e_{n+j_0})$ to equal $0$. In particular $l_C^e(e_{j_0}) \leq r_C^o(e_{n+j_0})$. Following the rule $(2)(b)$ of Definition \ref{definitionphiC}, it means $(p_{i_0},p_{i_0+1})$ is defined as an up step, which is absurd by hypothesis.
\end{proof}

\begin{proposition} \label{configstory}
For all $C \in DC(n)$, the data $\Phi(C)$ is a Dellac history of length $2n$.
\end{proposition}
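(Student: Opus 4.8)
The plan is to verify that the data $\Phi(C)=(\gamma,\xi)$ satisfies all the defining conditions of a Dellac history from Definition \ref{definitiondellacstory}. Since the previous proposition already establishes that $\gamma$ is a Dyck path, what remains is to check that each pair $\xi_i=(n_1(i),n_2(i))$ consists of nonnegative integers obeying the correct inequalities relative to the height $2k$ of the preceding even-index point, according to which of the three cases of Definition \ref{definitiondellacstory} applies. The key observation connecting the two definitions is that case (1) of Definition \ref{definitiondellacstory} corresponds to rule $(2)(a)$ of Definition \ref{definitionphiC}, case (2) corresponds to rule $(2)(b)$, and case (3) corresponds to rule $(3)$; so I would treat each case in turn and confirm that the entries assigned by $\Phi$ lie in the prescribed ranges.

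First I would handle the mixed-column cases coming from rule $(2)$, where the $j$-th column contains one even dot $e_{i_1(j)}$ and one odd dot $e_{i_2(j)}$ and we set $\xi_i=(l_C^e(e_{i_1(j)}),r_C^o(e_{i_2(j)}))$. The essential point is to bound each of these inversion counts by $k$, where $h(j)=2k$ is the height of $p_{2j-2}$. By Remark \ref{analysedellac}, the first $j-1$ columns contain exactly $j-1+k$ even dots and $j-1-k$ odd dots. The quantity $l_C^e(e_{i_1(j)})$ counts inversions of $e_{i_1(j)}$ with even dots appearing later but in earlier columns; I would argue that the relevant even dots are among those already placed, so $l_C^e(e_{i_1(j)})\le k$, and symmetrically $r_C^o(e_{i_2(j)})\le k$ using that $j-1-k$ odd dots precede. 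In subcase $(2)(a)$ we are in the configuration where $l_C^e(e_{i_1(j)}) > r_C^o(e_{i_2(j)})$, which is exactly the strict inequality $k\ge n_1(i)>n_2(i)\ge 0$ required by case (1); subcase $(2)(b)$ gives $0\le n_1(i)\le n_2(i)\le k$ as in case (2).

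Next I would handle rule $(3)$, the double-down-step case, which produces two pairs $\xi_i$ and $\xi_{i+1}$. Here $p_{2j-2}=(2j-2,2k)$ and the $j$-th column has two odd dots; by Lemma \ref{lem2evendots} there is a column $j_m<j$ with two even dots at the same height, and $\Phi$ sets $\xi_i=(l_C^e(e_{i_1(j_m)}),l_C^e(e_{i_2(j_m)}))$ and $\xi_{i+1}=(r_C^o(e_{i_1(j)}),r_C^o(e_{i_2(j)}))$. I would check the two chains of inequalities $k-1\ge n_1(i)\ge n_2(i)\ge 0$ and $0\le n_1(i+1)\le n_2(i+1)\le k-1$ separately. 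The monotonicity within each pair follows because $i_1(j_m)<i_2(j_m)$ are in the same column (so any later even dot inverted with the upper one is also inverted with the lower one, giving $l_C^e(e_{i_1(j_m)})\ge l_C^e(e_{i_2(j_m)})$), and similarly for the odd dots in column $j$; the bound by $k-1$ comes from counting the available even (resp. odd) dots beyond those forced into the first $j_m-1$ (resp. $j-1$) columns, using Remark \ref{analysedellac} together with Remark \ref{memeniveaudyckpath}.

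The main obstacle I anticipate is the bookkeeping in case (3): establishing the sharp bound $k-1$ (rather than $k$) on these inversion counts requires carefully tracking, via Remark \ref{memeniveaudyckpath}, that $j_m$ is the last pair of up steps from level $2k-2$ to level $2k$, so that the even dots of columns strictly between $j_m$ and $j$ do not contribute and the height genuinely drops the count by one. Pinning down exactly which even and odd dots can be inverted with the relevant column dots, and confirming the strict versus non-strict inequalities match the case split in Definition \ref{definitiondellacstory}, is the delicate part; the rest is a direct translation between the inversion statistics $l_C^e,\ r_C^o$ and the height function $h$.
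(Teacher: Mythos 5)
Your proposal is correct and takes essentially the same route as the paper's proof: the same correspondence between the cases of Definition \ref{definitiondellacstory} and the rules of Definition \ref{definitionphiC}, the same monotonicity observations within the pairs of case (3), and the same counting argument (dots present in the relevant initial columns via Remark \ref{analysedellac}, minus the forced low-index dots of Remark \ref{limitationsjetons}). One clarification: the bound $k-1$ in case (3) does not actually require the maximality of $j_m$ from Remark \ref{memeniveaudyckpath} --- it falls out of applying Remark \ref{analysedellac} at the feet of the double steps, i.e., at heights $h(j_m)=2k-2$ and $h(j+1)=2k-2$, and the even dots in columns strictly between $j_m$ and $j$ can never be counted by $l_C^e(e_{i_1(j_m)})$ anyway, since that statistic only sees columns strictly to the left of $j_m$.
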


\begin{proof} Let $\Phi(C) = (\gamma,\xi) = ((p_0,p_1,\hdots,p_{2n}),(\xi_1,\xi_2,\hdots,\xi_n))$. We know that $\gamma \in Dyck(n)$. It remains to prove that $\xi$ fits the appropriate inequalities described in Definition \ref{definitiondellacstory}.
Let $j \in [n]$ and let $(e_{i_1(j)}$ and $e_{i_2(j)}$ (with $j \leq i_1(j) < i_2(j) \leq j+n$) be the two dots of the $j$-th column of $C$.
\begin{itemize}
\item If $(p_{2j-1},p_{2j})$ is the down step $s^d_i$ in the context $(2)(a)$ of Definition \ref{definitionphiC}, then
$\xi_{i}  = (n_1,n_2)= (l_C^e(e_{i_1(j)})), r_C^o(e_{i_2(j)}))$ with $l_C^e(e_{i_1(j)}) > r_C^o(e_{i_2(j)})$. Here, the appropriate inequality to check is $k \geq n_1 > n_2$ (this is the context $(1)$ of Definition \ref{definitiondellacstory}).
Since the first $j-1$ columns of $C$ contain $j-1+k$ even dots, including the $j-1$ dots $e_1,e_2, \hdots,e_{j-1}$ (with $j-1 < i_1(j)$), there is no inversion between any of these dots and $e_{i_1(j)}$. Consequently, in the first $j-1$ columns of $C$, there are at most $(j-1+k)-(j-1) = k$ even dots $e_i$ with $n \geq i > i_1(j)$, thence $n_1 = l_C^e(e_{i_1(j)}) \leq k$.
\item Similarly, if $(p_{2j-2},p_{2j-1})$ is the down step $s^d_i$ set in the context $(2)(b)$ of Definition \ref{definitionphiC}, then we have $\xi_{i} = (n_1,n_2) = (l_C^e(e_{i_1(j)})), r_C^o(e_{i_2(j)}))$, with $l_C^e(e_{i_1(j)}) \leq r_C^o(e_{i_2(j)})$. Now, the appropriate equality to check is $n_1 \leq n_2 \leq k$ (this is the context $(2)$ of Definition \ref{definitiondellacstory}).
The first $j$ columns of $C$ contain $j-k$ odd dots and the $i_2(j)-n$ lines from the $(n+1)$-th line to the $i_2(j)$-th line contain $i_2(j)-n$ odd dots, so, in the $n-j$ last columns, the number of odd dots $e_{i}$ with $n < i < i_2(j)$ is at most $(i_2(j) - n) - (j-k)  = k + (i_2(j) - j - n) \leq k$, thence $n_2 = r_C^o(e_{i_2(j)}) \leq k$.
\item Finally, if $(p_{2j-2},p_{2j-1})$ and $(p_{2j-1},p_{2j})$ are two consecutive down steps $s^d_i$ and $s^d_{i+1}$ in the context $(3)$ of Definition \ref{definitionphiC}, then 
\begin{align*}
\xi_{i} & = (l_C^e(e_{i_1(j_m)}),l_C^e(e_{i_2(j_m)})),\\ \xi_{i+1} & = (r_C^o(e_{i_1(j)}),r_C^o(e_{i_2(j)}))
\end{align*}
and the two inequalities to check (this is the context $(3)$ of Definition \ref{definitionphiC}) are:
\begin{align}
\label{etoile} k-1 &\geq l_C^e(e_{i_1(j_m)}) \geq l_C^e(e_{i_2(j_m)}),\\
\label{etoile2} r_C^o(e_{i_1(j)}) &\leq r_C^o(e_{i_2(j)}) \hspace*{2.3mm} \leq k-1.
\end{align} 
\begin{itemize}
\item Proof of (\ref{etoile}): since $i_1(j_m) < i_2(j_m)$, obviously $l_C^e(e_{i_1(j_m)}) \geq l_C^e(e_{i_2(j_m)})$.
Afterwards, since $p_{2j_m - 2}$ is at the level $h(j_m) = 2k-2$, there are $j_m - 1  + (k-1) = j_m+k-2$ even dots in the first $j_m-1$ columns of $C$. Since the first $j_m-1$ rows of $C$ contain the $j_m-1$ even dots $e_1,e_2,\hdots, e_{j_m-1}$, the first $j_m-1$ columns of $C$ contain at most $(j_m+k-2) - (j_m-1) = k-1$ even dots $e_i$ with $n \geq i > i_1(j_m)$, thence $l_C^e(e_{i_1(j_m)}) \leq k-1$.
\item Proof of (\ref{etoile2}): since $i_1(j) < i_2(j)$, obviously $r_C^o(e_{i_1(j)}) \leq r_C^o(e_{i_2(j)})$.
Afterwards, since $p_{2j}$ is at the level $h(j+1) = 2k-2$, there are $j  - (k-1) = j-k+1$ odd dots in the first $j$ columns of $C$. Since the $j$ rows, from the $(n+1)$-th row to the $(n+j)$-th row of $C$, contain $j$ odd dots, the $n-j$ last columns of $C$ contain at most $j - (j-k+1) = k-1$ odd dots $e_i$ with $n < i < i_2(j_m)$, thence $r_C^o(e_{i_2(j)}) \leq k-1$.
\end{itemize}
\end{itemize}
So $\Phi(C)$ is a Dellac history of length $n$.
\end{proof}

\subsubsection{\textbf{Proof of the statistic preservation formula (\ref{equationPhidellacstory})}}

Let $C \in DC(n)$ and $\Phi(C) = (\gamma,\xi)$ with $\gamma = (p_0,p_1,\hdots,p_{2n})$ and $\xi = (\xi_1,\xi_2, \hdots, \xi_{2n})$.
By definition, we have $\omega(\Phi(C)) = \Pi_{i=1}^{n} \omega_i$ where $\omega_i$ is the weight of the $i$-th down step $s^d_i$ of $\gamma$.
In the contexts $(1)$ or $(2)$ of Definition \ref{definitiondellacstory}, we have 
\begin{equation} \label{poidsdedi}
\omega_i = q^{2k-l_C^e\left(e_{i_1(j)}\right) - r_C^o\left(e_{i_2(j)}\right)}.
\end{equation}
Since $p_{2j-2}$ is at the level $h(j) = 2k$, the first $j-1$ columns of $C$ contain $j-1-k$ odd dots.
Consequently, following Definition \ref{definitionphiC}, the step $s^d_i$ is the $(j-k)$-th down step of $\gamma$, \textit{i.e.}, the integer $i$ equals $j-k$.
Also, since the first $j$ columns of $C$ contain $j+k$ even dots, the last $n-j$ columns of $C$ (from the $(j+1)$-th column to the $n$-th column) contain $n - (j+k) = n-j-k = i -k$ even dots.
As a result, we obtain the equality
\begin{equation} \label{bCenfonctiondebCodd}
r_C(e_{i_2(j)}) = r_C^o(e_{i_2(j)}) + i-k.
\end{equation}
In view of (\ref{bCenfonctiondebCodd}), Equality (\ref{poidsdedi}) becomes $\omega_i = q^{n-i - \left(l_C^e \left( e_{i_1(j)} \right) + r_C \left( e_{i_2(j)} \right) \right)}$. 
With the same reasoning, if $s^d_i$ and $s^d_{i+1}$ are two consecutive down steps in the context $(3)$ of Definition \ref{definitiondellacstory}, then by commuting factors of $\omega_i$ and $\omega_{i+1}$, we obtain the equality
$$\omega_i \omega_{i+1} = \left( q^{n-i - \left(l_C^e \left( e_{i_1(j_m)} \right) + r_C \left( e_{i_2(j_m)} \right) \right)} \right)  
\left(q^{n-(i+1) - \left(l_C^e \left( e_{i_1(j)} \right) + r_C \left( e_{i_2(j)} \right) \right)} \right).$$
>From $\omega(\Phi(C) = \Pi_{i = 1}^n \omega_i$, it follows that
\begin{equation} \label{poidsphiCpresque}
\omega(\Phi(C)) = q^{ \left( \sum_{i=1}^n n-i \right)  - \left( \sum_{i \leq n} l_C^e(e_i) + \sum_{i > n} r_C(e_i) \right)}.
\end{equation}
Now, it is easy to see that
$\text{inv}(C) = \sum_{i \leq n} l_C^e(e_i) + \sum_{i > n} r_C(e_i)$. In view of the latter remark, Formula (\ref{poidsphiCpresque}) becomes Formula (\ref{equationPhidellacstory}). \hfill $\qed$

\subsubsection{\textbf{Proof of the bijectivity of $\Phi:DC(n) \rightarrow DH(n)$}}

To end the proof of Theorem \ref{bijectiondellacstory}, it remains to show that $\Phi$ is bijective. To this end, we construct (in Definition \ref{defPsiS}) a map $\Psi : DH(n) \rightarrow DC(n)$ and we prove in Lemma \ref{PhiandPsiinversemaps} that $\Phi$ and $\Psi$ are inverse maps.

\begin{definition} \label{defPsiS}
Let $S = (\gamma,\xi) \in DH(n)$ with $\gamma = (p_0,p_1,\hdots, p_{2n})$ and $\xi = (\xi_1, \hdots, \xi_{n})$.
We define $\Psi(S)$ as a tableau $T$ of width $n$ and height $2n$, in which we insert the $2n$ dots $e_1,e_2, \hdots, e_{2n}$ according to the two following (analogous and independant) algorithms.
\begin{enumerate}
\item \textbf{Insertion of the $n$ odd dots $e_{n+1},e_{n+2}, \hdots, e_{2n}$.} Let $\I_0^o = (1, 2, \hdots, n)$.
For $i = 1$ to $n$, consider $j_i \in [n]$ such that the $i$-th down step $s^d_i$ of $\gamma$ is one of the two steps $(p_{2j_i-2},p_{2j_i-1})$ or $(p_{2j_i-1},p_{2j_i})$. If the set $\I_{i-1}^o \subset \I_0^o$ is defined, we denote by $H(i)$ the hypothesis "$\I_{i-1}^o$ has size $n+1-i$ such that for all $j \in \{i,i+1, \hdots, n\}$, the $(j-i+1)$-th element of $\I_{i-1}^o$ is inferior to $n+j$". If the hypothesis $H(i+1)$ is true, then we iterate the algorithm to $i+1$.
At the beginning, $\I_0^o$ is defined and $H(1)$ is obviously true so we can initiate the algorithm.
\begin{enumerate}
\item If $s^d_i$ is a down step in the context $(1)$ or $(2)$ of Definition \ref{definitiondellacstory}, let $(n_1,n_2) = \xi_i$. In particular, since $n_2 \leq k = j_i-i$ (see Remark \ref{relationijk}) and $j_i \leq n$, we have $1+n_2 \leq n-i+1$ so, from Hypothesis $H(i)$, we can consider the $(1+n_2)$-th element of $\I_{i-1}^o$, say, the integer $q$.
We insert the odd dot $e_{n+q}$ in the $j_i$-th column of $T$. From Hypothesis $H(i)$, the $(j_i-i+1)$-th element of $\I_{i-1}^o$ is inferior to $n+j_i$, and $1+n_2 \leq 1+k = j_i-i+1$. Consequently, the dot $e_{n+q}$ is between the lines $y = x$ and $y = x+n$.
Afterwards, we define $\I_i^o$ as the sequence $\I_{i-1}^o$ from which we have removed $q$ (by abusing the notation, we write $\I_i^o := \I_{i-1}^o \backslash \{q\}$).
Thus, the set $\I_i^o$ has size $n+1-(i+1)$. Also, if $j \in \{i+1, i+2, \hdots, n\}$, then following Hypothesis $H(i)$, the $(j-i)$-th element of $\I_{i-1}^o$ is inferior to $n+j-1$, so the $(j-(i+1)+1)$-th element of $\I_i^o$ is inferior to $n+j-1 < n+j$. Therefore, Hypothesis $H(i+1)$ is true and we can iterate the algorithm to $i+1$.
\item
If $s^d_i$ and $s^d_{i+1}$ are two consecutive down steps in the context $(3)$ of Definition \ref{definitiondellacstory}, let $(n_1,n_2) = \xi_{i+1}$. In particular $n_1 \leq n_2 \leq k-1 = j_i-i-1 \leq n-i-1$, so $1+n_1 < 2+n_2 \leq j_i-i+1$. Consequently, following Hypothesis $H(i)$, we can consider the $(1+n_1)$-th element of $\I_{i-1}^o$, say, the integer $q_1$, and the $(2+n_2)$-th element of $\I_{i-1}^o$, say, the integer $q_2 > q_1$.
We insert the two odd dots $e_{n+q_1}$ and $e_{n+q_2}$ in the $j$-th column of $T$. With precision, by the same argument as for $(a)$, those two dots are located between the lines $y=x$ and $y=x+n$.
Afterwards, we set $\I_{i+1}^o := \I_{i-1}^o \backslash \{q_1,q_2\}$.
Thus, the $\I_{i+1}^o$ has size $n - (i+2) +1$, and if $j \in \{i+2, i+3, \hdots, n\}$ then, by Hypothesis $H(i)$, the $(j-i-1)$-th element of $\I_{i-1}^o$ is inferior to $n+j-2$, so the $(j-(i+2)+1)$-th element of $\I_{i+1}^o$ is inferior to $n+j-2 < n+j$. Therefore, Hypothesis $H(i+2)$ is true and we can iterate the algorithm to $i+2$.
\end{enumerate}
\item \textbf{Insertion of the $n$ even dots $e_1,e_2, \hdots,e_n$.} Let $\I_0^e = (n, n-1, \hdots, 1)$. For $i=1$ to $n$, consider $j_i \in [n]$ such that the $(n+1-i)$-th up step $s^u_{n+1-i}$ of $\gamma$ is one of the two steps $(p_{2j_i-2},p_{2j_i-1})$ or $(p_{2j_i-1},p_{2j_i})$. If the set $\I_{i-1}^e \subset \I_0^e$ is defined, we denote by $H'(i)$ the hypothesis "$\I_{i-1}^e$ has size $n+1-i$ such that for all $j \in [n-i+1]$, the $(n-i+2-j)$-th element of $\I_{i-1}^o$ is greater than $j$". If Hypothesis $H'(i+1)$ is true, we iterate the algorithm to $i+1$.
In particular, the set $\I_0^e$ is defined and $H'(1)$ is true so we can initiate the algorithm.
\begin{enumerate}
\item If $s^u_{n+1-i}$ is an up step in the the context $(1)$ or $(2)$ of Definition \ref{definitiondellacstory}, then let $i_0 \in [n]$ such that
 $\{ (p_{2j_i-2},p_{2j_i-1}), (p_{2j_i-1},p_{2j_i}) \} = \{s^u_{n+1-i}, s^d_{i_0} \}$.
Let $(n_1,n_2) = \xi_{i_0}$. From Remark \ref{relationijk}, we have $1+n_1 \leq 1+k = n-i+2-j_i \leq n-i+1$ so, following Hypothesis $H'(i)$, we can consider the $(1+n_1)$-th element of $\I_{i-1}^e$, say, the integer $p$.
We insert the even dot $e_{p}$ in the $j_i$-th column of $T$. By Hypothesis $H'(i)$, the $(n-i+2-j_i)$-th element of $\I_{i-1}^e$ is greater than $j_i$, and $1+n_1 \leq 1+k = n-i-j_i+2$ so the dot $e_{p}$ is located between the lines $y = x$ and $y = x+n$.
Afterwards, we set $\I_i^e := \I_{i-1}^e \backslash \{p\}$.
The set $\I_i^e$ has size $n+1-(i+1)$. Also, if $j \in \{1, 2, \hdots, n+1-(i+1)\}$, then, by Hypothesis $H'(i)$, the $(n-i-j)$-th element of $\I_{i-1}^e$ is greater than $j+1$, so the $(n-(i+1)+1-j)$-th element of $\I_i^e$ is greater than $j+1 > j$.
Therefore, Hypothesis $H'(i+1)$ is true and we can iterate the algorithm to $i+1$.
\item If $s^u_{n+1-(i+1)}$ and $s^u_{n+1-i}$ are two consecutive up steps $(p_{2j_i-2},p_{2j_i-1})$ and $(p_{2j_i-1},p_{2j_i})$ from level $2k-2$ towards level $2k$ in $\gamma$, let $j_0 > j_i$ such that the two steps $(p_{2j_0-2},p_{2j_0-1})$ and $(p_{2j_0-1},p_{2j_0})$ are the next two consecutive down steps $s^d_{i_0}$ and $s^d_{i_0+1}$ from level $2k$ towards level $2k-2$ (see Figure \ref{samelevel}).
Let $(n_1,n_2) = \xi_{i_0}$. Being in the context $(3)$ of Definition \ref{definitiondellacstory}, we have $n_2 \leq n_1 \leq k-1 = n-i-j_0 \leq n-i-1$, hence $1+n_2 < 2+n_1 \leq n-i+1$. Consequently, by Hypothesis $H'(i)$, we can consider the $(1+n_2)$-th element of $\I_{i-1}^e$, say, the integer $p_1$, and the $(2+n_1)$-th element of $\I_{i-1}^e$, say, the integer $p_2 < p_1$. 
We insert the two even dots $e_{p_2}$ and $e_{p_1}$ in the $j_i$-th column of $T$. With precision, for the same argument as for $(a)$, those two dots are between the lines $y=x$ and $y=x+n$.
Afterwards, we set $\I_{i+1}^e := \I_{i-1}^e \backslash \{p_2,p_1\}$.
The set $\I_{i+1}^e$ has size $n - (i+2) +1$. Also, if $j \in \{1, 2, \hdots, n+1-(i+2)\}$, then by Hypothesis $H'(i)$, the $(n-i-j)$-th element of $\I_{i-1}^e$ is greater than $j+2$, so the $(n-(i+2)+2-j)$-th element of $\I_{i+1}^e$ is greater than $j+2 >j$.
Therefore, Hypothesis $H'(i+2)$ is true and we can iterate the algorithm to $i+2$.
\end{enumerate}
\end{enumerate}
By construction, it is clear that $\Psi(S) = T$ is a Dellac configuration.
\end{definition}

\begin{remark} \label{correspondancejetonpas}
Let $S = (\gamma, \xi) \in DH(n)$ and $C = \Psi(S) \in DC(n)$. For all $i \in [n]$, the $i$-th up step $s^u_i$ (resp. down step $s^d_i$) of $\gamma$ gives birth to the even dot $e_{p_C(i)}$ (resp. to the odd dot $e_{n+q_C(i)}$) (see Definition \ref{defparticulardots}).
\end{remark}

\begin{example}
If $S \in DH(6)$ is the Dellac history $\Phi(C)$ of Example \ref{exemplecalculPhiC}, we obtain $\Psi(S) = C$. 
\end{example}

\hspace*{-5.9mm} 
Following Remark \ref{correspondancejetonpas}, it is easy to prove the following lemma by induction on $i \in [n]$.

\begin{lemma} \label{lemaCevenbCoddedCod}
Let $S \in DH(n)$. We consider the two sequences $(\I_i^o)$ and $(\I_i^e)$ defined in the computation of $C = \Psi(S)$ (see Definition \ref{defPsiS}). Then for all $i \in [n]$,
the integer $q_C(i)$ is the $(1+r_C^o(e_{n+q_C(i)}))$-th element of the sequence $\I_{i-1}^o$, and the integer $p_C(n+1-i)$ is the $(1+l_C^e(e_{p_C(n+1-i)}))$-th element of the sequence $\I_{i-1}^e$.
\end{lemma}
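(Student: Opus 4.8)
The plan is to prove the statement by induction on $i \in [n]$, using Remark \ref{correspondancejetonpas} to keep exact track of which entries have been deleted from the auxiliary sequences after the first few steps. I would carry out the argument in full for the claim on $(\I_i^o)$ and $q_C$, and then only indicate the symmetric modifications needed for $(\I_i^e)$ and $p_C$ (the reverse traversal of the up steps, the decreasing sequence $\I_0^e=(n,n-1,\ldots,1)$, and the statistic $l_C^e$ replacing $r_C^o$).

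First I would show, by induction on $i$, that $\I_{i-1}^o$ is the increasing reordering of the set $[n]\setminus\{q_C(1),q_C(2),\ldots,q_C(i-1)\}$, with the convention that in the context $(3)$ of Definition \ref{definitiondellacstory}, where case $(1)(b)$ of Definition \ref{defPsiS} deletes two entries at once, the intermediate sequence $\I_i^o := \I_{i-1}^o\setminus\{q_1\}$ is understood. The base case is $\I_0^o=(1,2,\ldots,n)$. For the inductive step, Remark \ref{correspondancejetonpas} tells us that the $i$-th down step $s^d_i$ of $\gamma$ gives birth to the odd dot $e_{n+q_C(i)}$; hence the element removed from $\I_{i-1}^o$ while processing $s^d_i$ is exactly $q_C(i)$ (and, in case $(1)(b)$, the two elements removed at steps $i$ and $i+1$ are $q_C(i)=q_1$ and $q_C(i+1)=q_2$, with $q_1<q_2$).

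The core of the proof is then to identify the position of $q_C(i)$ in $\I_{i-1}^o$ with $1+r_C^o(e_{n+q_C(i)})$. Since $\I_{i-1}^o$ is increasing, that position equals $1+|A|$, where $A=\{q'\in[n] : q'<q_C(i)\text{ and }q'\notin\{q_C(1),\ldots,q_C(i-1)\}\}$. Here $q'<q_C(i)$ means that the odd dot $e_{n+q'}$ sits on a strictly lower row than $e_{n+q_C(i)}$, while $q'\notin\{q_C(1),\ldots,q_C(i-1)\}$ means, by the correspondence above, that $e_{n+q'}$ occurs strictly after $e_{n+q_C(i)}$ in the column-reading order $(e_{i_1(1)},e_{i_2(1)},\ldots,e_{i_1(n)},e_{i_2(n)})$ of Definition \ref{defparticulardots}. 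I would then observe that a lower odd dot occurring later in this order must lie in a strictly later column of $C$: a lower dot in an earlier column is read earlier, and a lower dot in the same column as $e_{n+q_C(i)}$ is the bottom dot $e_{i_1}$ of that column, hence is again read before the top dot. Consequently $A$ is precisely the set of odd dots $e_{n+q'}$ with $n<n+q'<n+q_C(i)$ lying strictly to the right of $e_{n+q_C(i)}$, i.e. forming an inversion with it, which is by definition the quantity counted by $r_C^o(e_{n+q_C(i)})$. Thus $|A|=r_C^o(e_{n+q_C(i)})$ and the asserted position follows.

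The step demanding the most care is the bookkeeping in case $(1)(b)$ / context $(3)$, where a single column carries two odd dots and the two indices $q_1<q_2$ are drawn from $\I_{i-1}^o$ at consecutive steps. I must check that the bottom dot $e_{n+q_1}$, which is read first, contributes no spurious inversion to the top dot $e_{n+q_2}$ — it does not, since dots in the same column never form an inversion — and that deleting $q_1$ before reading off $q_2$ lowers the index of $q_2$ by exactly one, so that its position in the intermediate sequence $\I_i^o$ becomes $1+r_C^o(e_{n+q_2})$, as required. Once the column-reading order is aligned with the deletion order this is routine, and the mirror-image computation for $\I^e$ — in which the reversed traversal together with the decreasing sequence turns \emph{lower and later} into \emph{higher and earlier}, thereby recovering $l_C^e$ — finishes the proof.
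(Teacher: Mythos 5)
Your proof is correct and takes exactly the approach the paper itself indicates: the paper omits the proof of this lemma entirely, saying only that it ``is easy to prove by induction on $i \in [n]$'' following Remark \ref{correspondancejetonpas}. Your induction — using that remark to identify $\I_{i-1}^o$ with the increasing enumeration of $[n]\setminus\{q_C(1),\ldots,q_C(i-1)\}$ (with the natural convention for the double-deletion case), and then converting the position of $q_C(i)$ in that sequence into $1+r_C^o(e_{n+q_C(i)})$ via the column-reading order, together with the mirrored argument for $\I_{i-1}^e$ and $l_C^e$ — supplies precisely the details the paper leaves to the reader.
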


\begin{proposition} \label{PhiandPsiinversemaps}
The maps $\Phi : DC(n) \rightarrow DH(n)$ and $\Psi : DH(n) \rightarrow DC(n)$ are inverse maps.
\end{proposition}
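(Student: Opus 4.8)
The plan is to prove that both composites $\Psi \circ \Phi$ and $\Phi \circ \Psi$ are the identity; since $\Phi$ and $\Psi$ go between the same pair of finite sets, I shall verify each composite separately rather than rely on a cardinality count. The backbone of both verifications is the pair of step--dot correspondences of Remarks \ref{equivalencepasjetons} and \ref{correspondancejetonpas}, together with the self-indexing property of Lemma \ref{lemaCevenbCoddedCod}. As a first reduction I would note that both composites preserve the underlying Dyck path: a Dellac configuration is entirely determined by the column of each of its $2n$ dots, and the column-parity pattern (two even dots, one of each parity, or two odd dots) is exactly what rules $(1)$--$(3)$ of Definition \ref{definitionphiC} read off to build $\gamma$, while conversely $\Psi$ inserts even dots only along up steps and odd dots only along down steps, so $\Psi(\gamma,\xi)$ has the column-parity pattern encoded by $\gamma$. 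Hence the Dyck path coordinate is fixed by both $\Psi\circ\Phi$ and $\Phi\circ\Psi$, and it remains only to match the dot positions, equivalently the weight data $\xi$.

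For $\Phi\circ\Psi=\mathrm{Id}_{DH(n)}$, I would fix $S=(\gamma,\xi)\in DH(n)$, set $C=\Psi(S)$ and $(\gamma,\xi')=\Phi(C)$. Consider the $i$-th down step in context $(1)$ or $(2)$, lying in the $j$-th column, and write $(n_1,n_2)=\xi_i$. By construction $\Psi$ places the odd dot of that column, namely $e_{n+q_C(i)}$ by Remark \ref{correspondancejetonpas}, at the $(1+n_2)$-th entry of $\I_{i-1}^o$; Lemma \ref{lemaCevenbCoddedCod} identifies the same integer $q_C(i)$ as the $(1+r_C^o(e_{n+q_C(i)}))$-th entry of $\I_{i-1}^o$. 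Since the entries of $\I_{i-1}^o$ are pairwise distinct, the two indices agree, giving $n_2=r_C^o(e_{i_2(j)})$; the symmetric argument with $\I^e$ and the even-dot half of Lemma \ref{lemaCevenbCoddedCod} gives $n_1=l_C^e(e_{i_1(j)})$. These are precisely the two coordinates that rule $(2)$ of Definition \ref{definitionphiC} assigns to $\xi'_i$, so $\xi'_i=\xi_i$. The doubled context $(3)$ is handled the same way, now reading off the $(1+n_1)$-th and $(2+n_2)$-th entries and invoking both halves of Lemma \ref{lemaCevenbCoddedCod}. Thus $\xi'=\xi$ and $\Phi(\Psi(S))=S$.

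For the reverse composite $\Psi\circ\Phi=\mathrm{Id}_{DC(n)}$, I would fix $C\in DC(n)$, put $(\gamma,\xi)=\Phi(C)$ and $C'=\Psi(\gamma,\xi)$. Here the definition of $\Phi$ already records $\xi_i=(l_C^e(e_{i_1(j)}),r_C^o(e_{i_2(j)}))$, so $\Psi$ is instructed to insert the relevant odd dot at the $(1+r_C^o(e_{i_2(j)}))$-th entry of $\I_{i-1}^o$. The point to establish, by induction on $i$ exactly as in the proof of Lemma \ref{lemaCevenbCoddedCod}, is that $\I_{i-1}^o$ is the increasing list of the odd-dot rows of $C$ not yet consumed by the first $i-1$ down steps; then its $(1+r_C^o(e_{i_2(j)}))$-th entry is $q_C(i)$, so $C'$ reproduces the same odd dot in the same column as $C$. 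The even dots are treated symmetrically through $\I^e$ and the $l_C^e$-statistic, yielding $C'=C$. Combining the two composites shows that $\Phi$ and $\Psi$ are mutually inverse.

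The delicate point throughout is the inductive bookkeeping of the auxiliary sequences $\I_i^o$ and $\I_i^e$: one must check that at each step these are exactly the still-available rows, in increasing (resp.\ decreasing) order, and that the strict versus non-strict inequalities separating contexts $(1)$ and $(2)$ --- together with the doubled index shifts $1+n_1$ and $2+n_2$ of context $(3)$ --- select the intended distinct entries. This is where I expect the main effort to lie, the remainder being a direct comparison of the two algorithms line by line.
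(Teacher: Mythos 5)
Your proposal is correct and follows essentially the same route as the paper: both composites are verified by matching dots to steps via Remarks \ref{equivalencepasjetons} and \ref{correspondancejetonpas}, and the row positions are recovered through the list-position identity of Lemma \ref{lemaCevenbCoddedCod}, established by induction on the index $i$ of the down (resp.\ up) step. The only difference is one of emphasis --- you give the detailed lemma-based argument for $\Phi\circ\Psi=\mathrm{Id}_{DH(n)}$ (where the lemma applies verbatim to $C=\Psi(S)$) and sketch the induction for $\Psi\circ\Phi=\mathrm{Id}_{DC(n)}$, whereas the paper declares the former easy and writes out the latter; the substance is identical.
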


\begin{proof} From Remarks \ref{equivalencepasjetons} and \ref{correspondancejetonpas},
it is easy to see that $\Phi \circ \Psi = Id_{DH(n)}$. The equality $\Psi \circ \Phi = Id_{DC(n)}$ is less straightforward.
Let $C \in DC(n)$ and $S = (\gamma,\xi) = \Phi(C) \in DH(n)$.
We are going to show, by induction on $i \in [n]$, that $q_{\Psi(S)}(i) = q_C(i)$ and $p_{\Psi(S)}(i) = p_C(i)$ for all $i$, hence $\Psi(S) = C$.
The two proofs of $q_{\Psi(S)}(i) = q_C(i)$ and $p_{\Psi(S)}(i) = p_C(i)$ respectively being independant and analogous, we only prove $q_{\Psi(S)}(i) = q_C(i)$ for all $i$. Let $i=1$. In the context $(1)(a)$ of Definition \ref{defPsiS}, from Remark \ref{equivalencepasjetons}, the first odd dot to be inserted is $e_{n+q_{\Psi(S)}(1)}$. Therefore, by definition, the integer $q_{\Psi(S)}(1)$ is the $(1+n_2)$-th element of $\I_0^o$ (\textit{i.e.}, we obtain $q_{\Psi(S)}(1)$ = $1+n_2$ where $(n_1,n_2) = \xi_1$. In this situation, since $S = \Phi(C)$, we know that $n_2 = r_C^o(e_{n+q_C(1)})$.
Consequently, from Lemma \ref{lemaCevenbCoddedCod}, we obtain $q_{\Psi(S)}(1) = 1+ r_C^o(e_{n+q_C(1)}) = q_C(1).$
The proof in the context $(1)(b)$ is analogous. Now let $i \in \{2, 3, \hdots, n\}$. Suppose that $q_{\Psi(S)}(k) = q_C(k)$ for all $k < i$.
In the context $(1)(a)$ of Definition \ref{defPsiS}, from Remark \ref{equivalencepasjetons}, the $i$-th odd dot to be inserted is $e_{n+q_{\Psi(S)}(i)}$. Therefore, by definition, if $\xi_i = (n_1,n_2)$, then $q_{\Psi(S)}$ is the $(1+n_2)$-th element of $\I_{i-1}^e = \J_{i-1}^e$. Since $S = \Phi(C)$, we know that $n_2 = r_C^o(e_{n+q_C(i)})$ so, from Lemma \ref{lemaCevenbCoddedCod}, we obtain $q_{\Psi(S)}(i) = q_C(i)$.
The proof in the context $(1)(b)$ is analogous.
\end{proof}
\\ \ \\
This puts an end to the proof of Theorem \ref{bijectiondellacstory}.
As an illustration of the entire paper, the table depicted in the next page (see Figure \ref{table}) explicits the bijections $\phi : DC(3) \rightarrow \D_4'$ and $\Phi : DC(3) \rightarrow DH(3)$.

\subsection*{Acknowledgements}
I thank Jiang Zeng for his comments and useful references.

\clearpage

\begin{figure}
\begin{center}
\begin{tabular}{c|c|c}
    \toprule
  $C \in DC(3)$ & 
   $\phi(C) \in \D_4'$ & 
    $\Phi(C) \in DH(3)$ \\
    \midrule
    \adjustimage{height=2.5cm,valign=m}{C0} &
    $41736285$ &
    \adjustimage{height=2.5cm,valign=m}{S0} \\
    \midrule
   \adjustimage{height=2.5cm,valign=m}{C11} &
   $41736582$ &
    \adjustimage{height=2.5cm,valign=m}{S11} \\
     \midrule
   \adjustimage{height=2.5cm,valign=m}{C12} &
   $71436285$ &
    \adjustimage{height=2.5cm,valign=m}{S12} \\
     \midrule
   \adjustimage{height=2.5cm,valign=m}{C21} &
   $71436582$ &
    \adjustimage{height=2.5cm,valign=m}{S21} \\
     \midrule
   \adjustimage{height=2.5cm,valign=m}{C22} &
   $51436287$ &
    \adjustimage{height=2.5cm,valign=m}{S22} \\
     \midrule
   \adjustimage{height=2.5cm,valign=m}{C23} &
   $21736584$ &
    \adjustimage{height=2.5cm,valign=m}{S23} \\
     \midrule
   \adjustimage{height=2.5cm,valign=m}{C3} &
   $21436587$ &
    \adjustimage{height=2.5cm,valign=m}{S3} \\
    \bottomrule
\end{tabular}
\end{center}
\caption{}
\label{table}
\end{figure}

\clearpage



\end{document}